\newcommand\numberthis{\addtocounter{equation}{1}\tag{\theequation}}
\let\oldequation\equation
\let\oldendequation\endequation
\journal{~}
\newtheorem{theorem}{Theorem}[section]
\newtheorem{proposition}{Proposition}[section]
\newtheorem{lemma}{Lemma}[section]
\newtheorem{corollary}{Corollary}[section]
\numberwithin{equation}{section} 
\begin{document}

\begin{frontmatter}

\title{Distribution of values of general Euler totient function} 
\author[A]{Debika Banerjee}
\ead{debika@iiitd.ac.in}
\author[A]{Bittu Chahal}
\ead{bittui@iiitd.ac.in}

\author[A]{Sneha Chaubey}
\ead{sneha@iiitd.ac.in}

\author[A]{Khyati Khurana}
\ead{khyatii@iiitd.ac.in}
\address[A]{Department of Mathematics,\\ IIIT-Delhi,\\ New Delhi - 110020, India \vspace{0.5 cm}}
\begin{abstract}
Let $\Phi_k(n)=|\{ (x_1, x_2, \cdots, x_k)\in \left(\mathbb{Z}/n\mathbb{Z}\right)^k; \ \gcd(x_1^2+x_2^2+ \cdots+ x_k^2, n)=1\}|$ be a general totient function introduced first by 
Cald\' {e}ron et. al. 
Motivated by the classical works of Schoenberg, Erd\H{o}s, Bateman and Diamond on the distribution of $\Phi_1(n)$, we prove results on the joint distribution of $\Phi_k(n)$ for any $k\ge 1$.
Additionally, we also exhibit the extremal order of $\Phi_k(n)$. 
\end{abstract}
\begin{keyword}
Euler totient function\sep general totient function\sep Perron's formula\sep Extremal order, Dirichlet Character.
\MSC[2020] 11N60, 11N64, 11M06. 
\end{keyword}
\end{frontmatter}
\section{Introduction}
An important function in the theory of numbers is the Euler totient function $\phi(n)$, which counts the number of integers not exceeding $n$ and relatively prime to $n$. Questions on the distribution of values of $\phi(n)$ have long been of interest, and several results exist in the literature. One of the most classical and earliest known results  in probabilistic number theory is due to Schoenberg \cite{S}, who studied the distribution of values of $\dfrac{\phi(n)}{n}$, and showed that there exists a continuous monotone function $f$ with $f(0)=0$ and $f(1)=1$ such that
\begin{align} \label{fresult}
\frac{1}{x}|\{ n \leq x; \frac{\phi(n)}{n} \leq \alpha\}| \rightarrow f(\alpha) \ \ 0\leq \alpha \leq 1; \ \mbox{as} \ x \rightarrow \infty.
\end{align}
The distribution function $f(1/\alpha)$ was investigated by Erd\H{o}s \cite{E1} with sharper error terms in \cite{W} and \cite{W1}. In particular, these authors studied the limiting function
\[\lim_{x\rightarrow\infty}\frac{1}{x}|\{n\le x: \frac{n}{\phi(n)}\ge t\}|.\] The question on distribution of $r$-th moments of $\dfrac{\phi(n)}{n}$ goes back to Chowla \cite{Ch} with improvements on the error term for special cases by Walfisz and others in \cite{Si, Wa}. Later in \cite{JL}, Liu studied the mean value estimates of the error term of $r$-th moments of $\dfrac{\phi(n)}{n}$. Further analysis related to the Riesz mean of $\left(\dfrac{\phi(n)}{n}\right)^{-1}$ was done in a series of papers \cite{SS1, SS2, SS3}, where the authors first conjectured on the error terms of $k$-th Riesz mean \cite{SS1} and later improved on their estimates of the error terms \cite{SS2, SS3}. 
Weighted means of $\left(\dfrac{\phi(n)}{n}\right)^{-1}$ was studied by Martin et.al. in \cite{MPS}, where the authors also proved the existence of the distribution function
\[\lim_{x\rightarrow\infty}\frac{1}{x}|\{n\le x: K(n)\frac{n}{\phi(n)}\le u\}|,\] for some explicit function $K(n)$. Note that the function $K(n)\dfrac{n}{\phi(n)}$ appears in the asymptotics of the average of the number of points on Elliptic curves.

Returning to \eqref{fresult}, using the existence of the distribution function $f(\alpha)$, Erd\H{o}s \cite{E} in 1945, showed that the number of integers $M(n)$ for which $\phi(m)\le n$ equals $cn+o(n).$ The constant $c=\dfrac{\zeta(2)\zeta(3) }{\zeta(6)}$ was evaluated by Dressler \cite{Dr}. Bateman derived an asymptotic formula \cite{B} given by
\[M(y)=\frac{\zeta(2)\zeta(3)}{\zeta(6)}y+O\left(y \  \text{exp}(-(1-\epsilon)(1/2\log y\log\log y)^{1/2})\right).\] 
The function $M(y)$ counts the number of points $(n,\phi(n))$ lying in the semi-infinite horizontal strip $\{(s,t) : 0<s<\infty, 0< t\le y\}.$ Later, Diamond \cite{D} looked at the question for rectangles. He studied the function $\psi(x,y)$ counting the number of points $(n,\phi(n))$ lying in rectangle $(0,x]\times(0,y]$ and proved that
\[\psi(x,y)=xg(y/x)+O\left(y\exp(-(c\log ey\log\log e^2y)^{1/2})\right),\]
where $g$ is a continuous increasing function on $(0,1)$ determined as a contour integral. 


\par 
Our aim in this paper is to study an analog distribution question as above for $\Phi_k(n)$. The function $\Phi_k(n)$ is the cardinality of the set of $k$-tuples \[S:=\{ (x_1, x_2, \cdots, x_k)\in \left(\mathbb{Z}/n\mathbb{Z}\right)^k; \ \gcd(x_1^2+x_2^2+ \cdots+ x_k^2, n)=1\}.\]
Sometimes, in investigating solutions of a Diophantine equation, it is important to study the set of non-solutions of the equation. The set $S$ is a subset of the set of non-solutions of the 
 Diophantine equation $x_1^2+x_2^2+ \cdots+ x_k^2=n$ and was first introduced in \cite{CG}.
The function 
\begin{align}\label{Genphi}
\Phi_k(n)=|\{ (x_1, x_2, \cdots, x_k)\in \left(\mathbb{Z}/n\mathbb{Z}\right)^k; \ \gcd(x_1^2+x_2^2+ \cdots+ x_k^2, n)=1\}|.
\end{align}
can also be viewed as a 
generalization of the Euler totient function. 
Analogs and generalizations of the Euler totient function have been studied before; see, for example, \cite{Kac} and \cite{Zh}, and the references in \cite{CG}. A famous generalization is Jordan's totient function, named after Camille Jordan. It is given by
\begin{align*}
J_k(n)&:=|\{ (a_1, a_2, \cdots, a_k); \ 1 \leq a_i  \leq n; \ \gcd(a_1, a_2, \cdots, a_k, n) = 1\}|\\&=n^{k} \prod_{p|n} \left(1-\frac{1}{p^k}\right).
\end{align*}
It is clear that $\Phi_1(n)=J_1(n)=\phi(n)$. Furthermore, for
$k = 2, 4$ and $8$, $\Phi_k(n)$ coincides with the number of units in the rings of Gaussian integers, quaternions, and octonions over $\mathbb{Z}/n\mathbb{Z}$ respectively.
 More precisely, $\Phi_2(n)$, denoted 
by GIphi$(n)$ \cite{C} in the literature, represents the number of Gaussian integers $\mathbb{Z}[i]$ in a reduced
residue system modulo $n$. It was shown in \cite{CG} that the function $\Phi_k(n)$ is multiplicative for every $k$ and has the following explicit formula in terms of the prime-power decomposition of $n$:
For every $n \in \mathbb{N}$,
\begin{align}
\Phi_k(n)=\begin{cases}
n^{k-1} \phi(n)&\quad   \text{if } k \text{ is odd}, \\
n^{k-1} \phi(n) \prod_{\substack{p|n\\p>2}}\  \left(1-\frac{(-1)^{\frac{k(p-1)}{4}}}{p^{\frac{k}{2}}}\right)& \quad \text{if } k \text{ is even}. 
\end{cases}
\end{align}
Moreover, $\Phi_k(n)$ can be rewritten as 
\begin{align}
\Phi_k(n)=\begin{cases}
n^{k-1} \phi(n)&  \quad \text{if } k\equiv 1 \pmod 2, \\
n^{k-1} \phi(n) \prod_{\substack{p|n\\p>2}}\  \left(1-\frac{1}{p^{\frac{k}{2}}}\right)& \quad \text{if  } k\equiv 0 \pmod 4 ,\\
 n^{k-1} \phi(n) \prod_{\substack{p|n\\p \equiv 1  \pmod  4}}\  \left(1-\frac{1}{p^{\frac{k}{2}}}\right) \prod_{\substack{p|n\\p \equiv -1  \pmod  4}}\  \left(1+\frac{1}{p^{\frac{k}{2}}}\right)& \quad \text{if } k\equiv 2\pmod 4.\\
\end{cases}
\end{align}
The authors also derived an asymptotic formula for the summatory function $\sum_{n \leq x} \Phi_k(n)$. 
\subsection{Main Results.}
For any real number $\beta$, we define the quantity
\begin{align}\label{Function}
\phi_{\Phi_k, \beta}(x, y)=|\{ n \leq x; \ \  \frac{\Phi_k(n)}{n^{\beta} }\leq y\}|.
\end{align}
Note that $\frac{1}{x}\phi_{\Phi_k, \beta}(x, y)$ is the ratio of integers $n\le x$ for which $(n,\Phi_k(n))$ lies below the curve $t=s^{\beta}y$ in the $s$-$t$ plane.
The above quantity can be rewritten as
\begin{align}\label{Function1}
\phi_{\Phi_k, \beta}(x, y)
= \begin{cases} 
|\{ n \leq x; \ \  \frac{\phi(n)}{n^{\delta} }\leq y\}| &\quad  \text{if } k \text{ is odd}, \\
|\{ n \leq x; \ \  \frac{\phi(n) \alpha_{k}(n) }{n^{\delta} }\leq y\}|&  \quad  \text{if } k  \text{ is even}. 
\end{cases}
\end{align}
where $\delta= \beta-(k-1)$, and $\alpha_{k}(n)$ is a multiplicative function given by
\begin{align} \label{olx}
\alpha_{k}( p^m)=\begin{cases}
1&\quad   \text{if }  p=2,\\
\left(1-\frac{(-1)^{\frac{k(p-1)}{4}}}{p^{\frac{k}{2}}}\right)&  \quad \text{otherwise.}
\end{cases}
\end{align}
for any $m \geq 1$. 
Let
\begin{align} \label{Rvalue}
  R_{k, \beta}(z)=\begin{cases}
   \prod_{p}\left(1-\frac{1}{p}+\frac{1}{p}\left(\frac{p}{p-1}\right)^z \right)&  \quad \text{if } k \text{ is odd},\\
   \prod_{p}\left(1-\frac{1}{p}+\frac{1}{p}\left(\frac{p}{p-1}\right)^z \alpha_k(p)^{-z}\right)&  \quad \text{if } k \text{ is even}.
  \end{cases}
 \end{align}
We use the notation $\alpha=\frac{y}{x}$ for the rest of the paper. The notation $x_{{\beta}, k}$ would mean a positive real number which may not be the same at each occurrence.
Similarly, $\varepsilon>0$ will stand for an arbitrarily small positive real number which may not be the same at each occurrence. 
We will use the value $\delta=\beta-(k-1)$. Our first main result is the following theorem:
\begin{theorem}\label{thm3}
For $0< \delta <1$ with $y= \frac{(\log( (1-\delta)^{-1}+\varepsilon))^{-4}}{c_k \kappa^4}x^{1-\delta}$, for any arbitrarily small $\varepsilon>0$ and $R_{k, \beta}(z)$ as defined in \eqref{Rvalue}, there exists a positive constant $p_k$ such that
\begin{align*}
\phi_{\Phi_k, \beta}(x, y)=R_{k, \beta} \left(\frac{1}{(1-\delta)}\right)y^{\frac{1}{(1-\delta)}}+O_\delta\left( y^{\frac{1}{(1-\delta)}}\exp\left( -\sqrt{p_k     \log y\log\log y}\right)\right),     
\end{align*}
for all $x\geq x_{{\beta}, k}$.
\end{theorem}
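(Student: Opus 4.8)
The plan is to discard the elementary factors of $\Phi_k$, rewrite the count as an unconstrained one governed by a single Dirichlet series, and then run a truncated Perron formula together with a contour shift into the classical zero-free region, in the spirit of Bateman \cite{B} and Diamond \cite{D}.

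\textit{Reduction.} By \eqref{Function1}, $\phi_{\Phi_k,\beta}(x,y)=|\{n\le x:\psi_k(n)\le y\}|$, where $\psi_k(n)=\phi(n)\alpha_k(n)/n^{\delta}$ and $\alpha_k\equiv1$ for $k$ odd. Put $t=\tfrac1{1-\delta}>1$ and $u=y^{t}=y^{1/(1-\delta)}$. Since $\psi_k$ is multiplicative with $\psi_k(p^m)=p^{m(1-\delta)}(1-\tfrac1p)\alpha_k(p)$ and $(1-\delta)t=1$, the function $\Psi_k:=\psi_k^{\,t}$ is multiplicative with $\Psi_k(p^m)=p^{m}\big((1-\tfrac1p)\alpha_k(p)\big)^{t}$, and $\psi_k(n)\le y\iff\Psi_k(n)\le u$; so it suffices to estimate $|\{n\le x:\Psi_k(n)\le u\}|$. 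Moreover $\Psi_k(n)=n\,(\phi(n)/n)^{t}\alpha_k(n)^{t}\gg_{k} n/(\log\log 3n)^{2t}$, using Mertens' lower bound for $\phi(n)/n$ and, when $k\equiv2\pmod4$, the analogous bound for $\alpha_k(n)$; hence every $n$ with $\Psi_k(n)\le u$ satisfies $n\ll_k u(\log\log u)^{2t}$, and the precise form of $y$ in the hypothesis (this is where $c_k$, $\kappa$ and the fourth powers are used) is exactly what forces $u(\log\log u)^{2t}\le x$ once $x\ge x_{\beta,k}$. So in the stated range the quantity equals $|\{n\ge1:\Psi_k(n)\le u\}|$.

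\textit{The Dirichlet series.} Expanding the Euler product and factoring out $\zeta$ gives
\[
Z(s):=\sum_{n\ge1}\Psi_k(n)^{-s}=\zeta(s)\,E_k(s),\qquad
E_k(s)=\prod_{p}\Big(1-p^{-s}+p^{-s}\big(\tfrac{p}{p-1}\big)^{ts}\alpha_k(p)^{-ts}\Big),
\]
and in particular $E_k(1)=R_{k,\beta}(t)=R_{k,\beta}\big(\tfrac1{1-\delta}\big)$ by \eqref{Rvalue}. A local expansion shows the $p$-th factor is $1+O_k\big(|s|\,p^{-\sigma-1}+p^{-\sigma-k/2}\big)$ (with $k/2\ge1$ in the even case), so $E_k$ continues to a holomorphic, non-vanishing function of controlled growth in a region $\sigma\ge1-c_k/\log(|\Im s|+2)$. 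Combined with the classical zero-free region and standard growth bounds for $\zeta$, this makes $Z(s)$ holomorphic there except for the simple pole at $s=1$ inherited from $\zeta$, with $\operatorname*{Res}_{s=1}\big(Z(s)u^{s}/s\big)=E_k(1)\,u=R_{k,\beta}\big(\tfrac1{1-\delta}\big)\,u$.

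\textit{Perron, the contour shift, and the main obstacle.} I would insert a truncated Perron/Mellin formula, $|\{n:\Psi_k(n)\le u\}|=\tfrac1{2\pi i}\int_{c-iT}^{c+iT}Z(s)\tfrac{u^{s}}{s}\,ds+(\text{error})$, and move the vertical segment to the left of $\sigma=1$ into the zero-free region, crossing $s=1$ and collecting the residue $R_{k,\beta}\big(\tfrac1{1-\delta}\big)u=R_{k,\beta}\big(\tfrac1{1-\delta}\big)y^{1/(1-\delta)}$ (the main term), while the shifted vertical and the horizontal segments, together with the truncation error, are controlled by the estimates above. Optimizing the height $T$ against the width $c_k/\log T$ of the zero-free region, exactly as in \cite{B,D}, produces an error $O_\delta\big(u\exp(-\sqrt{p_k\log u\,\log\log u})\big)$ for a constant $p_k>0$ depending only on $k$; since $u=y^{1/(1-\delta)}$ and $\log\log u=\log\log y+\log\tfrac1{1-\delta}\ge\log\log y$, this becomes $O_\delta\big(y^{1/(1-\delta)}\exp(-\sqrt{p_k\log y\,\log\log y})\big)$, which is the claim. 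The delicate point is this last step: extracting the refined $\sqrt{\log y\,\log\log y}$ saving rather than the cruder $\sqrt{\log y}$ of a naive shift requires the Perron balancing of Bateman and Diamond together with uniform control of $E_k(s)$ on the shifted contour; the reduction above, while it pins down the admissible range of $y$, is otherwise routine.
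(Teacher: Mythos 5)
Your plan is to collapse the two-dimensional count into a one-dimensional one: set $\Psi_k(n)=\psi_k(n)^{1/(1-\delta)}$ so that $\psi_k(n)\le y\iff\Psi_k(n)\le u:=y^{1/(1-\delta)}$, claim the constraint $n\le x$ is then automatic, and attack the unconstrained count $|\{n:\Psi_k(n)\le u\}|$ with a single truncated Perron formula applied to $Z(s)=\sum_n\Psi_k(n)^{-s}=\zeta(s)E_k(s)$ and a zero-free-region contour shift. This is a genuinely different route from the paper, which runs a \emph{nested} double Perron: Proposition \ref{Perron0} in the variable $z$ converts the constraint $\psi_k(n)\le y$ into an integral of $A_z(x)=\sum_{n\le x}\psi_k(n)^{-z}$, keeping the cutoff $n\le x$ alive, and Lemma \ref{M1} then estimates $A_z(x)$ by a second Perron in $s$ applied to $F_{k,\beta}(s,z)$, with Lemma \ref{Main00} finishing by a contour shift in $z$ past the pole at $z=1/(1-\delta)$.

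Unfortunately, the reduction that carries your whole argument is false, and the gap is fatal. The hypothesis $y=\frac{(\log((1-\delta)^{-1}+\varepsilon))^{-4}}{c_k\kappa^4}\,x^{1-\delta}$ yields $u=y^{1/(1-\delta)}=C\,x$ for a fixed positive constant $C=C(\delta,k,\varepsilon)$ (which can even exceed $1$ when $\delta$ is small). Your claim that the ``precise form of $y$\ldots is exactly what forces $u(\log\log u)^{2t}\le x$'' cannot hold: $u(\log\log u)^{2t}=Cx\,(\log\log x)^{2t}(1+o(1))$, and $(\log\log x)^{2t}\to\infty$, so this inequality eventually fails for every fixed $C>0$ as $x\to\infty$ --- precisely the regime $x\ge x_{\beta,k}$ in which the theorem must hold. (In the paper the constants $c_k$, $\kappa$ and the fourth power in the hypothesis come from the choice of the abscissa $d=\exp((\kappa(c_k\alpha x^\delta)^{1/4})^{-1})$ in Lemma \ref{Main00}, needed so that $d>1/(1-\delta)$; they have nothing to do with making the cutoff $n\le x$ redundant.) Consequently there \emph{are} integers $n>x$ with $\Psi_k(n)\le u$, and your unconstrained count overshoots the target by $|\{n>x:\Psi_k(n)\le u\}|$. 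Because $\Psi_k(n)/n$ has a Schoenberg-type limiting distribution that is positive on every interval $(0,C]$, this excess is of order $x$ --- the same order as the main term $R_{k,\beta}(\tfrac1{1-\delta})u$ --- so it cannot be absorbed into an error of size $u\exp(-\sqrt{p_k\log y\log\log y})$. The algebraic bookkeeping in your plan is sound ($E_k(s)=I_{k,\beta}(0,s/(1-\delta))$ or $G_{k,\beta}(0,s/(1-\delta))$, and $E_k(1)=R_{k,\beta}(\tfrac1{1-\delta})$ indeed), but that does not rescue the flawed reduction; some mechanism that actually enforces $n\le x$, such as the paper's inner Perron in $s$ producing $A_z(x)$, is indispensable here.
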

For other values of $y$, we obtain the following estimate.
\begin{theorem}\label{thm4}
For $0< \delta <1$ with $0< y \leq x$ and $ y \neq \frac{(\log ((1-\delta)^{-1}+\varepsilon))^{-4}}{c_k \kappa^4}x^{1-\delta}$, for any arbitrarily small $\varepsilon>0$, there exists a positive constant $p_k^{\prime}$ such that
\begin{align*}
\phi_{\Phi_k, \beta}(x, y)\ll_\delta x^{1-(1-\delta)\varepsilon}y^{\varepsilon} \exp\left( -\sqrt{p_k^{\prime}     \log x\log\log x}\right),     
\end{align*}
for all $x\geq x_{{\beta}, k}$.
\end{theorem}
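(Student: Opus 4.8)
The plan is to run the analytic argument behind Theorem~\ref{thm3} for a value of $y$ that is not the resonant one. First recast the count multiplicatively: by \eqref{Function1}, $\phi_{\Phi_k,\beta}(x,y)$ equals the number of $n\le x$ with $n^{1-\delta}u_k(n)\le y$, where $u_k(n):=(\phi(n)/n)\,\alpha_k(n)$ (with $\alpha_k\equiv 1$ for $k$ odd) is multiplicative, satisfies $0<u_k(n)\le 1$, and is bounded below by a fixed negative power of $\log\log n$; thus $h_k(n):=n^{1-\delta}u_k(n)\asymp n^{1-\delta}$ up to $\log\log$-factors, and only $n$ below $N(y)\asymp y^{1/(1-\delta)}(\log\log y)^{1/(1-\delta)}$ contribute. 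Detecting $h_k(n)\le y$ by Perron's formula and summing gives
\[
\phi_{\Phi_k,\beta}(x,y)=\frac{1}{2\pi i}\int_{c-iT}^{c+iT}\frac{y^{s}}{s}\Bigl(\sum_{n\le x}h_k(n)^{-s}\Bigr)ds+(\text{truncation error}),\qquad c>\tfrac{1}{1-\delta},
\]
and one treats $\sum_{n\le x}h_k(n)^{-s}$ by a second Perron step against the Dirichlet series
\[
D_k(s)=\sum_{n\ge 1}h_k(n)^{-s}=\zeta\bigl((1-\delta)s\bigr)\,E_k(s),\qquad E_k(s)=\prod_p\Bigl(1+\bigl(u_k(p)^{-s}-1\bigr)p^{-(1-\delta)s}\Bigr).
\]

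Second, one needs the analytic continuation and size of $D_k$. Because $\alpha_k(p)$ depends only on $p\bmod 4$ when $k\equiv 2\pmod 4$, one peels out of $E_k(s)$ the appropriate integer powers of $\zeta$ and of the Dirichlet $L$-function $L(\,\cdot\,,\chi_4)$, leaving a Dirichlet series that converges absolutely just to the left of $\Re s=\frac1{1-\delta}$; the elementary point is that splitting the defining prime sum at $p=|s|$ makes small primes contribute $O(\log\log|s|)$ and large primes $O(1)$ to $\log E_k(s)$, so $E_k$ grows no faster than a fixed power of $\log|s|$ on vertical lines. Using the classical zero-free regions for $\zeta$ and $L(\,\cdot\,,\chi_4)$, this shows $D_k$ continues to $\Re s>\frac1{1-\delta}-\frac{\kappa}{\log(|s|+2)}$, analytic there apart from a simple pole at $s=\frac1{1-\delta}$ of residue $\frac{R_{k,\beta}(1/(1-\delta))}{1-\delta}$, with $R_{k,\beta}$ exactly as in \eqref{Rvalue} (indeed $E_k(1/(1-\delta))=R_{k,\beta}(1/(1-\delta))$); this is where $R_{k,\beta}$ and the zero-free-region constant $\kappa$ enter. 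Moving the inner $w$-contour past $w=0$ produces a copy of $D_k(s)$ (to be absorbed into the error) and past $w=\frac1{1-\delta}-s$ produces the pole, whose outer integration contributes the main term $R_{k,\beta}\!\left(\tfrac1{1-\delta}\right)y^{1/(1-\delta)}$ when $y\le x^{1-\delta}$ (and a term of size $x$ otherwise); the further–shifted inner integral is $O(x^{-\eta})$ for some $\eta>0$.

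Third comes the estimation of the outer integral on the shifted line $\Re s=\frac1{1-\delta}-\frac{\kappa}{\log T}$ up to height $T$. The bounds above give a contribution of shape $x^{1-(1-\delta)\varepsilon}y^{\varepsilon}\exp\!\bigl(-\tfrac{c\log x}{\log T}\bigr)(\log T)^{A}$, to be balanced against the truncated-Perron error of shape $x^{1-(1-\delta)\varepsilon}y^{\varepsilon}T^{-\kappa'}$, the usual nuisance factor $\min\bigl(1,(T|\log(y/h_k(n))|)^{-1}\bigr)$ being handled by a dyadic split of $n$ about the locus $h_k(n)=y$. Choosing $\log T\asymp\sqrt{\log x/\log\log x}$ optimises the sum of the two to $x^{1-(1-\delta)\varepsilon}y^{\varepsilon}\exp\!\bigl(-\sqrt{p_k'\log x\log\log x}\bigr)$, which is the source of the $\log\log$ inside the square root. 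The only place Theorem~\ref{thm4} departs from Theorem~\ref{thm3} is that here $y\ne\frac{(\log((1-\delta)^{-1}+\varepsilon))^{-4}}{c_k\kappa^{4}}x^{1-\delta}$, so $(y/x^{1-\delta})^{s}$ is not in resonance with the pole and one cannot transfer the error onto the finer scale $y^{1/(1-\delta)}\exp\!\bigl(-\sqrt{p_k\log y\log\log y}\bigr)$ available there.

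The main obstacle I anticipate is the uniform growth control of $E_k(s)$ — equivalently of $L(\,\cdot\,,\chi_4)$ together with the peeled-off Euler remainder — on the broken contour: the estimates must be uniform in $s$, compatible with the width of the zero-free region, and precise enough that the two-parameter optimisation in $T$ and in the shift $\kappa/\log T$ yields the exponent $\sqrt{p_k'\log x\log\log x}$ rather than merely $\sqrt{\log x}$. A secondary technical point is the bookkeeping of the double Perron inversion and the uniform handling of the two regimes $N(y)\le x$ and $N(y)>x$.
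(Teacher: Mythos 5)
Your overall strategy coincides with the paper's: introduce the two‑variable Dirichlet series $F_{k,\beta}(s,z)=\sum_n n^{-s+\beta z}\Phi_k(n)^{-z}$, factor out $\zeta(s+(1-\delta)z)$ (your $D_k(s)=\zeta((1-\delta)s)E_k(s)$ is the specialization), do an inner Perron to truncate to $n\le x$ (Lemma~\ref{M1}) and an outer Perron to detect $\Phi_k(n)/n^{\beta}\le y$ (Proposition~\ref{Perron0}, Lemma~\ref{Main0}), then shift contours a width $\asymp\sqrt{\log\log x/\log x}$ and balance against the truncation error to get the shape $\exp(-\sqrt{\log x\log\log x})$.

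That said, the key point that actually makes Theorem~\ref{thm4} an \emph{upper bound} (rather than an asymptotic with main term $R_{k,\beta}(1/(1-\delta))\,y^{1/(1-\delta)}$) is exactly what your write‑up glosses over, and as written it would not give the stated bound. You shift the outer contour left to $\Re s=\frac{1}{1-\delta}-\frac{\kappa}{\log T}$ and say the residue contributes $R_{k,\beta}\left(\frac{1}{1-\delta}\right)y^{1/(1-\delta)}$ ``when $y\le x^{1-\delta}$.'' But for $y$ in the range $\frac{(\log((1-\delta)^{-1}+\varepsilon))^{-4}}{c_k\kappa^{4}}x^{1-\delta}<y\le x^{1-\delta}$ we have $y^{1/(1-\delta)}\asymp x$, which is \emph{not} $\ll x^{1-(1-\delta)\varepsilon}y^{\varepsilon}\exp(-\sqrt{p_k'\log x\log\log x})$, so including the residue contradicts the claimed bound. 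The paper's mechanism is different: after Lemma~\ref{Main0} replaces $A_z(x)$ by its explicit main term, the $z$-contour is moved \emph{right}, and by Lemma~\ref{bound13} the factor $R_{k,\beta}(z)$ on $\Re z=d$ grows like $\exp(d(\log c_k+4\log\log d+4\log(\kappa/e)))$. The contour can only be pushed as far right as $d\le\exp\bigl((\kappa(c_k\alpha x^{\delta})^{1/4})^{-1}\bigr)$, and for $y$ above the resonant value this bound on $d$ falls \emph{short} of $\frac{1}{1-\delta}+\varepsilon$: the pole is \emph{never} crossed and no residue term appears (this is Lemma~\ref{Main000}). Your left‑shift past the pole would force the residue into the answer and so cannot yield the upper bound in that range; you would need either the paper's right‑shift‑without‑crossing argument, or a separate argument absorbing the residue, and the latter is false in the indicated range of $y$. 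A further complication with the left shift is that the explicit form of $A_z(x)$ furnished by Lemma~\ref{M1} is only established for $0<(1-\delta)\Re z\le\tfrac12$, well to the left of $\Re z=\tfrac{1}{1-\delta}$, so the ingredient you would need on your shifted line is not available as stated.

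Two smaller points. First, the zero‑free regions for $\zeta$ and $L(\cdot,\chi_4)$ play no role here: $D_k$ and $F_{k,\beta}$ contain $\zeta$ only in the numerator, and the Euler remainder ($G_{k,\beta}$ or $I_{k,\beta}$, your $E_k$) already converges absolutely in the needed half‑plane; the paper needs only the convexity estimate \eqref{zetabounds} together with Lemmas~\ref{bound12}--\ref{bound14}. Likewise nothing needs to be ``peeled off'' from $E_k$: since $u_k(p)^{-s}-1=s/p+O(p^{-k/2})$, the Euler factors of $E_k(s)$ are $1+O(p^{-1-(1-\delta)\Re s})$, already absolutely convergent without extracting $\zeta$ or $L$ powers. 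Second, the pole of the inner integrand in $w$ is at $w=1-(1-\delta)s$, not at $w=\frac{1}{1-\delta}-s$; these agree only when $\delta=0$, and the distinction matters when you track how the residue depends on $s$ in the outer integral.
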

Note that for $0<\delta<1,$ if $y\ge x$, then the value of $\phi_{\Phi_k, \beta}(x, y)=\lfloor x\rfloor$.
\begin{theorem}\label{thm5}
For $\delta <0$ and $0<\alpha <1$, for any arbitrarily small $\varepsilon>0$, there exists a positive constant $q_k$ such that
\begin{align*}
\phi_{\Phi_k, \beta}(x, y)\ll_{\delta} x^{\frac{1}{2}+(1-\delta)\varepsilon} y^{\frac{1}{2(1-\delta)}-\varepsilon} \exp\left( -\sqrt{q_k\log x\log\log x}\right),
\end{align*}
for  $x\geq x_{{\beta}, k}$.
\end{theorem}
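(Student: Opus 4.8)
The plan is to treat $\phi_{\Phi_k,\beta}(x,y)$ via Perron's formula applied to a suitable Dirichlet series, in direct parallel with the treatment of Theorems \ref{thm3} and \ref{thm4}, but now exploiting the regime $\delta<0$. Recall from \eqref{Function1} that $\phi_{\Phi_k,\beta}(x,y)$ counts $n\le x$ with $\phi(n)\alpha_k(n)/n^{\delta}\le y$ (taking $\alpha_k\equiv 1$ when $k$ is odd). Since $\delta<0$, we have $n^{-\delta}=n^{|\delta|}$ growing, so the condition $\phi(n)\alpha_k(n)n^{|\delta|}\le y$ forces $n$ to be small relative to $y^{1/(1-\delta)}$; concretely, because $\phi(n)\alpha_k(n)\gg n^{1-\varepsilon}$ off a sparse set, the constraint essentially localizes $n$ to an interval around $y^{1/(1-\delta)}$, and one should first dispose of the genuinely small $n$ (say $n\le y^{1/(1-\delta)}x^{-\varepsilon'}$) trivially, since there are at most that many of them, which already contributes $\ll x^{1/2+(1-\delta)\varepsilon}y^{1/(2(1-\delta))-\varepsilon}$ after balancing — this is where the square-root shape of the exponents in the statement comes from.

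For the main range, I would write the counting function as a contour integral: letting $F(s)=\sum_{n}\big(\phi(n)\alpha_k(n)n^{|\delta|}\big)^{-s}$, or rather the associated Dirichlet series in the variable conjugate to $y$, Perron's formula gives
\begin{align*}
\phi_{\Phi_k,\beta}(x,y)=\frac{1}{2\pi i}\int_{(c)}F(s)\,\frac{y^{s}}{s}\,ds+(\text{error}),
\end{align*}
after also truncating the sum over $n$ at $x$ (this truncation is what introduces the dependence on $x$ rather than just $y$, hence the mixed $x$–$y$ bound). The Dirichlet series $F(s)$ factors, by multiplicativity of $\phi$ and $\alpha_k$, into an Euler product that, after pulling out an appropriate power of $\zeta$, converges in a half-plane and extends a bit to the left with a singularity governed by $\zeta(s(1-\delta))$-type behavior; the pole sits at $\mathrm{Re}(s)=1/(1-\delta)$, matching $R_{k,\beta}(1/(1-\delta))$ from \eqref{Rvalue}. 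For an upper bound (rather than an asymptotic), it suffices to shift the contour to a line $\mathrm{Re}(s)=1/(2(1-\delta))-\varepsilon$, i.e. \emph{halfway} to the abscissa, picking up no main term beyond an admissible size and estimating the shifted integral trivially: $|y^{s}|=y^{1/(2(1-\delta))-\varepsilon}$, while the surviving $n$-sum (truncated at $x$) contributes the factor $x^{1/2+(1-\delta)\varepsilon}$. The zero-free region of $\zeta$ (classical de la Vallée Poussin, as already used implicitly for the $\exp(-\sqrt{\log\cdot\log\log\cdot})$ savings in Theorems \ref{thm3}, \ref{thm4}) supplies the factor $\exp(-\sqrt{q_k\log x\log\log x})$.

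The main obstacle will be controlling the contribution of the exceptional integers $n$ for which $\phi(n)\alpha_k(n)$ is much smaller than $n$ — these are the $n$ with many small prime factors — since for such $n$ the inequality $\phi(n)\alpha_k(n)n^{|\delta|}\le y$ does not force $n$ to be comparable to $y^{1/(1-\delta)}$, and naively they could be as large as $x$. One handles this by the standard device (Erdős, Bateman, Diamond) of noting that $n/\phi(n)$ is large only on a set of density decaying like $\exp(-c\sqrt{\log\log n})$ or better, inserted via a dyadic decomposition on the size of $\prod_{p\mid n}(1-1/p)^{-1}$; this is precisely what generates the $\exp(-\sqrt{q_k\log x\log\log x})$ loss rather than a clean power saving, and it must be done uniformly in $\delta<0$, which is why the implied constant is allowed to depend on $\delta$. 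A secondary technical point is the uniformity of all estimates as $\alpha=y/x\to 0$ or $\to 1$ within $(0,1)$; since $\delta<0$ decouples the relevant scale $y^{1/(1-\delta)}$ from $x$, this is milder here than in Theorem \ref{thm4}, and the hypothesis $0<\alpha<1$ is used only to keep the Perron truncation error term under control.
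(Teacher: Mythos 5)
Your choice of abscissa $\Re(z)=\tfrac{1}{2(1-\delta)}-\varepsilon$ is the right one and matches the paper: Lemma \ref{Main0} (Case~2, Subcase~1) applies Proposition \ref{Perron0} with precisely this $b$, feeds in Lemma \ref{M1} to evaluate the truncated sum $\sum_{n\le x}(\Phi_k(n)/n^\beta)^{-z}$, and the resulting error $E_1\ll x^{1/2+(1-\delta)\varepsilon}y^{1/(2(1-\delta))-\varepsilon}\exp(-\sqrt{q_k\log x\log\log x})$ yields the theorem, after observing that for $\delta<0$ and $0<\alpha<1$ the residue $R_{k,\beta}(1/(1-\delta))\,y^{1/(1-\delta)}$ and the error $E_2$ from Lemma \ref{Main00} are both dominated by $E_1$.

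However, the mechanism you offer for the exponential saving is not the paper's and would not even recover the bound as stated. The de la Vall\'ee Poussin zero-free region leads, in the usual way, to an error factor of shape $\exp(-c\sqrt{\log x})$; this is \emph{larger} than the claimed $\exp(-\sqrt{q_k\log x\log\log x})$, so a zero-free-region argument falls short, and in fact the paper never invokes zero-free regions at all, here or in Theorems \ref{thm3} and \ref{thm4}. The actual source of the saving is the Bateman--Diamond balancing inside Lemma \ref{M1}: one applies Perron in the $s$-variable, shifts to $\Re(s)=c=1-(1-\delta)b-\sqrt{(\log\log x)/(2\log x)}$ so that $x^{c}$ already supplies the factor $\exp(-\sqrt{\tfrac12\log x\log\log x})$, controls $\zeta$ on that line by the elementary convexity estimate $\zeta(\sigma+it)\ll|t|^{(1-\sigma)/2}$ of \eqref{zetabounds}, controls the Euler factor by the Rankin-type bound $\exp(M_k\log|\eta|/\log\log|\eta|)$ of Lemma \ref{bound12} (which on this contour is only $\exp(O(\sqrt{\log x/\log\log x}))$ and is therefore swamped), and then picks the truncation height $T$ to balance the horizontal integrals against the Perron error. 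Relatedly, the Erd\H{o}s-style dyadic decomposition on $n/\phi(n)$ that you introduce to handle ``exceptional'' $n$ does not occur in the paper and is unnecessary: those integers are absorbed analytically by the Euler-product bounds of Lemmas \ref{bound12}--\ref{bound14}, and the whole count is handled in one pass by the two nested Perron integrals.
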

In particular, for $\delta=0$, i.e., $\beta=k-1$, we obtain
\begin{theorem}\label{thm6}
For  $0<\alpha <1$, there exists a positive constant $r_k$ such that
\begin{align*}
\phi_{\Phi_k, k-1}(x, y)= R_{k, k-1}(1)y+ O\left(y\exp\left( -\sqrt{r_k\log y\log\log y}\right)\right),
\end{align*}
for  $x\geq x_{{\beta}, k}$.
\end{theorem}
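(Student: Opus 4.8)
Theorem~\ref{thm6} is the boundary case $\delta=0$ of Theorem~\ref{thm3}, in which the pole at $s=1/(1-\delta)$ collapses to the familiar pole of $\zeta(s)$ at $s=1$; since Theorem~\ref{thm3} is stated only for $0<\delta<1$, the plan is to re-run its argument in this (and in fact cleanest) case, which is precisely a generalization of Bateman's theorem for $M(y)=\#\{n:\phi(n)\le y\}$. Write $h_k(n)=\Phi_k(n)/n^{k-1}$, a positive multiplicative function which by \eqref{Function1} equals $\phi(n)$ for $k$ odd and $\phi(n)\alpha_k(n)$ for $k$ even. First I would record the lower bound $h_k(n)\gg_k n(\log\log 3n)^{-2}$: this is Mertens' inequality $\phi(n)\gg n/\log\log 3n$ for $k$ odd, and for $k$ even it also uses that $\alpha_k(n)=\prod_{p\mid n,\,p>2}\alpha_k(p)$ stays $\gg_k(\log\log 3n)^{-1}$, since each $\alpha_k(p)\in\{1\pm p^{-k/2}\}$ with $\alpha_k(p)\ge 1-3^{-k/2}$ (and for $k\ge 4$ the product is bounded below by a constant). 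Consequently every $n$ with $h_k(n)\le y$ satisfies $n\ll_k y(\log\log 3y)^2$, so under the standing hypothesis (which, as in the $\delta=0$ specialization of Theorem~\ref{thm3}, keeps $y$ suitably below $x$) the constraint $n\le x$ is inactive and
\[
\phi_{\Phi_k,k-1}(x,y)=N_k(y):=\#\{n\ge 1:\ h_k(n)\le y\}.
\]

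Next I would pass to the generating Dirichlet series $D_k(s)=\sum_{n\ge 1}h_k(n)^{-s}$. Using $h_k(p^j)=p^{j-1}(p-1)\alpha_k(p)$ (with $\alpha_k\equiv 1$ for $k$ odd and $\alpha_k(2)=1$ always), multiplicativity gives
\[
D_k(s)=\prod_p\Bigl(1+\frac{\alpha_k(p)^{-s}(p-1)^{-s}}{1-p^{-s}}\Bigr)=\zeta(s)\,G_k(s),\qquad G_k(s):=\prod_p\bigl(1-p^{-s}+\alpha_k(p)^{-s}(p-1)^{-s}\bigr).
\]
Writing the $p$-th factor of $G_k$ as $1+p^{-s}\bigl((\alpha_k(p)(1-1/p))^{-s}-1\bigr)$ and noting that $\alpha_k(p)(1-1/p)=1-1/p+O_k(p^{-\min(1,k/2)})$ is bounded away from $0$, one obtains $\log|G_k(\sigma+it)|\ll_k\sum_{p\le|t|+2}p^{-\sigma}\ll\log\log(|t|+3)$ on any line $\sigma=1-c_1/\log(|t|+2)$; hence $G_k$ is holomorphic and of size $(\log(|t|+3))^{O_k(1)}$ in such a region, and $G_k(1)=\prod_p\bigl(1-1/p+\alpha_k(p)^{-1}/(p-1)\bigr)=R_{k,k-1}(1)$ by \eqref{Rvalue}. (For $k\equiv 2\pmod 4$, $\alpha_k(p)$ depends on $p\bmod 4$; one may keep the Euler product as is — it still converges well past $\Re s=1$ — or split off a factor of $L(\cdot,\chi_4)$, whence the relevance of Dirichlet characters.) The Perron/Mellin inversion formula then gives, with $c=1+1/\log y$ and a truncation height $T$,
\[
N_k(y)=\frac{1}{2\pi i}\int_{c-iT}^{c+iT}D_k(s)\,\frac{y^{s}}{s}\,ds+\mathcal{E}_{\mathrm{trunc}},
\]
where $\mathcal{E}_{\mathrm{trunc}}$ is the usual truncation error, controlled via $N_k(t)\ll_k t(\log\log 3t)^2$.

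Finally I would shift the contour to the boundary $\sigma=1-c_1/\log(|t|+2)$ of the classical de~la~Vall\'ee~Poussin zero-free region of $\zeta(s)$, crossing the simple pole at $s=1$; its residue equals $G_k(1)\cdot y=R_{k,k-1}(1)\,y$, the asserted main term. On the shifted contour and the joining horizontal segments I would bound $|D_k(s)|=|\zeta(s)|\,|G_k(s)|\ll(\log(|t|+3))^{O_k(1)}$ and use $y^{\sigma}\le y\exp(-c_1\log y/\log T)$; combined with $\mathcal{E}_{\mathrm{trunc}}\ll_k y(\log y)^{O(1)}/T$ and the optimal choice of $T$, exactly as in Bateman's analysis of $M(y)$, this yields $N_k(y)=R_{k,k-1}(1)\,y+O_k\bigl(y\exp(-\sqrt{r_k\log y\log\log y})\bigr)$. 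The step I expect to be the main obstacle is the analytic bookkeeping of $G_k$ — showing that it continues holomorphically with growth only polynomial in $\log|t|$ slightly to the left of $\Re s=1$, uniformly in $k$ and with the mod-$4$ splitting when $k\equiv 2\pmod 4$ — together with recovering Bateman's sharp error shape $\sqrt{\log y\log\log y}$ (rather than the cruder $\sqrt{\log y}$) from the precise interplay of the zero-free region, the bound on $G_k$, and the truncation error; extracting the main term itself is immediate once $D_k=\zeta\,G_k$ is in hand.
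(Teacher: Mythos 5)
Your route is genuinely different from the paper's. The paper proves Theorem~\ref{thm6} by specializing its general two-variable machinery to $\delta=0$: Lemma~\ref{M1} applies effective Perron in $s$ to $F_{k,\beta}(s,z)$, Lemma~\ref{Main0} then applies Perron in $z$ via Proposition~\ref{Perron0}, and Lemma~\ref{Main00} shifts the resulting $z$-contour past the pole at $z=1/(1-\delta)$; the final conversion of the error from $x$ to $y$ is handled by a citation of Diamond. You instead collapse everything to the single-variable Dirichlet series $D_k(s)=\sum_n h_k(n)^{-s}=\zeta(s)G_k(s)$ --- note $G_k(s)=I_{k,k-1}(0,s)$ (resp.\ $G_{k,k-1}(0,s)$ for $k$ odd) and $G_k(1)=R_{k,k-1}(1)$, so the residue matches --- and run a single Perron contour shift in the style of Bateman's analysis of $M(y)=\#\{m:\phi(m)\le y\}$. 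This is cleaner for $\delta=0$, and your polynomial-in-$\log|t|$ bound for $G_k$ in a narrow strip $\sigma\ge1-c/\log(|t|+2)$ is a different (and simpler) piece of bookkeeping than Lemma~\ref{bound12}. What the shortcut loses is generality: it does not extend to $\delta\ne0$, which is what the two-variable apparatus is for.

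One substantive point you flag should be stressed. Your identification $\phi_{\Phi_k,k-1}(x,y)=N_k(y)$ rests on $h_k(n)\gg_k n(\log\log 3n)^{-2}$ and is literally valid only for $y\le c_k\,x/(\log\log x)^2$; for larger $y$ there may be $n>x$ with $h_k(n)\le y$, so the two quantities need not coincide. The stated hypothesis ``$0<\alpha<1$'' is broader, and indeed the statement cannot hold verbatim for $\alpha$ bounded away from $0$: already for $k=1$, $R_{1,0}(1)=\zeta(2)\zeta(3)/\zeta(6)\approx1.94$, so for $\alpha>1/R_{1,0}(1)$ the claimed main term $R_{1,0}(1)y$ exceeds the trivial bound $\phi_{\Phi_1,0}(x,y)\le x$. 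The paper's own proof has the identical hidden restriction: at $\delta=0$ the error $E_2$ of Lemma~\ref{Main00} is $\ll x\exp\left(-c\exp\left(\kappa^{-1}(c_k\alpha)^{-1/4}\right)\right)$, which is $o(x)$ only as $\alpha\to0$, and the display \eqref{eqthm6} silently drops it. So your proposal and the paper's argument establish the result in the same implicitly restricted small-$\alpha$ regime; modulo that, your outline is correct and amounts to a direct generalization of Bateman's theorem.
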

On substituting $k=1$ in the above theorem, we recover Diamond's \cite{D} result.
\begin{corollary}\label{cor1}
For $\beta=0$, and $0<\alpha<1$, we have
\[\phi_{\Phi_1, 0}(x, y)=\frac{\zeta(2)\zeta(3)}{\zeta(6)}y+O\left(y\exp\left( -\sqrt{c\log y\log\log y}\right)\right).\]
\end{corollary}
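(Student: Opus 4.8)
The plan is to derive Corollary~\ref{cor1} as a direct specialization of Theorem~\ref{thm6} to $k=1$, $\beta=0$; the only non-formal step is the evaluation of the Euler product $R_{1,0}(1)$.

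First I would record what the objects become when $k=1$. Since $1$ is odd, $\Phi_1(n)=n^{1-1}\phi(n)=\phi(n)$, and with $\beta=0$ we have $\delta=\beta-(k-1)=0$ and $n^{\beta}=1$; hence by \eqref{Function} and \eqref{Function1},
\begin{equation*}
\phi_{\Phi_1,0}(x,y)=\big|\{\,n\le x:\ \phi(n)\le y\,\}\big|,
\end{equation*}
which is exactly the counting function of lattice points $(n,\phi(n))$ in the rectangle $(0,x]\times(0,y]$ studied by Diamond~\cite{D}. Applying Theorem~\ref{thm6} with $k=1$ (and writing $c$ for the constant $r_1$) gives, for $0<\alpha<1$ and $x\ge x_{\beta,k}$,
\begin{equation*}
\phi_{\Phi_1,0}(x,y)=R_{1,0}(1)\,y+O\!\left(y\exp\!\left(-\sqrt{c\log y\log\log y}\right)\right),
\end{equation*}
so it remains only to identify $R_{1,0}(1)$ with $\zeta(2)\zeta(3)/\zeta(6)$.

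Next I would evaluate the Euler product. Since $k=1$ is odd, the definition \eqref{Rvalue} at $z=1$ gives
\begin{equation*}
R_{1,0}(1)=\prod_{p}\left(1-\frac1p+\frac1p\cdot\frac{p}{p-1}\right)=\prod_{p}\frac{1-p^{-1}+p^{-2}}{1-p^{-1}}.
\end{equation*}
Using the sum-of-cubes factorization $(1+p^{-1})(1-p^{-1}+p^{-2})=1+p^{-3}$ together with $1+p^{-3}=(1-p^{-6})/(1-p^{-3})$, each local factor rewrites as
\begin{equation*}
\frac{1-p^{-1}+p^{-2}}{1-p^{-1}}=\frac{1+p^{-3}}{1-p^{-2}}=\frac{1-p^{-6}}{(1-p^{-2})(1-p^{-3})},
\end{equation*}
and taking the product over all primes and recognizing the three Euler products yields $R_{1,0}(1)=\zeta(2)\zeta(3)/\zeta(6)$. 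Substituting this into the displayed asymptotic completes the proof and recovers Diamond's constant; the bound $\log y\asymp\log x$ for $y=\alpha x$ with $0<\alpha<1$ shows the error term has the stated shape.

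There is essentially no genuine obstacle in this corollary: all of the analytic difficulty is already contained in Theorem~\ref{thm6}, and the remaining work is the bookkeeping identity for $R_{1,0}(1)$, which I would present exactly as above so that the agreement with \cite{D} is transparent.
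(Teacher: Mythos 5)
Your proposal is correct and follows exactly the paper's route: specialize Theorem~\ref{thm6} to $k=1$, $\beta=0$ (so $\delta=0$) and identify $R_{1,0}(1)=\zeta(2)\zeta(3)/\zeta(6)$. The paper merely asserts this last identity, whereas you supply the elementary Euler-product manipulation; the approaches are otherwise identical.
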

The only case left to consider is when $\delta<0$ and $\alpha\ge 1$.
\begin{theorem} \label{thm7}
For $\delta <0$ and $\alpha\ge 1$, we have
\begin{align*}
\phi_{\Phi_k, \beta}(x, y)= R_{k, \beta} \left(\frac{1}{(1-\delta)}\right)y^{\frac{1}{(1-\delta)}}   +O_\delta( x \alpha^{\frac{1}{\log x}}     (\log x \log\log x )^{\frac{1}{2}} ),   
\end{align*}
whenever $x (\log x \log\log x )^{\frac{1}{2}} < e^{1-\delta}y^{\frac{1}{1-\delta}-\frac{1}{\log x}}$, otherwise
\begin{align*}
\phi_{\Phi_k, \  \beta}(x, y)      \ll_{\delta}  x
\end{align*}
for  all $x\geq x_{{\beta}, k}$.
\end{theorem}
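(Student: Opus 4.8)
The plan is to pass to a single Dirichlet series, evaluate it by a contour integral exactly as in the proof of Theorem~\ref{thm6}, and then handle the extra tail created by the condition $\alpha\ge1$ by a Rankin-type estimate. By \eqref{Function1} it suffices to estimate $N(x,y):=\#\{n\le x:\ m(n)\le y\}$, where $m(n)=\phi(n)n^{-\delta}$ for $k$ odd, $m(n)=\phi(n)\alpha_k(n)n^{-\delta}$ for $k$ even, and $\delta=\beta-(k-1)<0$, so $1-\delta>1$. Since $\phi(n)\gg n/\log\log n$ and, for even $k$, $\alpha_k(n)\gg(\log\log n)^{-O(1)}$, one has $m(n)\gg n^{1-\delta}(\log\log n)^{-O(1)}$, so for fixed $y$ only finitely many $n$ satisfy $m(n)\le y$ and the complete Dirichlet series $D_{k,\beta}(s):=\sum_{n\ge1}m(n)^{-s}$ converges absolutely for $\Re s>\tfrac{1}{1-\delta}$. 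As $m$ is multiplicative with $m(p^{j})=p^{\,j-1}(p-1)\alpha_k(p)p^{-j\delta}$ for $j\ge1$ (with $\alpha_k\equiv1$ when $k$ is odd), the Euler product collapses to
\begin{align*}
D_{k,\beta}(s)=\zeta\big((1-\delta)s\big)\,G_{k,\beta}(s),\qquad G_{k,\beta}(s)=\prod_{p}\Big(1-p^{-(1-\delta)s}+\big(\tfrac{p}{p-1}\big)^{s}\alpha_k(p)^{-s}p^{-(1-\delta)s}\Big),
\end{align*}
where expanding the logarithm of a typical factor shows that $G_{k,\beta}$ is holomorphic and bounded on $\Re s\ge\varepsilon$ for every $\varepsilon>0$, with $G_{k,\beta}\!\big(\tfrac{1}{1-\delta}\big)=R_{k,\beta}\!\big(\tfrac{1}{1-\delta}\big)$ by \eqref{Rvalue}.

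Write $N(x,y)=N_\infty(y)-T(x,y)$ with $N_\infty(y)=\#\{n:\ m(n)\le y\}$ and $T(x,y)=\#\{n>x:\ m(n)\le y\}$. For $N_\infty(y)$ I would reproduce the proof of Theorem~\ref{thm6}: by an effective Perron formula $N_\infty(y)=\frac{1}{2\pi i}\int_{(c)}\frac{y^{s}}{s}D_{k,\beta}(s)\,ds+(\text{error})$ with $c=\tfrac{1}{1-\delta}+o(1)$, then move the contour to the left. The only singularity crossed is the simple pole of $\zeta((1-\delta)s)$ at $s=\tfrac{1}{1-\delta}$, contributing $\operatorname{Res}_{s=1/(1-\delta)}\frac{y^{s}}{s}D_{k,\beta}(s)=R_{k,\beta}\!\big(\tfrac{1}{1-\delta}\big)y^{1/(1-\delta)}$; pushing the remainder of the contour into the classical zero-free region of $\zeta$ (that of $s\mapsto\zeta((1-\delta)s)$ has the same de~la~Vallée~Poussin shape for $\delta$ in a fixed compact set) bounds the residual integral by $O\big(y^{1/(1-\delta)}\exp(-c_k\sqrt{\log y\log\log y})\big)$, which is $o$ of the error claimed in the theorem.

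It remains to bound $T(x,y)$, which is where $\alpha\ge1$ matters. By Rankin's trick, for any $\sigma>\tfrac{1}{1-\delta}$,
\begin{align*}
T(x,y)\le\sum_{n>x}\Big(\frac{y}{m(n)}\Big)^{\!\sigma}=y^{\sigma}\sum_{n>x}m(n)^{-\sigma}\ll y^{\sigma}\,\frac{x^{\,1-(1-\delta)\sigma}(\log\log x)^{O(\sigma)}}{(1-\delta)\sigma-1},
\end{align*}
the last step by partial summation from $m(n)\gg n^{1-\delta}(\log\log n)^{-O(1)}$. Taking $\sigma=\tfrac{1}{1-\delta}\big(1+\tfrac{1}{\log x}\big)$ gives $(1-\delta)\sigma-1=\tfrac{1}{\log x}$, $x^{1-(1-\delta)\sigma}=e^{-1}$, $(\log\log x)^{O(\sigma)}\ll(\log\log x)^{O(1)}$ and $y^{\sigma}=y^{1/(1-\delta)}y^{1/((1-\delta)\log x)}$; since $\delta<0$ we have $y^{1/((1-\delta)\log x)}\le y^{1/\log x}=e\,\alpha^{1/\log x}$, so $T(x,y)\ll y^{1/(1-\delta)}\alpha^{1/\log x}(\log x\log\log x)^{O(1)}$. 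Combined with the trivial bound $\phi_{\Phi_k,\beta}(x,y)\le\lfloor x\rfloor$, this produces the stated dichotomy: when the hypothesis on $y$ holds the bound for $T(x,y)$ simplifies to $x\alpha^{1/\log x}(\log x\log\log x)^{1/2}$ and one obtains the asymptotic with the claimed error, while in the complementary range $\phi_{\Phi_k,\beta}(x,y)\le\lfloor x\rfloor\ll_\delta x$ is all one needs.

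The delicate step is the tail bound for $T(x,y)$: one has to pin down the exact power of $\log\log x$ coming from $\sum_{n>x}m(n)^{-\sigma}$ — equivalently from the mean value of $(n/\phi(n))^{\sigma}$, and for even $k$ also of $\alpha_k(n)^{-\sigma}$, near $\sigma=\tfrac{1}{1-\delta}$ — and to optimize the free exponent so that everything telescopes into the shape $x\alpha^{1/\log x}(\log x\log\log x)^{1/2}$, in agreement with where the case division is drawn. The evaluation of $N_\infty(y)$, by contrast, is a line-for-line repetition of the proof of Theorem~\ref{thm6} and introduces nothing new.
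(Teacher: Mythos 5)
Your decomposition $N(x,y)=N_\infty(y)-T(x,y)$, with $N_\infty(y)=\#\{n:m(n)\le y\}$ computed by a one-variable Perron argument and $T(x,y)=\#\{n>x:m(n)\le y\}$ controlled by Rankin's trick, is a genuinely different route from the paper's. Unfortunately the Rankin bound on $T(x,y)$ does not close the argument, and the paper's two-variable contour approach (Lemmas \ref{Main0} and \ref{Main00}) is needed precisely to avoid this difficulty.

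The gap is in the passage from $T(x,y)\ll y^{1/(1-\delta)}\alpha^{1/\log x}(\log x\log\log x)^{O(1)}$ to the claimed error $x\alpha^{1/\log x}(\log x\log\log x)^{1/2}$. The hypothesis of Theorem~\ref{thm7}, rewritten using $y^{-1/\log x}=e^{-1}\alpha^{-1/\log x}$, is equivalent to
\[
y^{1/(1-\delta)}\;>\;e^{\delta}\,\alpha^{1/\log x}\,x\,(\log x\log\log x)^{1/2},
\]
i.e.\ $y^{1/(1-\delta)}$ is \emph{larger} than $x\alpha^{1/\log x}(\log x\log\log x)^{1/2}$, not smaller. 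Therefore the bound you obtained for $T(x,y)$ is of the same order as (in fact larger than) the main term $R_{k,\beta}(1/(1-\delta))\,y^{1/(1-\delta)}$ itself, and your claim that ``the bound for $T(x,y)$ simplifies to $x\alpha^{1/\log x}(\log x\log\log x)^{1/2}$'' has the inequality backwards. Consequently $N(x,y)=N_\infty(y)-T(x,y)$ gives you no information beyond $N(x,y)\le N_\infty(y)$, and the asymptotic of the theorem is not recovered. This is structurally unavoidable for a Rankin-type upper bound: $T(x,y)$ can be as large as $N_\infty(y)\asymp y^{1/(1-\delta)}$, and the only way to show it is as small as $x\cdot(\text{slowly varying})$ when $y^{1/(1-\delta)}$ exceeds that quantity is to exploit the cancellation between the two constraints $n\le x$ and $m(n)\le y$ jointly. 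This is exactly what the paper does: it writes $\phi_{\Phi_k,\beta}(x,y)$ by Proposition~\ref{Perron0} as a $z$-integral of the truncated sum $\sum_{n\le x}(\Phi_k(n)/n^\beta)^{-z}$, evaluates that inner sum by a second (nested) Perron argument (Lemma~\ref{M1}), and only then shifts the $z$-contour (Lemma~\ref{Main00}) to pick up the residue at $z=1/(1-\delta)$. The error $E_1\ll e^\delta x\alpha^{1/\log x}(\log x\log\log x)^{1/2}$ in Lemma~\ref{Main0} comes from the $n\le x$ truncation inside the $z$-integral, which you cannot see after splitting off the tail $T(x,y)$.

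A secondary inaccuracy: for even $k$ the lower bound $\alpha_k(n)\gg(\log\log n)^{-O(1)}$ is false. Since $\alpha_k(p^m)=\alpha_k(p)\ge 1-2^{-k/2}$ for all $m\ge1$, one has $\alpha_k(n)\ge(1-2^{-k/2})^{\omega(n)}$, which for $n$ with many prime factors is only $\gg n^{-c/\log\log n}$, so $m(n)\gg n^{1-\delta-o(1)}$ rather than $n^{1-\delta}/(\log\log n)^{O(1)}$. This does not affect the abscissa of convergence of $D_{k,\beta}(s)$, but it does mean the pointwise estimate you cited for the tail sum $\sum_{n>x}m(n)^{-\sigma}$ needs to be replaced by a mean-value (or Euler-product) argument; and in any case, as explained above, even the sharpest such estimate leaves $T(x,y)$ too large by a factor comparable to $y^{1/(1-\delta)}/\bigl(x(\log x\log\log x)^{1/2}\bigr)$.
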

\subsection{Remarks on Proofs.}
The proofs primarily involve three steps. First, 
we study a two-variable analytic function $F_{k,\beta}(s,z)=\sum_{n\ge 1}n^{-s+\beta z}{\Phi_k(n)}^{-z}$, compute its Euler product representation which differs for $k$ odd or even, and obtain bounds for $\frac{F_{k,\beta}(s,z)}{\zeta(s+z-\delta z)}$. The next step is essentially the key step devoted to establishing an asymptotic formula for the average of the coefficient of  $F_{k,\beta}(s,z)$. A careful application of effective Perron's formula and bounds on the zeta function, along with estimates obtained in Step 1, help to achieve this. This is also where the method of proof differs from the method used by Diamond \cite{D} for his case of the usual Euler totient function. Diamond used the following integral identity
 \[ \int_1^{\alpha x}\Phi(x,u)\frac{du}{u}=\frac{x}{2\pi i} \int_\Gamma \prod(1-z,z)\frac{\alpha^{z} }{z^2(1-z)}\,dz+ O(x\ \exp( -\sqrt{c^\prime\log x\log\log x})), \numberthis\label{integration}\]
thereafter using the method of differencing arrived at his distribution result. 
We do not establish analogous relation \eqref{integration} for the general totient function. Instead, using our asymptotic formula and the summation formula \[|h(y)-\frac{1}{2\pi i}\int_{a-i T}^{a+i T^\prime}\frac{y^z}{z} dz|\ll\frac{y^a}{|\log y|}\left(\frac{1}{T}+\frac{1}{T^\prime}\right)  \quad(y\ne1), \numberthis\label{result}\] where 
$h(y)=1, 1/2$ or $0$ depending on $y> 1, y=1,$ and $0<y<1$, respectively, 
  we arrive at our results. The above identity enables us to avoid using differencing. In the final step, we examine the error terms obtained in Step 2 for different intervals of $\delta$ and $\alpha$, and an application of Perron's (Proposition \ref{Perron0}) yields the main theorems.
  \subsection{Results on the extremal order.}
We end this article by stating results on the maximal and minimal orders of $\Phi_k(n)$. 
For an arithmetic function $f$ and a non-decreasing function $g$ which is positive for all $x\ge x_0$ for some $x_0$, we say that $g$ is a maximal order (resp. a minimal order) for $f$ if we have
\[\lim\sup_{n\rightarrow\infty}\frac{f(n)}{g(n)}=1\quad \left(\text{resp. }\lim\inf_{n\rightarrow\infty}\frac{f(n)}{g(n)}=1\right).\]
Note that the maximal order of $\phi(n) $ is $n$ and the minimal order is $\dfrac{e^{-\gamma} n}{\log\log n}$ \cite[page 115]{T}.
\begin{theorem}\label{kodd}
For any $k\ge 1$, the maximal order of $\Phi_k(n) $ is
$n^k$. 
\end{theorem}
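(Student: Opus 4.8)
The plan is to pair the obvious upper bound with an explicit lower bound attained along the primes. Since $\Phi_k(n)$ is by definition the number of $k$-tuples lying in a subset of $\left(\mathbb{Z}/n\mathbb{Z}\right)^k$, one trivially has $\Phi_k(n)\le n^k$ for every $n\in\mathbb{N}$, whence $\limsup_{n\to\infty}\Phi_k(n)/n^k\le 1$, with no dependence on the parity of $k$. (The same bound may alternatively be read off the explicit formula of~\cite{CG}, each of whose local factors is at most $1$; the least obvious instance is $k\equiv 2\pmod 4$ with $p\equiv -1\pmod 4$, where the local factor equals $(1-1/p)(1+p^{-k/2})$, and this is still $<1$ because $k\ge 2$ forces $p-1<p^{k/2}$.)

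For the matching lower bound I would let $n$ run through the primes. If $k$ is odd then $\Phi_k(p)=p^{k-1}\phi(p)=p^{k-1}(p-1)$, so $\Phi_k(p)/p^k=1-1/p\to 1$ as $p\to\infty$; this is nothing but the classical fact that $\phi$ has maximal order $n$, since for odd $k$ one has $\Phi_k(n)/n^k=\phi(n)/n$ identically. If $k$ is even then $\Phi_k(p)=p^{k-1}(p-1)\,\alpha_k(p)$ with $\alpha_k(p)=1-(-1)^{k(p-1)/4}p^{-k/2}$, so $|\alpha_k(p)-1|=p^{-k/2}$ and therefore $(1-1/p)(1-p^{-k/2})\le \Phi_k(p)/p^k\le (1-1/p)(1+p^{-k/2})$, both of which tend to $1$. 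In either case $\limsup_{n\to\infty}\Phi_k(n)/n^k\ge\lim_{p\to\infty}\Phi_k(p)/p^k=1$. Combining with the upper bound gives $\limsup_{n\to\infty}\Phi_k(n)/n^k=1$, and since $n\mapsto n^k$ is positive and non-decreasing, it is by definition a maximal order of $\Phi_k$.

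There is no genuinely difficult step: the theorem is a short deduction from the explicit product formula, and the only point needing a moment's care is the even-$k$ case, where the sign of $\alpha_k(p)$ depends on the residue of $p$ modulo $4$ when $k\equiv 2\pmod 4$. That sign is irrelevant here, since only the crude estimate $|\alpha_k(p)-1|=p^{-k/2}\to 0$ is used, and the exceptional prime $p=2$ never enters because the extremal sequence consists of large primes. One could just as well take $n$ to be a power of a single large prime, since $\Phi_k(n)/n^k=\prod_{p\mid n}(1-1/p)\alpha_k^{\ast}(p)$ depends only on the radical of $n$ (with $\alpha_k^{\ast}(p)=1$ for odd $k$ or $p=2$, and $\alpha_k^{\ast}(p)=\alpha_k(p)$ otherwise); taking $n=p$ is merely the most economical choice.
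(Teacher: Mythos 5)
Your proof is correct and follows essentially the same route as the paper: a trivial upper bound $\Phi_k(n)\le n^k$ combined with the observation that $\Phi_k(p)/p^k\to 1$ along primes (and for odd $k$ this reduces to the classical maximal order of $\phi$). You simply spell out a few details the paper leaves implicit, such as verifying the local factor is $<1$ when $k\equiv 2\pmod 4$ and $p\equiv-1\pmod 4$.
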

The minimal order of $\Phi_k(n)$ depends on the parity of $k$ and is given by
\begin{theorem}\label{keven}The minimal order of $\Phi_k(n)$ is
\[ 
\begin{cases}
    \ \frac{e^{-\gamma}n^k}{\log\log n}, &   \text{if } k\equiv 1\pmod 2, \\
    \left( \frac{2^{\frac{k}{2}}     }{2^{\frac{k}{2}}-1}\right)\zeta\left(\frac{k}{2}\right)^{-1}  \frac{ e^{-\gamma}n^k}{\log\log n} &         \text{if } k\equiv 0\pmod 4,\\
    \  L\left(\frac{k}{2}, \chi_1\right)^{-1} \frac{e^{-\gamma}n^k}{\log\log n} &           \text{if } k\equiv 2\pmod 4.
\end{cases}
\]
\end{theorem}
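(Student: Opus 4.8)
\textbf{Proof strategy for Theorem~\ref{keven} (minimal order of $\Phi_k(n)$).}
The plan is to reduce each case to the classical estimate for the minimal order of $\phi(n)$, namely $\liminf_{n\to\infty}\frac{\phi(n)\log\log n}{n}=e^{-\gamma}$, which in turn rests on Mertens' theorem $\prod_{p\le t}\left(1-\frac{1}{p}\right)\sim \frac{e^{-\gamma}}{\log t}$. First I would record that, in view of the explicit formulas for $\Phi_k(n)$, we have
\[
\frac{\Phi_k(n)}{n^k}=\frac{\phi(n)}{n}\cdot\beta_k(n),\qquad
\beta_k(n)=\prod_{\substack{p\mid n\\ p>2}}\left(1-\frac{(-1)^{k(p-1)/4}}{p^{k/2}}\right)
\]
for $k$ even, and $\beta_k(n)\equiv 1$ for $k$ odd. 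The odd case is then immediate: $\Phi_k(n)/n^k=\phi(n)/n$, so the minimal order is $e^{-\gamma}n^k/\log\log n$ verbatim. For $k$ even the factor $\beta_k(n)\le 1$ in the case $k\equiv 0\pmod 4$, so the liminf can only be pushed down; the task is to show the extremal behaviour is still driven by $n=\prod_{p\le t}p$ (or a close variant) and to compute the resulting constant.

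For the case $k\equiv 0\pmod 4$, along the sequence $n_t=\prod_{p\le t}p$ I would compute
\[
\frac{\Phi_k(n_t)}{n_t^k}=\prod_{p\le t}\left(1-\frac1p\right)\prod_{2<p\le t}\left(1-\frac{1}{p^{k/2}}\right)
\sim \frac{e^{-\gamma}}{\log t}\cdot\frac{1}{1-2^{-k/2}}\prod_{p}\left(1-\frac{1}{p^{k/2}}\right)
=\frac{e^{-\gamma}}{\log t}\cdot\frac{2^{k/2}}{2^{k/2}-1}\,\zeta(k/2)^{-1},
\]
using that $\prod_{2<p\le t}(1-p^{-k/2})$ converges (as $k/2\ge 2$) to $(1-2^{-k/2})^{-1}\prod_p(1-p^{-k/2})=(1-2^{-k/2})^{-1}\zeta(k/2)^{-1}$, and that $\log\log n_t\sim\log t$ by the prime number theorem ($\log n_t=\vartheta(t)\sim t$). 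This shows $\liminf \le \frac{2^{k/2}}{2^{k/2}-1}\zeta(k/2)^{-1}e^{-\gamma}$. For the matching lower bound I would argue that for any $n$ with largest prime factor influencing the size, $\frac{\Phi_k(n)}{n^k}\ge \frac{\phi(n)}{n}\prod_p(1-p^{-k/2})$ is too lossy near $p=2$, so instead split off the Euler factor at $2$: write $\beta_k(n)\ge \prod_{2<p}(1-p^{-k/2})=(1-2^{-k/2})\zeta(k/2)^{-1}$ uniformly in $n$ — wait, that inequality goes the wrong way, so more care is needed. The clean approach is: for any $n$, let $P=P(n)$ be its largest prime factor; then $\Phi_k(n)/n^k\ge \prod_{p\le P}(1-1/p)\cdot\prod_{2<p\le P}(1-p^{-k/2})$ only if $n$ is squarefree and equals $\prod_{p\le P}p$, and in general one shows that replacing $n$ by the product of primes up to a suitable cutoff can only decrease the ratio up to $o(1)$ — this is the standard "the minimum of $\phi(n)/n$ with a size constraint is attained at primorials" argument, and the extra factor $\beta_k(n)$, being a product over $p\mid n$ of quantities in $(0,1)$ bounded below by $1-p^{-k/2}\ge 1-3^{-k/2}$, perturbs the constant but not the $n\mapsto\text{primorial}$ extremiser. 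Carrying this through yields the liminf equals the constant computed above; I would then note $L(k/2,\chi_1)$ notation is needed only for the next case.

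For $k\equiv 2\pmod 4$ the Euler factor is $\left(1-\frac{1}{p^{k/2}}\right)$ for $p\equiv 1\pmod 4$ and $\left(1+\frac{1}{p^{k/2}}\right)$ for $p\equiv 3\pmod 4$, i.e. $\beta_k(p)=1-\frac{\chi(p)}{p^{k/2}}$ where $\chi$ is the non-trivial character mod $4$; hence along $n_t=\prod_{p\le t}p$,
\[
\frac{\Phi_k(n_t)}{n_t^k}\sim\frac{e^{-\gamma}}{\log t}\prod_{2<p\le t}\left(1-\frac{\chi(p)}{p^{k/2}}\right)\longrightarrow \frac{e^{-\gamma}}{\log t}\,L(k/2,\chi)^{-1},
\]
since $\prod_p\left(1-\chi(p)p^{-k/2}\right)^{-1}=L(k/2,\chi)$ and $\chi(2)=0$ so no factor at $2$ appears (this is why, unlike the $k\equiv 0\pmod 4$ case, there is no $2^{k/2}/(2^{k/2}-1)$ correction); here $\chi_1$ in the statement denotes this character $\chi$ mod $4$. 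The convergence of the product is legitimate because $k/2\ge 1$ and in fact $k/2$ is an odd integer $\ge 1$, and when $k/2=1$ the series $\sum\chi(p)/p$ converges conditionally so $L(1,\chi)=\pi/4\ne 0$, giving a genuine positive constant. The lower bound again follows from the primorial-extremiser argument, since $\beta_k(p)=1-\chi(p)p^{-k/2}$ lies in a fixed compact subinterval of $(0,\infty)$ bounded away from $0$ and $\infty$ over all primes $p>2$.

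\textbf{Main obstacle.} The routine parts are the evaluations along $n_t=\prod_{p\le t}p$; the delicate point is the matching \emph{lower} bound, i.e. showing no other sequence of integers does better. In the classical $\phi$ case this is the statement that $n\mapsto \phi(n)/n$ is minimised (subject to a bound on the number of prime factors / on $\log n$) at primorials, proved via the rearrangement/Mertens argument. Here one must check that multiplying by the bounded, positive, $n$-dependent factor $\beta_k(n)=\prod_{p\mid n,\,p>2}\beta_k(p)$ does not change the identity of the extremiser — intuitively clear since $\beta_k(p)\to 1$ rapidly and $\beta_k(p)$ is uniformly bounded above and below, so the dominant term $\prod_{p\mid n}(1-1/p)$ still forces $n$ to be (essentially) a primorial, while $\beta_k$ contributes only the convergent Euler product constant. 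Making this perturbation argument rigorous — e.g. by showing that for $n$ not of primorial shape one can strictly decrease $\Phi_k(n)/n^k$ while keeping $\log\log n$ fixed up to $o(1)$, or alternatively by an Abel/partial-summation comparison — is where I would spend the real effort, and I expect it to parallel the treatment in Tenenbaum \cite{T} with the extra Euler factor carried along as a harmless bounded multiplier.
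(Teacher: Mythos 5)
Your proposal follows the same route as the paper: split by the residue of $k \bmod 4$, evaluate $\Phi_k(n)/n^k=\frac{\phi(n)}{n}\beta_k(n)$ along the primorials $n_t=\prod_{p\le t}p$ to pin down the constant, and prove a matching uniform lower bound. But you talk yourself into unnecessary trouble in the case $k\equiv 0\pmod 4$: there every Euler factor $1-p^{-k/2}$ lies in $(0,1)$, so enlarging the set of primes dividing $n$ can only decrease $\beta_k(n)=\prod_{p\mid n,\,p>2}(1-p^{-k/2})$, and the uniform bound $\beta_k(n)\ge\prod_{p>2}(1-p^{-k/2})$ is valid. Your parenthetical "that inequality goes the wrong way" is a slip, caused by an algebra error: $\prod_{p>2}(1-p^{-k/2})=\zeta(k/2)^{-1}/(1-2^{-k/2})=\frac{2^{k/2}}{2^{k/2}-1}\zeta(k/2)^{-1}$, not $(1-2^{-k/2})\zeta(k/2)^{-1}$. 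With that corrected, $\Phi_k(n)/n^k\ge\frac{\phi(n)}{n}\cdot\frac{2^{k/2}}{2^{k/2}-1}\zeta(k/2)^{-1}$ holds for all $n$, and the classical $\liminf\phi(n)\log\log n/n=e^{-\gamma}$ finishes the lower bound; this is exactly what the paper does in its Case (1). No "primorial-extremiser" machinery is needed here.

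Your worry is, however, exactly right for $k\equiv 2\pmod 4$, because $\beta_k(p)=1+p^{-k/2}>1$ when $p\equiv 3\pmod 4$, so $\beta_k(n)$ is \emph{not} bounded below by the Euler-product constant (e.g.\ $\beta_2(5)=4/5<4/\pi=L(1,\chi_1)^{-1}$), and the monotone-factor trick has to be applied to the \emph{combined} factor $f(p)=(1-1/p)\bigl(1-\chi_1(p)/p^{k/2}\bigr)\in(0,1)$. The paper's written proof of Case (2) actually has a gap at just this point: the displayed chain starts from $\Phi_k(n)\ge n^k\prod_{p\le n}(1-1/p)\bigl(1-\chi_1(p)/p^{k/2}\bigr)$, which by Mertens is of size $e^{-\gamma}L(k/2,\chi_1)^{-1}n^k/\log n$, \emph{not} $n^k/\log\log n$, and the subsequent displayed bound does not follow. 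The fix you gesture at is the right one and should be spelled out: since $f(p)$ increases to $1$, for any $n$ with $\omega(n)=m$ one has $\Phi_k(n)/n^k=\prod_{p\mid n}f(p)\ge\prod_{p\le p_m}f(p)$, and because $n\ge\prod_{p\le p_m}p$ forces $\log\log n\ge(1+o(1))\log p_m$, this gives $\frac{\Phi_k(n)\log\log n}{n^k}\ge(1-o(1))\,e^{-\gamma}L(k/2,\chi_1)^{-1}$ as $\omega(n)\to\infty$ (the bounded-$\omega(n)$ case being trivial since the left side then tends to infinity). So the "real effort" you flag is genuinely required, and your instinct that the $k\equiv 2\pmod 4$ lower bound needs more care than the paper suggests is correct.
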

\section{Organization} In Section \ref{Prelim}, we review some critical results on primes and summation formulae that will be needed in the proofs of key lemmas illustrated in section \ref{key} and extremal order in section \ref{extremal}. In subsection \ref{generate}, we describe the generating function. In subsection \ref{boundssec}, we prove upper bounds for the functions appearing in the generating function computed in subsection \ref{generate}. 
In Section \ref{key}, we derive asymptotics of the summatory functions, which are also the key results that will be needed in the proofs of our main theorems. In Section \ref{Mainthm}, we establish the proofs of our main theorems. Finally, in section \ref{extremal}, we deduce proofs of Theorems \ref{kodd} and \ref{keven}. 
\section{Funding}
This work was supported by the University Grants Commission, Department of Higher Education, Government of India [191620135578 to B.C., 191620205105 to K.K.]; and the Science and Engineering Research Board, Department of Science and Technology, Government of India [SB/S2/RJN-053/2018 to S.C.]. The authors are grateful to the referee for valuable suggestions in an earlier version of the paper.
\section{Preliminaries}\label{Prelim}
This section reviews several crucial results needed to derive our main theorems. Some results can be obtained from earlier works, while some require proof.
We first summarize Merten's results on the asymptotic behavior of quantities depending on primes.
\begin{proposition}\label{prime}
For all $x \geq 2$,
\begin{align*}
\sum_{p\leq x} \frac{\log p}{p} = \log x+R(x),
\end{align*}
where $R(x)=O(1)$.
Furthermore, 
\begin{align} \label{Merten1}
\sum_{p\leq x} \frac{1}{p} = \log \log x+b_0+O\left(\frac{1}{\log x}\right),
\end{align}
where $b_0=1-\log \log 2+\int_{2}^{\infty} \frac{R(u)}{u (\log u)^2} du$.
\end{proposition}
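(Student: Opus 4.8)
The plan is to prove Proposition \ref{prime}, Mertens' classical estimates on sums over primes, following the standard two-step argument: first establish the weighted sum $\sum_{p\le x}\frac{\log p}{p}=\log x+O(1)$, then deduce the unweighted sum $\sum_{p\le x}\frac1p=\log\log x+b_0+O(1/\log x)$ by partial summation.

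\textbf{Step 1: The weighted sum.} First I would compare $\sum_{p\le x}\frac{\log p}{p}$ with $\sum_{n\le x}\frac{\Lambda(n)}{n}$, where $\Lambda$ is the von Mangoldt function; the difference coming from prime powers $p^m$ with $m\ge 2$ contributes $\sum_p \log p \sum_{m\ge 2} p^{-m}=\sum_p \frac{\log p}{p(p-1)}=O(1)$, which is a convergent series. To handle $\sum_{n\le x}\frac{\Lambda(n)}{n}$, I would start from the identity $\log n=\sum_{d\mid n}\Lambda(d)$ and sum over $n\le x$: this gives $\sum_{n\le x}\log n=\sum_{d\le x}\Lambda(d)\lfloor x/d\rfloor$. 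By Stirling, the left side is $x\log x-x+O(\log x)$. On the right side, writing $\lfloor x/d\rfloor=x/d+O(1)$, the main term is $x\sum_{d\le x}\frac{\Lambda(d)}{d}$ and the error is $O\big(\sum_{d\le x}\Lambda(d)\big)=O(x)$ by Chebyshev's bound $\psi(x)=O(x)$ (which itself follows from elementary binomial coefficient estimates). Dividing by $x$ yields $\sum_{d\le x}\frac{\Lambda(d)}{d}=\log x+O(1)$, and combining with the prime-power estimate gives $\sum_{p\le x}\frac{\log p}{p}=\log x+R(x)$ with $R(x)=O(1)$.

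\textbf{Step 2: The unweighted sum via partial summation.} Writing $a(p)=\frac{\log p}{p}$ and $A(t)=\sum_{p\le t}a(p)=\log t+R(t)$, I would apply Abel summation with the weight $1/\log t$:
\[
\sum_{p\le x}\frac1p=\sum_{p\le x}\frac{a(p)}{\log p}=\frac{A(x)}{\log x}+\int_2^x\frac{A(t)}{t(\log t)^2}\,dt.
\]
Substituting $A(t)=\log t+R(t)$, the boundary term is $1+O(1/\log x)$, the integral $\int_2^x\frac{dt}{t\log t}=\log\log x-\log\log 2$, and $\int_2^x\frac{R(t)}{t(\log t)^2}\,dt$ converges as $x\to\infty$ (since $R(t)=O(1)$) to $\int_2^\infty\frac{R(u)}{u(\log u)^2}\,du$, with tail $O(1/\log x)$. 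Collecting terms gives $\sum_{p\le x}\frac1p=\log\log x+b_0+O(1/\log x)$ with $b_0=1-\log\log 2+\int_2^\infty\frac{R(u)}{u(\log u)^2}\,du$, exactly as stated.

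The main obstacle — if one insists on a fully self-contained treatment — is establishing Chebyshev's bound $\psi(x)\ll x$, since everything else is bookkeeping with partial summation. This is standard (e.g. via $\binom{2n}{n}$), so I would simply cite it, noting that the error term $O(x)$ in Step 1 is all we need and no prime number theorem is required. A minor care point is the lower limit of integration: one should restrict to $t\ge 2$ throughout so that $\log t$ and $\log\log t$ are well-defined and positive, which is why the stated range is $x\ge 2$.
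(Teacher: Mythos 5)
Your proof is correct and follows the standard Mertens argument: first obtain $\sum_{n\le x}\Lambda(n)/n=\log x+O(1)$ from the identity $\log n=\sum_{d\mid n}\Lambda(d)$ together with Stirling and Chebyshev's bound $\psi(x)\ll x$, strip the higher prime powers, then pass to $\sum_{p\le x}1/p$ by Abel summation with weight $1/\log t$, exactly producing the constant $b_0=1-\log\log 2+\int_2^\infty R(u)/(u(\log u)^2)\,du$. The paper does not give its own proof but simply cites Montgomery--Vaughan (p.~50), where this same derivation appears, so your argument is essentially the one being invoked.
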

\begin{proof}
For proof, see \cite[page 50]{MV}.
\end{proof}
An immediate corollary to Proposition \ref{Merten1} yields Merten's formula
\begin{proposition}\label{primemerten}
 For $x \geq 2$,
\begin{align*}
\prod_{p\leq x} \left(1-\frac{1}{p}\right) =\frac{e^{-\gamma}}{\log x} \left(1+O\left(\frac{1}{\log x}\right) \right),
\end{align*}
where $\gamma$ is the Euler's constant.
\end{proposition}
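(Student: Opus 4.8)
The plan is to take logarithms, reduce the whole estimate to the second Mertens formula \eqref{Merten1}, isolate a convergent constant, and then identify that constant with $e^{-\gamma}$ by comparing with the behaviour of $\zeta(s)$ near $s=1$.

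First I would expand $\log(1-1/p)=-\tfrac1p-\sum_{m\ge2}\tfrac{1}{mp^m}$ and write $\log\prod_{p\le x}(1-1/p)=\sum_{p\le x}\log(1-1/p)=-\sum_{p\le x}\tfrac1p-\sum_{p\le x}\sum_{m\ge2}\tfrac1{mp^m}$. Since $\sum_{m\ge2}\tfrac1{mp^m}=-\log(1-1/p)-\tfrac1p=O(p^{-2})$, the double sum converges as $x\to\infty$ to a positive constant $C:=\sum_p\bigl(-\log(1-1/p)-\tfrac1p\bigr)$, and its tail is $\sum_{p>x}\sum_{m\ge2}\tfrac1{mp^m}\ll\sum_{n>x}n^{-2}\ll 1/x$. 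Hence $\sum_{p\le x}\log(1-1/p)=-\sum_{p\le x}\tfrac1p-C+O(1/x)$. Substituting \eqref{Merten1} from Proposition \ref{prime} turns this into $-\log\log x-(b_0+C)+O(1/\log x)$, and exponentiating (with $e^{O(1/\log x)}=1+O(1/\log x)$ for $x\ge 2$) gives
\[\prod_{p\le x}\Bigl(1-\frac1p\Bigr)=\frac{e^{-(b_0+C)}}{\log x}\Bigl(1+O\Bigl(\frac1{\log x}\Bigr)\Bigr).\]

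It then remains to prove $b_0+C=\gamma$. For $\sigma>1$ the Euler product yields $\log\zeta(\sigma)=\sum_p p^{-\sigma}+\sum_p\sum_{m\ge2}\tfrac1{mp^{m\sigma}}$, and by monotone convergence the last term tends to $C$ as $\sigma\to1^{+}$. Applying Abel summation to \eqref{Merten1} I would write $\sum_p p^{-\sigma}=(\sigma-1)\int_{2}^{\infty}\bigl(\log\log t+b_0+O(1/\log t)\bigr)t^{-\sigma}\,dt$; the substitution $u=(\sigma-1)\log t$ reduces the main term to $\int_{0}^{\infty}(\log u)e^{-u}\,du-\log(\sigma-1)=-\gamma-\log(\sigma-1)$ in the limit, the error terms contributing $o(1)$, so that $\sum_p p^{-\sigma}=-\log(\sigma-1)+b_0-\gamma+o(1)$. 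On the other hand $\zeta(\sigma)(\sigma-1)\to1$ forces $\log\zeta(\sigma)=-\log(\sigma-1)+o(1)$. Equating the two expressions for $\log\zeta(\sigma)$ gives $b_0-\gamma+C=0$, and the proposition follows.

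The only genuinely non-formal step is the last one: the elementary manipulations already show the product is $\asymp(\log x)^{-1}$ with \emph{some} positive constant, but identifying that constant as $e^{-\gamma}$ requires an external analytic input, supplied here by the simple pole of $\zeta$ at $s=1$ together with the evaluation $\int_0^\infty e^{-u}\log u\,du=-\gamma$ of Euler's constant. (If one prefers, this identification can simply be quoted as the classical value $b_0=\gamma-C$ of the Mertens constant.)
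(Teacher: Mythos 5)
Your proof is correct, and it is essentially the standard route — the one taken in the reference the paper cites (Tenenbaum, \cite[page 19]{T}). The first half is exactly what the paper has in mind when it calls Proposition \ref{primemerten} ``an immediate corollary'' of \eqref{Merten1}: take logarithms, peel off $-\sum_{p\le x}1/p$, observe that the remaining $\sum_{p\le x}(-\log(1-1/p)-1/p)$ is a convergent series with tail $O(1/x)$, and exponentiate, using $e^{O(1/\log x)}=1+O(1/\log x)$ for $x\ge 2$. This yields the product as $e^{-(b_0+C)}(\log x)^{-1}(1+O(1/\log x))$ with the correct error term.

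You are also right that calling the full statement ``immediate'' slightly undersells it: the identification $b_0+C=\gamma$ is not formal, and your argument for it — comparing the two expansions of $\log\zeta(\sigma)$ as $\sigma\to1^{+}$, one coming from the Euler product and Abel summation against \eqref{Merten1}, the other from $(\sigma-1)\zeta(\sigma)\to1$, together with $\int_0^\infty e^{-u}\log u\,du=-\gamma$ — is sound. One small point worth spelling out in a final write-up: in the substitution $u=(\sigma-1)\log t$ the lower limit is $(\sigma-1)\log 2$, not $0$, so both $\int(\log u)e^{-u}\,du\to-\gamma$ and $-\log(\sigma-1)\int e^{-u}\,du=-\log(\sigma-1)+O\bigl((\sigma-1)|\log(\sigma-1)|\bigr)$ need the harmless but nonvacuous remark that $(\sigma-1)\log(\sigma-1)\to 0$. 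With that noted, the proof is complete and matches the cited source in substance.
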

\begin{proof}
See \cite[page 19]{T}.
\end{proof}
\begin{proposition}\label{mertengeneral}For $x \geq 2$ and $j \geq 2$,
\begin{align*}
\prod_{p\leq x} \left(1-\frac{1}{p^j}\right) =\zeta(j)^{-1} +O\left(\frac{1}{x^{j-1}} \right).
\end{align*}
\end{proposition}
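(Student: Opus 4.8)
The final statement to prove is Proposition~\ref{mertengeneral}: for $x \geq 2$ and $j \geq 2$,
\[
\prod_{p\leq x} \left(1-\frac{1}{p^j}\right) =\zeta(j)^{-1} +O\left(\frac{1}{x^{j-1}} \right).
\]

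\textbf{Approach.} The plan is to compare the finite product over $p \leq x$ with the full Euler product $\prod_p (1-p^{-j})^{-1} = \zeta(j)$, valid since $j \geq 2 > 1$. Writing $P(x) = \prod_{p \leq x}(1-p^{-j})$, we have $P(x) = \zeta(j)^{-1} \prod_{p > x}(1-p^{-j})^{-1}$, so the task reduces to showing the tail factor $\prod_{p > x}(1-p^{-j})^{-1} = 1 + O(x^{-(j-1)})$ and then tracking how this multiplicative error transfers to an additive one after multiplying by the bounded constant $\zeta(j)^{-1}$.

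\textbf{Key steps.} First I would take logarithms of the tail product: $\log \prod_{p>x}(1-p^{-j})^{-1} = -\sum_{p>x}\log(1-p^{-j}) = \sum_{p>x}\sum_{m\geq 1} \frac{1}{m p^{mj}}$. Since $p^{-j} \leq 2^{-j} \leq 1/4$, each inner sum is $\leq \frac{p^{-j}}{1-p^{-j}} \leq \tfrac{4}{3} p^{-j}$, so the whole double sum is $\ll \sum_{p > x} p^{-j} \ll \sum_{n > x} n^{-j} \ll \int_x^\infty t^{-j}\,dt = \frac{x^{-(j-1)}}{j-1} \ll x^{-(j-1)}$. Hence $\log \prod_{p>x}(1-p^{-j})^{-1} \ll x^{-(j-1)}$, and since this quantity tends to $0$, exponentiating gives $\prod_{p>x}(1-p^{-j})^{-1} = 1 + O(x^{-(j-1)})$. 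Multiplying through by $\zeta(j)^{-1}$, which lies in $(0,1)$ and in particular is bounded, converts the relative error into the stated absolute error: $P(x) = \zeta(j)^{-1}(1 + O(x^{-(j-1)})) = \zeta(j)^{-1} + O(x^{-(j-1)})$.

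\textbf{Main obstacle.} There is no serious obstacle here; this is a routine tail estimate. The only mild point of care is ensuring the implied constants are uniform in $j$ for $j \geq 2$ (so that the $O$-notation is legitimate as stated), which is why it is cleanest to use the crude bound $\frac{1}{m}p^{-mj}(1-p^{-j})^{-1} \leq \tfrac{4}{3}p^{-j}$ rather than anything depending delicately on $j$, and to absorb the harmless factor $\frac{1}{j-1} \leq 1$ into the constant. One also wants $x \geq 2$ precisely so that the smallest prime $2$ satisfies $2^{-j} \leq 1/4$, keeping the geometric series comparison valid. Alternatively, one could invoke Proposition~\ref{mertengeneral}'s statement as essentially a special case of bounding the Euler product tail for $\zeta(s)$, but the direct argument above is self-contained and short.
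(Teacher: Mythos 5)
Your proof is correct and follows essentially the same route as the paper's: the paper's one-line proof simply records the logarithmic identity $\log\zeta(j) = -\sum_{p\le x}\log(1-p^{-j}) + O\bigl(\int_x^\infty t^{-j}\,dt\bigr)$, which is exactly your tail estimate $\log\prod_{p>x}(1-p^{-j})^{-1}\ll x^{-(j-1)}$ in disguise, followed by exponentiation. You have merely spelled out the intermediate steps the paper leaves implicit.
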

\begin{proof}
The proof follows from the identity
\begin{align*}
\log \zeta(j)= -\sum_{p\leq x} \log \left(1-\frac{1}{p^j}\right)+O\left(\int_{x}^{\infty} \frac{1}{t^{j}} dt\right).
\end{align*}
\end{proof}
\begin{proposition}\label{mertengeneralap}Let $\chi_1$ be the non-principal Dirichlet character mod $4$. For $x \geq 2$ and $j \geq 1$,
\begin{align*}
\prod_{ p\leq x} \left(1-\frac{\chi_1(p)}{p^j}\right) =L(j, \chi_1)^{-1} +\begin{cases}
    & O\left(\frac{1}{x^{j-1}} \right) \ \ \mbox{if} \ j>1,\\
    & O\left(\frac{1}{\log x} \right)\ \ \mbox{if} \ j=1.
\end{cases}
\end{align*}
\end{proposition}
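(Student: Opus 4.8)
The plan is to mimic the proof of Proposition~\ref{mertengeneral}, taking logarithms and controlling the tail, while being careful about the slower convergence of the Dirichlet $L$-function at the edge of its half-plane of convergence. Recall that for the non-principal character $\chi_1 \bmod 4$ the Euler product
\[
L(j,\chi_1)=\prod_{p}\left(1-\frac{\chi_1(p)}{p^j}\right)^{-1}
\]
converges absolutely for $\mathrm{Re}\,j>1$ and conditionally (by Dirichlet's test, since $L(s,\chi_1)$ is entire and nonvanishing for $\mathrm{Re}\,s>0$) for $j>0$; in particular $L(1,\chi_1)=\pi/4\neq 0$, so the reciprocal on the right-hand side makes sense in both regimes.

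First I would treat the case $j>1$. Taking the logarithm of the finite product and of $L(j,\chi_1)^{-1}$ and subtracting, the difference is
\[
-\sum_{p>x}\log\!\left(1-\frac{\chi_1(p)}{p^j}\right)=\sum_{p>x}\left(\frac{\chi_1(p)}{p^j}+O\!\left(\frac{1}{p^{2j}}\right)\right),
\]
and since $|\chi_1(p)|\le 1$ this is $O\!\big(\sum_{p>x}p^{-j}\big)=O\!\big(\int_x^\infty t^{-j}\,dt\big)=O(x^{1-j})$. Exponentiating and using that $\log$ of a product near $1$ differs from the product minus $1$ by a comparable amount (here the quantities are bounded, so $e^{O(x^{1-j})}=1+O(x^{1-j})$) gives
\[
\prod_{p\le x}\left(1-\frac{\chi_1(p)}{p^j}\right)=L(j,\chi_1)^{-1}\bigl(1+O(x^{1-j})\bigr)=L(j,\chi_1)^{-1}+O\!\left(\frac{1}{x^{j-1}}\right),
\]
as claimed, where the implied constant may depend on $j$ but can be made uniform on any $j\ge 1+\eta$.

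The case $j=1$ is the main obstacle, because the series $\sum_p \chi_1(p)/p$ converges only conditionally and its tail is not $O(1/x)$ but merely $O(1/\log x)$; this is the analytic input that replaces the crude absolute-convergence bound. The cleanest route is to write, for $\mathrm{Re}\,s\ge 1$,
\[
\log L(s,\chi_1)=\sum_{p}\frac{\chi_1(p)}{p^s}+\sum_{p}\sum_{m\ge 2}\frac{\chi_1(p)^m}{m\,p^{ms}},
\]
where the double sum is absolutely and uniformly convergent for $\mathrm{Re}\,s\ge 1$. For the single sum I would invoke the standard estimate $\sum_{p\le x}\chi_1(p)/p = \sum_p \chi_1(p)/p + O(1/\log x)$, which follows from partial summation applied to the prime-number-theorem-type bound $\sum_{p\le x}\chi_1(p)\log p = O\big(x\exp(-c\sqrt{\log x})\big)$ (a consequence of the zero-free region for $L(s,\chi_1)$), or, more elementarily, from the Dirichlet-series analogue of Mertens' argument using that $\sum_{n\le x}\chi_1(n)\Lambda(n)/n$ is bounded with error $O(1/\log x)$. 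Combining, $\sum_{p\le x}\log(1-\chi_1(p)/p)^{-1} = \log L(1,\chi_1)+O(1/\log x)$, hence $\prod_{p\le x}(1-\chi_1(p)/p) = L(1,\chi_1)^{-1}+O(1/\log x)$ after exponentiating, since all terms are bounded away from $0$ and $\infty$. If one prefers to avoid the zero-free region entirely, an alternative is to bound the tail $\sum_{p>x}\chi_1(p)/p$ directly via partial summation from $\sum_{p\le y}\chi_1(p) = O(\sqrt y \log y)$ — but this only gives $O(x^{-1/2}\log x)$, which is stronger than needed, so in fact even the elementary route suffices and the zero-free region is not required. Either way, the $j=1$ error of $O(1/\log x)$ is exactly what the statement asserts, completing the proof.
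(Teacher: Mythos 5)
Your main argument is sound and in fact supplies the proof that the paper waves away (the paper's ``proof'' is the single sentence ``The proof is trivial,'' which for $j>1$ is defensible since it mirrors Proposition~\ref{mertengeneral}, but for $j=1$ genuinely needs the extra input you supply: $L(1,\chi_1)\neq 0$ together with a rate of convergence for $\sum_p\chi_1(p)/p$). The $j>1$ case is handled exactly as in the paper's preceding proposition, and the $j=1$ case via $\sum_{p\le x}\chi_1(p)\log p=O\bigl(x\exp(-c\sqrt{\log x})\bigr)$ plus partial summation is correct and gives an error term even stronger than the stated $O(1/\log x)$.

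The one flaw is the parenthetical ``alternative'' at the end of the $j=1$ case. The bound $\sum_{p\le y}\chi_1(p)=O(\sqrt{y}\log y)$ is not elementary; it is GRH-strength for $L(s,\chi_1)$. You have likely conflated it with the trivial bound $\sum_{n\le y}\chi_1(n)=O(1)$, which holds because the character sums to zero over each period but says nothing directly about primes. Unconditionally the best one has is the de la Vall\'ee--Poussin bound $\sum_{p\le y}\chi_1(p)=O\bigl(y\exp(-c\sqrt{\log y})\bigr)$, which is exactly the zero-free-region input you were trying to sidestep. So the concluding claim that ``the zero-free region is not required'' is unjustified as written. (There is a genuinely elementary, Mertens-style route to $\sum_{p>x}\chi_1(p)/p=O(1/\log x)$ via $\sum_{n\le x}\chi_1(n)\Lambda(n)/n$ and partial summation, using only $L(1,\chi_1)\neq0$, but that is a different argument from the one you sketch, and it is not automatic.) Deleting or replacing that final remark would make the proof fully correct.
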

\begin{proof}
The proof is trivial.
\end{proof}

Next, we summarize the summation formulae.

\begin{proposition}\label{P1}
For any positive $a, T, T^{\prime}$, we have
\begin{align*}
\frac{1}{2\pi i}\int_{a-iT}^{a+iT^{\prime}}\frac{y^s}{s} ds=h(y)+ O\left(\frac{y^a}{|\log y|} \left(\frac{1}{T}+\frac{1}{T^{\prime}} \right)\right) \quad(y\neq 1),
\end{align*}
where $h(y)$ is defined as the following:
\begin{align*}
h(y)=\begin{cases}
1& \quad y>1,\\
\frac{1}{2}& \quad y=1,\\
0& \quad 0<y<1.
\end{cases}
\end{align*}
\end{proposition}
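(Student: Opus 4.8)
The statement is the classical effective (truncated) Perron formula, and the plan is to deform the vertical segment $[a-iT,\,a+iT']$ into a tall rectangle and read off the answer from the residue theorem, handling $y>1$ and $0<y<1$ separately because the decay direction of $y^{s}=e^{s\log y}$ is controlled by the sign of $\log y$. Throughout I would fix $y\neq1$ and introduce an auxiliary width $U>0$ that is eventually sent to $+\infty$.

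For $y>1$ I would integrate $y^{s}/s$ around the positively oriented rectangle with corners $a-iT$, $a+iT'$, $-U+iT'$, $-U-iT$. The only enclosed singularity is the simple pole at $s=0$, of residue $y^{0}=1$, so the rectangle integral equals $2\pi i$; hence $\tfrac{1}{2\pi i}\int_{a-iT}^{a+iT'}\tfrac{y^{s}}{s}\,ds$ equals $1$ minus the contributions of the remaining three edges. On the far left edge $\Re s=-U$ one has $|y^{s}/s|\le y^{-U}/U$ on a path of length $T+T'$, so that edge contributes $O\!\bigl(y^{-U}(T+T')/U\bigr)$, which vanishes as $U\to\infty$ since $y>1$. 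On the horizontal edge at height $T'$, parametrised by $s=\sigma+iT'$, the integrand is bounded by $y^{\sigma}/T'$, and $\int_{-\infty}^{a}y^{\sigma}\,d\sigma=y^{a}/\log y$; thus this edge contributes $O\!\bigl(y^{a}/(T'\log y)\bigr)$, and the edge at height $-T$ contributes $O\!\bigl(y^{a}/(T\log y)\bigr)$ by the same estimate. Since $\log y=|\log y|$ and $h(y)=1$ in this range, collecting the three bounds yields the claimed identity.

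For $0<y<1$ I would mirror the construction, closing to the right with the rectangle of corners $a-iT$, $a+iT'$, $U+iT'$, $U-iT$. This rectangle encloses no pole (the pole at $s=0$ lies to the left of $\Re s=a$), so the closed integral is $0$ and the segment integral equals the negative of the contributions of the other three edges. The far right edge $\Re s=U$ now contributes $O\!\bigl(y^{U}(T+T')/U\bigr)\to0$ because $y<1$, while the two horizontal edges are controlled by $\int_{a}^{\infty}y^{\sigma}\,d\sigma=-y^{a}/\log y=y^{a}/|\log y|$, giving $O\!\bigl(y^{a}/(T'|\log y|)\bigr)$ and $O\!\bigl(y^{a}/(T|\log y|)\bigr)$. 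As $h(y)=0$ here, this is exactly the asserted estimate.

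No step is a genuine obstacle; the proof is bookkeeping with a contour. The only care needed is in getting the orientation of each rectangle right and in justifying that the far vertical edge drops out in the limit $U\to\infty$ — both of which depend on $y\neq1$. The exclusion $y=1$ is essential: then the integrand is $1/s$, whose segment integral is $\tfrac{1}{2\pi i}\bigl(\log(a+iT')-\log(a-iT)\bigr)$, approaching $h(1)=\tfrac12$ only as $T,T'\to\infty$, while the stated error $y^{a}/|\log y|$ is undefined. Alternatively one may simply cite this lemma from a standard source such as Montgomery--Vaughan; I would prefer to include the short self-contained argument above.
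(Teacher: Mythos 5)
The paper does not give a proof of this proposition; it simply defers to Tenenbaum's book (the reference \cite{T}, page 218). Your self-contained contour argument is correct and is, in substance, the standard proof one finds in that source and in Montgomery--Vaughan: close the box to the left for $y>1$ to pick up the residue $1$ at $s=0$, close to the right for $0<y<1$ to pick up nothing, bound the far vertical edge by $y^{\mp U}(T+T')/U\to 0$, and bound each horizontal edge by $\int y^{\sigma}\,d\sigma / T = y^{a}/(T|\log y|)$. Nothing is missing, and your remark about why $y=1$ must be excluded (the integrand degenerates to $1/s$ and the error term is undefined) is a sensible clarification that the paper's terse citation omits.
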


\begin{proof}
For proof, see \cite[page 218]{T}.
\end{proof}
As a direct application of Proposition \ref{P1}, we obtain Perron's formula. 
Let $f(n)$ be an arithmetic function and $F(s)$ be its Dirichlet series 
with abscissa of absolute convergence $\sigma_a$. Let $A(x)$ be the normalized summatory function 
\begin{align*}\label{sum}
A(x)=\sum^{\prime}_{n \leq x} f(n),
\end{align*}
where $\sum^{\prime}$ signifies that the coefficient of $f(n)$ is $\frac{1}{2}$ when $x \in \mathbb{N}$.
\begin{proposition}[Perron's formula]\label{Perron}
For $a> \max(0, \sigma_a)$, $T\geq 1$ and $x \geq 1$, we have
\begin{align*}
A(x)=\frac{1}{2\pi i} \int_{a-iT}^{a+iT} F(s) \ x^s\frac{ds}{s} +O\left(\frac{x^{a}}{T}\sum_{n\geq1} \frac{|f(n)|}{n^a |\log (x/n)|} \right).
\end{align*}
\end{proposition}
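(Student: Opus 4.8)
The plan is to deduce Perron's formula directly from the truncated Mellin--Perron integral in Proposition~\ref{P1} by an interchange of summation and integration, handling the error terms uniformly in $n$. First I would write $F(s)=\sum_{n\ge1} f(n)n^{-s}$, which converges absolutely for $\Re(s)=a>\sigma_a$, and therefore the integral $\frac{1}{2\pi i}\int_{a-iT}^{a+iT} F(s)\,x^s\,\frac{ds}{s}$ can be evaluated by exchanging the (finite-length) integral with the absolutely convergent sum. This yields
\begin{align*}
\frac{1}{2\pi i} \int_{a-iT}^{a+iT} F(s)\,x^s\,\frac{ds}{s}
= \sum_{n\ge1} f(n)\,\frac{1}{2\pi i}\int_{a-iT}^{a+iT}\Bigl(\frac{x}{n}\Bigr)^{s}\frac{ds}{s}.
\end{align*}
The interchange is justified because on the compact segment $[a-iT,a+iT]$ the integrand is bounded by $\sum_n |f(n)| n^{-a} x^{a}/|s|$, which is integrable.

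Next I would apply Proposition~\ref{P1} with $y=x/n$ to each inner integral, splitting the sum according to whether $x/n>1$, $x/n=1$, or $x/n<1$, i.e.\ according to $n<x$, $n=x$, or $n>x$. For $n\ne x$ we get $h(x/n)$ plus an error of size $O\bigl(\frac{(x/n)^a}{|\log(x/n)|}\cdot\frac{2}{T}\bigr)$; summing the main terms reproduces exactly the normalized partial sum $A(x)=\sum'_{n\le x} f(n)$, since $h(x/n)=1$ for $n<x$, $h(1)=\tfrac12$ for $n=x$, and $h(x/n)=0$ for $n>x$. For the term $n=x$ (which arises only when $x\in\mathbb N$), the integrand is $1/s$ and $\frac{1}{2\pi i}\int_{a-iT}^{a+iT}\frac{ds}{s}$ differs from $\tfrac12$ by $O(a/(aT))=O(1/T)$, which is absorbed into the error since $|f(x)| x^{-a}/|\log(x/x)|$ is to be interpreted as contributing the $n=x$ bound $|f(x)|$ directly; I would state this term separately as $O(x^{a}|f(x)|/(x^{a}T))$ and note it is dominated by the claimed error term. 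Collecting the error contributions over all $n$ gives
\begin{align*}
O\!\left(\frac{x^{a}}{T}\sum_{n\ge1}\frac{|f(n)|}{n^{a}\,|\log(x/n)|}\right),
\end{align*}
which is precisely the stated error.

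The main obstacle is bookkeeping rather than depth: one must be careful that the error term $\sum_n |f(n)| n^{-a}/|\log(x/n)|$ makes sense --- in particular that the $n=x$ term, where $\log(x/n)=0$, is excised and handled separately as above, and that the sum over $n$ with $n$ close to $x$ does not blow up (it does not, because such $n$ are isolated integers and the convention is that the $n=x$ term is replaced by $O(|f(x)|)$, already absorbed). A secondary point is to confirm that the interchange of $\sum$ and $\int$ is legitimate; this follows from Fubini/Tonelli using absolute convergence of $F$ on the line $\Re(s)=a$ together with the boundedness of $1/s$ and $x^s$ on the finite contour. Once these are in place, the identity follows immediately from Proposition~\ref{P1}.
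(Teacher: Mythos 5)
Your proposal is correct and follows exactly the approach the paper intends: the paper's proof consists of the single line ``The proof follows from Proposition~\ref{P1},'' meaning precisely the expansion of $F(s)$ as an absolutely convergent Dirichlet series on $\Re(s)=a$, interchange with the finite-contour integral, and termwise application of Proposition~\ref{P1} with $y=x/n$, as you describe. Your care with the $n=x$ term and the Fubini justification fills in the details the paper leaves implicit, and no gaps remain.
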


\begin{proof}
The proof follows from Proposition \ref{P1}. 
\end{proof}

To handle the counting function in $\eqref{Function}$, we introduce a general counting function 
for any real-valued arithmetic function $f$ and $x, y\ge 1, z \in \mathbb{C}$, 
\begin{align*}
\phi_{f}(x, y):=\sum_{\substack{n \leq x\\ f(n) \leq y}}1=|\{ n \leq x; \ \  f(n) \leq y\}|,
\end{align*}
In the preceding lemma, we estimate $\phi_{f}(x, y)$ using the summatory function
\[A_z(x)=\sum_{n \leq x} f(n)^{-z}.\numberthis\label{Az}\]

\begin{proposition}\label{Perron0}
Let $f$ be any real-valued arithmetical function. Also, suppose $f(n) \geq 0$ for every $n \geq 1$. Then for any $U, U^{\prime} \geq 1$ and $b>0$, we have
\begin{align*}
\phi_{f}(x, y)=\frac{1}{2\pi i}\int_{b-iU}^{b+iU^{\prime}} A_z(x) \frac{y^z}{z} dz+O\left (y^{b} \sum_{n \leq x} f(n)^{-b} \left(\frac{1}{U}+\frac{1}{U^{\prime}} \right)\right).
\end{align*}
where $A_z(x)$ as defined in \eqref{Az}.
\end{proposition}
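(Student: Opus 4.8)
The plan is to read the right-hand side of the asserted identity as a superposition, over the integers $n\le x$, of the truncated Mellin-kernel integrals already treated in Proposition~\ref{P1}. Since $A_z(x)=\sum_{n\le x}f(n)^{-z}$ is a \emph{finite} Dirichlet polynomial---the sum ranges only over $n\le x$---the interchange of this sum with the integration over the segment from $b-iU$ to $b+iU'$ is unproblematic and gives
\[
\frac{1}{2\pi i}\int_{b-iU}^{b+iU'}A_z(x)\,\frac{y^{z}}{z}\,dz=\sum_{n\le x}\frac{1}{2\pi i}\int_{b-iU}^{b+iU'}\frac{\bigl(y/f(n)\bigr)^{z}}{z}\,dz .
\]
We assume throughout, as holds in every application, that $f>0$ on the relevant range, so that $f(n)^{-z}$ is meaningful, and we read the identity with the half-weight convention for the finitely many $n$ with $f(n)=y$, in the spirit of the $\sum'$ used above---all such indices being in any case absent for the explicit values of $y$ that occur later.

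Next I would apply Proposition~\ref{P1} to each summand, with its ``$y$'' taken to be $w_n:=y/f(n)$, with $a=b>0$, $T=U$, and $T'=U'$. For every $n\le x$ with $f(n)\ne y$ this produces
\[
\frac{1}{2\pi i}\int_{b-iU}^{b+iU'}\frac{w_n^{z}}{z}\,dz=h(w_n)+O\!\left(\frac{w_n^{b}}{|\log w_n|}\Bigl(\tfrac1U+\tfrac1{U'}\Bigr)\right).
\]
By the definition of $h$ one has $h(w_n)=1$ exactly when $w_n>1$, i.e.\ when $f(n)<y$, and $h(w_n)=0$ when $f(n)>y$; summing these main terms therefore reproduces $|\{n\le x:\ f(n)\le y\}|=\phi_f(x,y)$, with the half-weight convention absorbing the indices having $f(n)=y$. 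Writing $w_n^{b}=y^{b}f(n)^{-b}$ and summing the error contributions yields a total error of magnitude $O\bigl(y^{b}\,(\tfrac1U+\tfrac1{U'})\sum_{n\le x}f(n)^{-b}\,|\log(y/f(n))|^{-1}\bigr)$, which is already of the shape asserted in the Proposition.

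The only step calling for a little care---and the one I would expect to be the main obstacle---is controlling the weight $|\log(y/f(n))|^{-1}$ in the per-term error supplied by Proposition~\ref{P1}: it is large precisely for those $n$ with $f(n)$ at small multiplicative distance from $y$. I would deal with it in the customary fashion: when $y$ is bounded away multiplicatively from the finite set $\{f(n):n\le x\}$---which is exactly the situation in the applications (Section~\ref{Mainthm})---one has $|\log(y/f(n))|\gg 1$ uniformly, so the error collapses to $O\bigl(y^{b}\,(\tfrac1U+\tfrac1{U'})\sum_{n\le x}f(n)^{-b}\bigr)$, as claimed. Nothing beyond Proposition~\ref{P1} and the elementary identification of $\sum_{n\le x}h(y/f(n))$ with $\phi_f(x,y)$ is required.
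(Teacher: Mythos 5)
Your argument---interchange the finite sum with the integral, then apply Proposition~\ref{P1} to each summand with $w_n=y/f(n)$, $a=b$, $T=U$, $T'=U'$---is exactly the route the paper takes; its own proof is a one-line invocation of that interchange. You have, however, put your finger on a real imprecision in the Proposition as stated: a term-by-term application of Proposition~\ref{P1} produces the factor $|\log(y/f(n))|^{-1}$ in each summand of the error, which is unbounded if $y$ lies close, multiplicatively, to some value $f(n)$ with $n\le x$, and the Proposition's error term omits this weight without any accompanying hypothesis. Your remark that in the intended applications one has $|\log(y/f(n))|\gg 1$ uniformly is the honest reading of the statement, but strictly speaking the Proposition should either retain the logarithmic weight or carry a hypothesis excluding $y$ from a multiplicative neighbourhood of $\{f(n):n\le x\}$; as it stands, the bound is not literally a consequence of Proposition~\ref{P1} alone.
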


\begin{proof}
The proof follows directly from Proposition \ref{P1} and the fact that 

\begin{align*}
\int_{b-iU}^{b+iU^{\prime}} \left( \sum_{n \leq x} f(n)^{-z}  \right) \frac{y^z}{z} dz= \sum_{n \leq x} \int_{b-iU}^{b+iU^{\prime}} f(n)^{-z}   \frac{y^z}{z} dz.
\end{align*}

\end{proof} 
In our applications of Perron's formula, we shall use the following estimates for the Riemann zeta function. For large $|t|\geq 1$,
\begin{align*}
\zeta(\sigma+it)\ll \begin{cases}
 |t|^{\varepsilon}& \quad \sigma \geq 1,\\
|t|^{\frac{1}{2}(1-\sigma)} &\quad 0<\sigma < 1,\\
|t|^{\frac{1}{2}-\sigma} &\quad \sigma\leq 0.
\end{cases} \numberthis \label{zetabounds}
\end{align*}
Finally, we derive bounds on the coefficients $\alpha_{k}(p)$. 
From \eqref{olx}, we see that for $k\equiv 2\pmod 4$ and $p\equiv 3\pmod 4$, $\alpha_{k}(p)\le\left(1+2^{-k/2}\right)$ and $\alpha_{k}(p)\le 1$, otherwise. Similarly, one gets the lower bound 
\begin{align*}
\alpha_{k}(p) \geq \begin{cases}
1& \quad \text{if } k\equiv 2\pmod 4 \text{ and } p \equiv 3 \pmod{4},\\
 \left(1-\frac{1}{2^{k/2}} \right)&  \quad \text{otherwise}. 
\end{cases}
\end{align*}
Combining these, we obtain
\begin{proposition}\label{bound}
For any prime $p$ and the arithmetical function $\alpha_k$ defined in \eqref{olx}
\begin{align*} 
\left(1-\frac{1}{2^{k/2}} \right) \leq \alpha_{k}(p) \leq  \left(1+\frac{1}{2^{k/2}} \right).
\end{align*}

\end{proposition}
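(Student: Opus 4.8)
The plan is to establish Proposition \ref{bound} by an elementary case analysis governed by the residue of $k$ modulo $4$ and, when $k$ is even, the residue of the odd prime $p$ modulo $4$. First I would dispose of $p = 2$: by \eqref{olx}, $\alpha_k(2) = 1$, which lies in $[1 - 2^{-k/2},\, 1 + 2^{-k/2}]$ trivially. For odd $p$ the definition gives $\alpha_k(p) = 1 - (-1)^{k(p-1)/4}\, p^{-k/2}$, so everything reduces to pinning down the sign $(-1)^{k(p-1)/4} \in \{\pm 1\}$; note that because $k$ is even and $p - 1$ is even, the exponent $k(p-1)/4$ is a bona fide integer, which is the only point needing a moment's care.

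Next I would compute that sign. If $k \equiv 0 \pmod 4$, write $k = 4\ell$; then $k(p-1)/4 = \ell(p-1)$ is even, so the sign is $+1$ and $\alpha_k(p) = 1 - p^{-k/2}$. If $k \equiv 2 \pmod 4$, write $k = 2m$ with $m$ odd; then $k(p-1)/4 = m(p-1)/2$ has the same parity as $(p-1)/2$, which is even for $p \equiv 1 \pmod 4$ and odd for $p \equiv 3 \pmod 4$. Hence $\alpha_k(p) = 1 - p^{-k/2}$ when $p \equiv 1 \pmod 4$ and $\alpha_k(p) = 1 + p^{-k/2}$ when $p \equiv 3 \pmod 4$. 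This reproduces the piecewise values already recorded just before the statement, and it is consistent with the third branch of the explicit formula for $\Phi_k(n)$.

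Finally I would read off the bounds. In every case $\alpha_k(p)$ equals $1$, $1 - p^{-k/2}$, or $1 + p^{-k/2}$, and since $p \geq 2$ we have $0 < p^{-k/2} \leq 2^{-k/2}$; therefore $1 - 2^{-k/2} \leq 1 - p^{-k/2} \leq 1$ and $1 \leq 1 + p^{-k/2} \leq 1 + 2^{-k/2}$, while the remaining value $\alpha_k(p) = 1$ sits inside $[1 - 2^{-k/2},\, 1 + 2^{-k/2}]$. This yields the claimed two-sided inequality uniformly in $p$ and in the admissible values of $k$. I do not expect any genuine obstacle here: the content is entirely the sign computation together with the monotonicity estimate $p^{-k/2} \leq 2^{-k/2}$. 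If one prefers the slightly tighter constants indicated in the surrounding text, one only has to invoke $p \geq 3$ in the odd-prime cases and replace $2^{-k/2}$ by $3^{-k/2}$ there, but the uniform bound stated in the proposition is all that is needed in the later arguments.
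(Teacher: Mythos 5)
Your proposal is correct and follows essentially the same route as the paper: a case analysis on $k \bmod 4$ and (for $k \equiv 2 \pmod 4$) on $p \bmod 4$ to determine the sign $(-1)^{k(p-1)/4}$, followed by the elementary bound $p^{-k/2} \le 2^{-k/2}$. The paper presents this more tersely, reading off the signs directly from the rewritten formula for $\Phi_k(n)$ and stating the two one-sided bounds before "Combining these"; you make the sign computation and the $p=2$ case explicit, which is a small clarification rather than a different argument.
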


\medskip
\section{First steps} \label{First}
\subsection{Generating Function.} \label{generate}
Let $s=\sigma+it$ and $z=\xi+i\eta$ be complex variables. Let $S_\alpha =\{(s,z):\sigma+(1-\delta)\xi >\alpha\}$, define
\[F_{k, \beta}(s,z)=\sum_{n=1}^{\infty}n^{-s+\beta z}{\Phi_k(n)^{-z}}.\]
The series converges on $S_{1}$ since $\Phi_k(n)\gg n^k/(\log\log n)^2$. Since $\Phi_k(n)^{-z}$ is multiplicative so $F_{k, \beta}$ has an Euler product representation on $S_{1}$ which depends on the parity of $k$. 
\begin{description}
\item[Case (1)] For odd $k$, we have the Euler product representation of $F_{k, \beta}$ on $S_{1}$ as 
\begin{align}\label{G}
    F_{k, \beta}(s,z)&=\prod_{p}\{1+p^{-s+\delta z} \phi(p)^{-z}+p^{-2s+2\delta z}\phi(p^2)^{-z}+p^{-3s+3\delta z}\phi(p^3)^{-z}+\cdots \} \nonumber \\
    &=\prod_p\{1+p^{-s+\delta z}(p-1)^{-z}(1+p^{-s+\delta z-z}+p^{-2s+2\delta z-2z}+\cdots) \}       \nonumber\\
     &=\prod_p\{1+p^{-s+\delta z}(p-1)^{-z}(1-p^{-s+\delta z-z})^{-1} \}       \nonumber\\
     &=\prod_p\{1-p^{-s+\delta z-z}+p^{-s+\delta z}(p-1)^{-z} \}\zeta(s+z-\delta z)  \nonumber \\
     &=G_{k, \beta}(s, z)\zeta(s+z-\delta z),
\end{align}
where $\zeta$ is the Riemann zeta function. \item[Case (2)] When $k$ is even, then the Euler product representation of $F_{k, \beta}$ on $S_{1}$ is computed to be
\begin{align}\label{I}
  F_{k, \beta}(s,z)&=\prod_{p}\{1+\alpha_k(p)^{-z}(p^{-s+\delta z} \phi(p)^{-z}+p^{-2s+2\delta z} \phi(p^2)^{-z}+p^{-3s+3\delta z} \phi(p^3)^{-z}+\cdots) \}  \nonumber \\
    &=\prod_p\{1+p^{-s+\delta z}(p-1)^{-z} \alpha_k(p)^{-z}(1+p^{-s+\delta z-z}+p^{-2s+2\delta z-2z}+\cdots) \}  \nonumber\\
     &=\prod_p\{1+p^{-s+\delta z}(p-1)^{-z} \alpha_k(p)^{-z}(1-p^{-s+\delta z-z})^{-1} \}  \nonumber\\
     &=\prod_p\{1-p^{-s+\delta z-z}+p^{-s+\delta z}(p-1)^{-z}\alpha_{k}(p)^{-z} \}\zeta(s+z-\delta z)  \nonumber \\
     &=I_{k, \beta}(s, z)\zeta(s+z-\delta z).
 \end{align}
 \end{description}
\subsection{Upper Bounds for $I_{k, \beta}(s, z)$ and $G_{k, \beta}(s, z)$} \label{boundssec}
It is naturally desirable as well as required in our proofs to have explicit upper bounds for $I_{k, \beta}(s, z)$ and $G_{k, \beta}(s, z)$ in a region inspired by the holomorphicity of $F_{k,\beta}(s,z)$. The region we consider is: for $r\ge 0$, let
\begin{align*}
S_{r}^+=\{ (\sigma+it, \xi+i\eta) ; \      \sigma\geq 0, \    (1-\delta)\xi \geq 0;\         \sigma +(1-\delta)\xi >r\}.
\end{align*}


\begin{lemma}\label{bound12}
If $k$ is an even number, then the product defining $I_{k, \beta}(s, z)$ converges and defines an analytic function of $s$ and $z$ on $S_{0}^+$. Moreover,
\begin{align*}
I_{k, \beta}(s, z)  \ll \left\{\begin{array}{cc}
  \exp\left(\dfrac{M_k\log |\eta|}{\log\log |\eta|}\right)  &\quad \text{if }\ \sigma + (1-\delta)\xi \ge 1-\log\log |\eta|/\log |\eta| \text{ and }\ \log\log |\eta|\ge 10, \\ 
     1 & \quad \text{if }\ \sigma +(1-\delta)\xi \ge \frac{1}{2} \text{ and }\ |\eta|<\exp\exp{10}.
    \end{array}\right.\end{align*}
    \end{lemma}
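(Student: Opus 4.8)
The plan is to bound the logarithm of the Euler product $I_{k,\beta}(s,z)=\prod_p\{1-p^{-s+\delta z-z}+p^{-s+\delta z}(p-1)^{-z}\alpha_k(p)^{-z}\}$ on $S_0^+$ by splitting the primes into small and large ranges and using Mertens-type estimates. First I would rewrite the local factor by pulling out $p^{-s+\delta z}$: noting that $s-\delta z = (s+z-\delta z)-z$ and that $\sigma+(1-\delta)\xi$ controls the real part of $s+z-\delta z$, one sees that the ``dangerous'' exponent is $\Re(s-\delta z-z)=\sigma-\xi+(\delta-1)\xi\cdot 0\ldots$; more carefully, $\Re(-s+\delta z)=-\sigma+\delta\xi$ and $\Re(-s+\delta z-z)=-\sigma+\delta\xi-\xi=-(\sigma+(1-\delta)\xi)$, which is $<-r$ on $S_r^+$. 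So the term $p^{-s+\delta z-z}$ is $O(p^{-(\sigma+(1-\delta)\xi)})$ and contributes a convergent sum once $\sigma+(1-\delta)\xi>1$; the subtle term is $p^{-s+\delta z}(p-1)^{-z}\alpha_k(p)^{-z}=p^{-s+\delta z-z}(1-1/p)^{-z}\alpha_k(p)^{-z}$, whose modulus involves $|(1-1/p)^{-z}\alpha_k(p)^{-z}|=\exp(-\xi\log(1-1/p)-\xi\log\alpha_k(p))$. Using $\alpha_k(p)\in[1-2^{-k/2},1+2^{-k/2}]$ (Proposition \ref{bound}) and $\log(1-1/p)^{-1}\ll 1/p$, this factor is $\exp(O(\xi/p))$ for each fixed $p$, but the exponent $\eta$-dependence comes through $|(1-1/p)^{-i\eta}\alpha_k(p)^{-i\eta}|=1$ — so the imaginary part of $z$ contributes nothing to the modulus of an individual factor; the growth in $|\eta|$ must instead come from how many primes we must treat ``trivially.''

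The key mechanism for the $|\eta|$-dependence is the standard trick: for $p$ large relative to $|\eta|$, expand $\log$ of the local factor as a convergent series and estimate termwise (here the phases $p^{-it}$, $p^{i\eta\delta}$ etc.\ cause no harm because we take absolute values and $|p^{-s+\delta z-z}|=p^{-(\sigma+(1-\delta)\xi)}$); for $p\le |\eta|$ (say up to $p\le\exp(\log|\eta|/\log\log|\eta|)$ or a similar cutoff), bound each factor crudely by something like $1+Cp^{-(\sigma+(1-\delta)\xi)}(1-1/p)^{-\xi}$ and take the product. Writing $w=\sigma+(1-\delta)\xi$, on the first regime $w\ge 1-\log\log|\eta|/\log|\eta|$ we have $p^{-w}\le p^{-1}\cdot p^{\log\log|\eta|/\log|\eta|}\le p^{-1}|\eta|^{o(1)}$, and more precisely for $p\le|\eta|$, $p^{\log\log|\eta|/\log|\eta|}=\exp(\log p\cdot\log\log|\eta|/\log|\eta|)\le\exp(\log\log|\eta|)=\log|\eta|$. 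Then $\sum_{p\le P}p^{-w}\ll \log|\eta|\sum_{p\le P}1/p\ll\log|\eta|\cdot\log\log P\ll\log|\eta|\log\log|\eta|$ — which is too big. The correct cutoff is $P=\exp(\log|\eta|/\log\log|\eta|)$: then for $p\le P$, $p^{\log\log|\eta|/\log|\eta|}=\exp(\log p\cdot\log\log|\eta|/\log|\eta|)\le\exp(1)$, so $\sum_{p\le P}p^{-w}\ll\sum_{p\le P}1/p\ll\log\log P\ll\log\log|\eta|$, whence $\prod_{p\le P}(1+O(1/p))\ll\exp(O(\log\log|\eta|))\ll(\log|\eta|)^{O(1)}\ll\exp(M_k\log|\eta|/\log\log|\eta|)$; and for $p>P$ the tail $\sum_{p>P}p^{-w}$ with $w$ close to $1$ is controlled by $\sum_{p>P}1/p\cdot p^{(1-w)}$, and since $1-w\le\log\log|\eta|/\log|\eta|=1/\log P$, we get $\sum_{n>P}n^{-1+1/\log P}\ll$ something like $\log P\ll\log|\eta|/\log\log|\eta|$, giving the stated bound $\exp(M_k\log|\eta|/\log\log|\eta|)$.

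For the second regime, $\sigma+(1-\delta)\xi\ge 1/2$ and $|\eta|<\exp\exp 10$, there is no growing parameter, so one simply needs uniform convergence and a uniform bound by an absolute constant: here the local factor is $1+O(p^{-1/2}(1-1/p)^{-\xi})$, and since $|\eta|$ is bounded and $\xi\ge 0$ with $(1-\delta)\xi$ bounded by $\sigma+(1-\delta)\xi$ which could still be large — wait, here I would need to also observe that large $\xi$ is fine because $(1-1/p)^{-\xi}=\exp(\xi\log(1-1/p)^{-1})$ and $\xi\log(1-1/p)^{-1}\ll\xi/p$, but $\xi$ itself is unbounded; however $p^{-(\sigma+(1-\delta)\xi)}(1-1/p)^{-\xi}\le p^{-\sigma}p^{-(1-\delta)\xi}(1-1/p)^{-\xi}$ and for $\delta<1$ fixed, $p^{-(1-\delta)\xi}(1-1/p)^{-\xi}=(p^{-(1-\delta)}(1-1/p)^{-1})^{\xi}\to 0$ geometrically in $\xi$ for $p\ge 2$ once $\delta$ is bounded away from $1$ and $p$ exceeds a threshold, with the finitely many small primes handled directly. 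I would present the argument cleanly by first establishing the identity for $\log$ of the local factor, then doing the dyadic/cutoff split; the main obstacle I expect is getting the cutoff $P$ and the tail estimate calibrated exactly right so that \emph{both} the small-prime product and the large-prime tail come out as $\exp(O(\log|\eta|/\log\log|\eta|))$ rather than $\exp(O(\log|\eta|\log\log|\eta|/\log|\eta|))$ — i.e.\ tracking the interplay between how close $w$ is allowed to be to $1$ and where the cutoff sits, which is exactly the mechanism behind the classical $\exp(c\log t/\log\log t)$ bounds for $1/\zeta$ near the $1$-line. The dependence of the implied constants on $\delta$ (through needing $\delta$ bounded away from $1$ to control large $\xi$) and on $k$ (through the range of $\alpha_k(p)$) should be made explicit.
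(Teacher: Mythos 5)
Your plan for the small-prime range is workable (indeed, the cutoff $P=\exp(\log|\eta|/\log\log|\eta|)$ would give the cleaner bound $\exp(O(\log\log|\eta|))$ there), but the tail estimate as you wrote it is incorrect: for $w=\sigma+(1-\delta)\xi$ slightly below $1$, the series $\sum_{p>P}p^{-w}$ \emph{diverges}, so one cannot conclude $\sum_{p>P}p^{-w}\ll\log P$. The underlying issue is that you bound each local factor by $1+O(p^{-w})$ by taking absolute values of the two constituent terms $p^{-s+\delta z-z}$ and $p^{-s+\delta z}(p-1)^{-z}\alpha_k(p)^{-z}$ separately, which throws away the cancellation between them. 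The paper's proof exploits precisely this cancellation via a mean-value estimate: writing the local factor as $1+p^{-s+\delta z}\bigl(\alpha_k(p)^{-z}(p-1)^{-z}-p^{-z}\bigr)$ and observing
\[\bigl|\alpha_k(p)^{-z}(p-1)^{-z}-p^{-z}\bigr|=\Bigl|z\int_{\alpha_k(p)(p-1)}^{p}u^{-z-1}\,du\Bigr|\ll_k |z|\,(p-1)^{-\xi-1},\]
which gains the decisive extra factor $(p-1)^{-1}$ at the cost of a harmless $|z|$. That is the missing ingredient. With it the local factor is $1+O(|z|\,p^{-w-1})$ for large $p$, and splitting the product at $p=|\eta|$ (not at your smaller $P$) makes the tail $\sum_{p>|\eta|}|z|\,p^{-w-1}$ an $O(1)$ quantity because $|z|\ll|\eta|<p$. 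Your choice $P=\exp(\log|\eta|/\log\log|\eta|)$ would \emph{not} work even with this sharper estimate, since $|z|/P$ is then still enormous; the calibration forces $P\asymp|\eta|$, and correspondingly the small-prime product must be estimated more carefully. The paper does this by writing $\sum_{p\le|\eta|}(p-1)^{-w}$ as (essentially) $\int v^{-1+\varepsilon}\,dv/\log v$ with $\varepsilon=1-w$, substituting $u=\varepsilon\log v$, and splitting the resulting $\int e^{u}/u\,du$ at $1$ and at $\tfrac12\varepsilon\log|\eta|$; the optimal $\varepsilon=\log\log|\eta|/\log|\eta|$ produces the stated $\exp(M_k\log|\eta|/\log\log|\eta|)$. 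In short, your proposal has the right broad shape (Mertens-type sums plus the $\exp(c\log t/\log\log t)$ optimization) but is missing the cancellation that makes the large-prime tail converge, and this is not cosmetic: without it the argument fails for every $w<1$.
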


The estimates are valid independent of $t$.

\begin{proof}


Using the lower bound for $\alpha_{k}(p)$ in Proposition \eqref{bound}, for $\xi>0$,  one trivially has
\begin{align}\label{eqn:2.0}
     |\alpha_{k}(p)^{-z}(p-1)^{-z}-p^{-z}| \ll_k (p-1)^{-\xi}. 
\end{align}
The lower bound for $\alpha_{k}(p)$ in Proposition \eqref{bound} yields the estimate
\begin{align}\label{eqn:2.1}
     |\alpha_{k}(p)^{-z}(p-1)^{-z}-p^{-z}| =&\left|z\int_{\alpha_{k}(p)(p-1)}^p{u^{-z-1}}\,du\right| \ll_k  |z|(p-1)^{-\xi-1}.          \end{align} 
For  $\log \log |\eta|\geq10$ and  $\sigma +(1-\delta)\xi \ge 1- \varepsilon $, we split the product defined by $I_{k, \beta}(s, z)$ into two parts
 \begin{align}\label{eq2}
 |  I_{k, \beta}(s, z) |= |\prod_{p\leq |\eta|}|.|\prod_{p>|\eta|}|.
 \end{align} 
 The finite part can be estimated as follows:
         \begin{align}\label{eq3}
      |\prod_{p\leq |\eta|}| 
      &=|\prod_{p\leq \eta}\{1+p^{-s+\delta z}( {( p-1)^{-z}\alpha_{k}(p)^{-z}} -{p^{-z}})\}| \leq\prod_{p\leq\eta}\{1+ A_k(p-1)^{-\sigma-(1-\delta)\xi}\} \nonumber \\
      &\leq\exp\left(\sum_{p\leq\eta}\{A_k(p-1)^{-\sigma-(1-\delta)\xi}\}\right) \nonumber \leq\exp\left(A_k+A_k\int_{3/2}^{\eta}v^{-\sigma-(1-\delta)\xi}d\pi (v)\right)  \nonumber \\
  &\leq\exp\left(A_k+A_k\int_{3/2}^{\eta}v^{-1+\varepsilon}\frac{dv}{\log v}\right), 
    \end{align}
    where $A_k=\frac{2^{k+2}}{(2^{k/2}-1)^2}.$
   Setting $w:=\varepsilon\log v$ into \eqref{eq3}, it follows that
    \begin{align}\label{Mi}
        |\prod_{p\leq |\eta|}|  &\leq  \exp\left(A_k+A_k\int_{\varepsilon \log(\frac{3}{2})}^{\varepsilon \log(|\eta|)}\frac{e^w}{w}\,dw\right)        \nonumber \\
         &= \exp\left(A_k+A_k\left(\int_{\varepsilon \log(\frac{3}{2})}^{1} \frac{e^w}{w}\,dw+ \int_1^{\frac{\varepsilon \log |\eta|}{2}}  \frac{e^w}{w}\,dw+       \int_{ \frac{\varepsilon \log |\eta|}{2}}^{\varepsilon \log|\eta|} \frac{e^w}{w}\,dw\right)\right) \nonumber \\
         &=\exp(A_k+A_k(I_1+I_2+I_3)).
        \end{align}
        The first integral is estimated as
 \begin{align}\label{eq11}
   I_1=\int_{\varepsilon \log(\frac{3}{2})}^{1} \frac{e^w}{w}\,dw
   \leq\frac{e}{\varepsilon \log\frac{3}{2}}\int_{\varepsilon \log(\frac{3}{2})}^{1}\,dw
   \leq\frac{e}{\varepsilon \log\frac{3}{2}}
   \ll\frac{1}{\varepsilon}.  \end{align}
Next, simple use of integration by parts yields an estimate for the second integral in \eqref{Mi} 
 \begin{align}\label{eq12}
     I_2=\int_1^{ (1/2)\varepsilon \log |\eta| }  \frac{e^w}{w}\,dw
   \leq e^{(1/2)\varepsilon \log |\eta
    |}.\end{align}
For the third integral, we have
\begin{align}\label{eq13} 
    |I_3|=\int_{(1/2)\varepsilon \log |\eta|}^{\varepsilon \log |\eta|}\frac{e^w}{w}\ dw  \leq \frac{e^{\varepsilon \log |\eta |}}{\varepsilon \log |\eta|}+e^{(1/2)\varepsilon \log |\eta |}\frac{1}{\varepsilon \log |\eta|}
    \ll \frac{e^{\varepsilon \log |\eta |}}{\varepsilon \log |\eta|}.\end{align}
Selecting $\varepsilon=\log\log |\eta|/\log |\eta|$ optimally from \eqref{eq11}, \eqref{eq12} and \eqref{eq13}, and using it in \eqref{Mi}, we obtain
 \[ |\prod_{p\leq |\eta|}|\ll \exp\left(\frac{A_k\log|\eta|}{\log\log|\eta|}\right).   \]
Next, we consider the infinite part in \eqref{eq2}, with $B_k=2\left(\frac{2^{k/2}}{2^{k/2}-1} \right)^3$
 \begin{align*}
      |\prod_{p> |\eta|}| 
      &\leq |\prod_{p> |\eta|}\{1+p^{-s+\delta z}  (  {\left( p-1\right)^z \alpha_{k}(p)^{-z}} -{p^{-z}}) \} |\leq \prod_{p> |\eta|}\{1+B_k|z|p^{-\sigma+\delta \xi}(p-1)^{-\xi-1} \}\\
       &\leq \prod_{p> |\eta|}\{1+C_k|z|(p-1)^{-\sigma-(1-\delta)\xi-1}\}\leq \exp\left(\sum_{p>|\eta|} C_k|z|(p-1)^{-\sigma-(1-\delta)\xi-1} \right)\\
       &\leq \exp \left(C_k|z|(|\eta|-1)^{-2+\varepsilon}+C_k|z|\int_{|\eta|}^{\infty}v^{-2+\varepsilon}d\pi (v)\right  )\leq \exp\left(D_k|z|\int_{|\eta|}^{\infty}v^{-2+\varepsilon}\frac{dv}{\log v} \right  )\\
      &\leq \exp\left( \frac{D_k|z||\eta|^{\varepsilon-1}}{(1-\varepsilon)\log |\eta|}  \right)=O(1),
 \end{align*}
where $C_k=4B_k$, and $D_k=2C_k$. For  $\log \log |\eta|<10$ and  $\sigma +(1-\delta)\xi \ge 1/2 $, we have
 \[|  I_{k, \beta}(s, z) |\leq\exp\left(\sum_{p} C_k|z|(p-1)^{-\sigma-(1-\delta)\xi-1} \right)=O(1).\]
 For any $\varepsilon>0,\ M<\infty$, on each set $S_\varepsilon^+\cap \{z:|z|\leq M\}$ the product converges uniformly since
 \begin{align*}
     &\left|\log \left(\prod_{a<p<b}\{1+p^{-s+\delta z}( {( p-1)^{-z}\alpha_k(p)^{-z}} -{p^{-z}})\}\right)\right| \ll\sum_{a<p<b}\left(\frac{1}{p^{\sigma-\delta \xi}} \left|\alpha_k(p)^{-z} (p-1)^{-z} - p^{-z}  \right|\right)\\
     &\ll\sum_{a<p<b}  \left(\frac{B_k|\xi|}{(p-1)^{\xi+\sigma-\delta \xi+1}}  \right)\rightarrow 0
     \end{align*}
    uniformly as $a,b\rightarrow \infty$ and $(s,z) \in S_\varepsilon^+\cap \{z:|z|\leq M\}$. Thus the product defines an analytic function of $s$ and $z$ on $S_\varepsilon^+$.
 
 \end{proof}
Similarly, one can deduce upper bounds for the function $G_{k, \beta}(s, z)$, and we omit the proof here.
 \begin{lemma}\label{bound11}
If $k$ is an odd number, then the product defining $G_{k, \beta}(s, z)$ converges and defines an analytic function of $s$ and $z$ on $S_{0}^+$. Moreover
\begin{align*}
 G_{k, \beta}(s, z) \ll \left\{\begin{array}{cc}
  \exp\left(\dfrac{3\log |\eta|}{\log\log |\eta|}\right) & \quad \text{if }\sigma +(1-\delta)\xi \ge 1- \frac{\log\log |\eta|}{\log |\eta| } \text{ and }\log\log |\eta|\ge 10, \\
     1   & \quad \text{if }\ \sigma +(1-\delta)\xi \ge \frac{1}{2}\text{ and }|\eta|<\exp\exp 10.
    \end{array}\right.\end{align*}
    \end{lemma}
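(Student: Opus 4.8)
The goal is to establish Lemma~\ref{bound11}, i.e.\ the analyticity of $G_{k,\beta}(s,z)$ on $S_0^+$ together with the two stated growth estimates. The plan is to mirror the argument already carried out for $I_{k,\beta}(s,z)$ in Lemma~\ref{bound12}, exploiting the fact that the odd-$k$ case is structurally simpler: here the local factor of $G_{k,\beta}$ is
\[
1+p^{-s+\delta z}\bigl((p-1)^{-z}-p^{-z}\bigr),
\]
so the role played by $\alpha_k(p)^{-z}(p-1)^{-z}-p^{-z}$ in the even case is played by the cleaner quantity $(p-1)^{-z}-p^{-z}$, with \emph{no} dependence on $k$ in the implied constants. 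Accordingly, the first step is to record the two elementary bounds replacing \eqref{eqn:2.0} and \eqref{eqn:2.1}: for $\xi>0$, writing $(p-1)^{-z}-p^{-z}=z\int_{p-1}^p u^{-z-1}\,du$, one gets $|(p-1)^{-z}-p^{-z}|\ll (p-1)^{-\xi}$ and also $|(p-1)^{-z}-p^{-z}|\ll |z|(p-1)^{-\xi-1}$, now with \emph{absolute} implied constants (so $A_k,B_k,\dots$ are replaced by numerical constants — this is where the "$3$" in the exponent will ultimately come from).

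Next I would run the same split as in \eqref{eq2}: for $\log\log|\eta|\ge 10$ and $\sigma+(1-\delta)\xi\ge 1-\log\log|\eta|/\log|\eta|$, write $|G_{k,\beta}(s,z)| = |\prod_{p\le|\eta|}|\cdot|\prod_{p>|\eta|}|$. For the finite product I would use the first bound together with $1+x\le e^x$ and Mertens/Chebyshev estimates (Proposition~\ref{prime}) to pass to $\exp\bigl(\text{const}+\text{const}\int_{3/2}^{|\eta|} v^{-\sigma-(1-\delta)\xi}\,dv/\log v\bigr)$, substitute $w=\varepsilon\log v$ with $\varepsilon = \log\log|\eta|/\log|\eta|$ as in \eqref{Mi}, and split the resulting $\int e^w/w\,dw$ into the three ranges $I_1,I_2,I_3$ exactly as in \eqref{eq11}--\eqref{eq13}; the optimization then yields $\prod_{p\le|\eta|}\ll\exp\bigl(3\log|\eta|/\log\log|\eta|\bigr)$, the constant being small enough because the implied constants are now absolute. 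For the infinite product I would use the second bound to get $\prod_{p>|\eta|}\le\exp\bigl(\text{const}\,|z|\int_{|\eta|}^\infty v^{-2+\varepsilon}\,dv/\log v\bigr)=\exp\bigl(O(|z||\eta|^{\varepsilon-1}/\log|\eta|)\bigr)=O(1)$. The case $\log\log|\eta|<10$, $\sigma+(1-\delta)\xi\ge 1/2$ is handled in one line by bounding the whole product by $\exp\bigl(\text{const}\,|z|\sum_p (p-1)^{-3/2}\bigr)=O(1)$, since then $\sigma+(1-\delta)\xi+1\ge 3/2$.

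Finally, for the analyticity claim, I would check that on each set $S_\varepsilon^+\cap\{|z|\le M\}$ the product converges uniformly: the tail sum $\sum_{a<p<b}\bigl|p^{-s+\delta z}\bigr|\,\bigl|(p-1)^{-z}-p^{-z}\bigr|\ll \sum_{a<p<b} |z|(p-1)^{-\xi-\sigma+\delta\xi-1} = \sum_{a<p<b}|z|(p-1)^{-(\sigma+(1-\delta)\xi)-1}\to 0$ as $a,b\to\infty$, uniformly for $(s,z)$ in that set and independently of $t$; since each partial product is entire, the limit is analytic there, and letting $\varepsilon\downarrow 0$ gives analyticity on $S_0^+$. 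I do not expect any genuine obstacle here — the only point requiring a little care is bookkeeping the numerical constants so that they collapse to the stated "$3$" in $\exp(3\log|\eta|/\log\log|\eta|)$, which is why the excerpt is content to "omit the proof": it is the $k$-independent specialization of the Lemma~\ref{bound12} argument, with $\alpha_k(p)\equiv 1$ and all the $A_k,B_k,C_k,D_k,M_k$ replaced by explicit absolute constants.
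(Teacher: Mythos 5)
Your proposal is correct and follows precisely the route the paper intends: it specializes the argument of Lemma~\ref{bound12} to the odd case by replacing $\alpha_k(p)^{-z}(p-1)^{-z}-p^{-z}$ with $(p-1)^{-z}-p^{-z}$, reusing the integral representation, the finite/infinite split at $p=|\eta|$, the substitution $w=\varepsilon\log v$ with $\varepsilon=\log\log|\eta|/\log|\eta|$, and the uniform-convergence check for analyticity. This matches the paper, which explicitly states that the proof is obtained "similarly" and omits it.
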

Next, we deal with the particular value of $s=1-(1-\delta)z$, in which case we obtain the following. 
\begin{lemma}\label{bound13}
For $\xi>0$ and $(s, z) \in  S_{1-\varepsilon}^+$, there exists a constant $\kappa$ such that 
\begin{align*}
I_{k, \beta}(1-(1-\delta)z, z))\ll  \exp \left( \xi \left( \log c_k+ 4 \log \log \xi+4\log \frac{\kappa}{e} \right)\right),
\end{align*}
with $c_k=(1-\frac{1}{2^{k/2}} )^{-1}$. 
\end{lemma}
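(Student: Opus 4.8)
\textbf{Proof proposal for Lemma \ref{bound13}.}

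The plan is to take the Euler product for $I_{k,\beta}(s,z)$ from \eqref{I}, namely $I_{k,\beta}(s,z)=\prod_p\{1-p^{-s+\delta z-z}+p^{-s+\delta z}(p-1)^{-z}\alpha_k(p)^{-z}\}$, and specialize at $s=1-(1-\delta)z$. At that value one has $-s+\delta z-z=-s-(1-\delta)z=-1$ and $-s+\delta z=-1+z$, so the general factor collapses nicely to $1-p^{-1}+p^{-1+z}(p-1)^{-z}\alpha_k(p)^{-z}=1-p^{-1}+p^{-1}\left(\frac{p}{p-1}\right)^z\alpha_k(p)^{-z}$. I would rewrite this factor as $1+\frac{1}{p}\left(\left(\frac{p}{p-1}\right)^z\alpha_k(p)^{-z}-1\right)$, take logarithms, and estimate $\log I_{k,\beta}(1-(1-\delta)z,z)\ll\sum_p\frac{1}{p}\left|\left(\frac{p}{p-1}\right)^z\alpha_k(p)^{-z}-1\right|$, using that $|\log(1+w)|\le 2|w|$ when $|w|$ is small (and handling finitely many small primes separately, where each factor is bounded by a constant depending on $k$).

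The next step is to bound $\left|\left(\frac{p}{p-1}\right)^z\alpha_k(p)^{-z}-1\right|$. Writing $\frac{p}{p-1}\alpha_k(p)^{-1}=:\lambda_p$, this is $|\lambda_p^z-1|=|e^{z\log\lambda_p}-1|$. By Proposition \ref{bound}, $\left(1-2^{-k/2}\right)\le\alpha_k(p)\le\left(1+2^{-k/2}\right)$, so $\lambda_p$ lies between $\frac{p}{p-1}c_k^{-1}$ and $\frac{p}{p-1}c_k$ roughly, and $\log\lambda_p=\log\frac{p}{p-1}-\log\alpha_k(p)=O_k(1/p)+O_k(1)$ — in particular $|\log\lambda_p|\le\log c_k+O(1/p)$ is bounded. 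I would then use the crude bound $|e^{z\log\lambda_p}-1|\le e^{\xi|\log\lambda_p|}|\log\lambda_p|\cdot(\text{const})$ valid since $\xi>0$ (writing $z=\xi+i\eta$, $|e^{z\log\lambda_p}|=e^{\xi\log\lambda_p}\le e^{\xi|\log\lambda_p|}$). Splitting the prime sum at $p\le\exp(\text{something})$ versus $p$ large: for large $p$, $|\log\lambda_p|\le\log c_k+C/p$ so $e^{\xi|\log\lambda_p|}\le c_k^\xi e^{C\xi/p}$ and the tail $\sum_{p\text{ large}}\frac{1}{p}\cdot c_k^\xi\cdot\frac{C}{p}$ converges; the main contribution is $c_k^\xi\sum_{p\le y}\frac{1}{p}$ for an appropriate cutoff $y$. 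The cutoff should be chosen so that $c_k^\xi$ does not already overwhelm — but since every term carries the factor $c_k^\xi=e^{\xi\log c_k}$, that produces the $\exp(\xi\log c_k)$ in the claimed bound, and by Proposition \ref{prime}, $\sum_{p\le y}\frac1p=\log\log y+b_0+O(1/\log y)$.

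The delicate part — and the main obstacle — is producing the precise shape $\exp\left(\xi\left(\log c_k+4\log\log\xi+4\log(\kappa/e)\right)\right)$, i.e. getting the exponent $4$ on $\log\log\xi$ and isolating the constant $\kappa$. This suggests the argument is not the crude one above but rather: bound $|\lambda_p^z-1|$ more carefully, perhaps via $|\lambda_p^z-1|\le|z||\log\lambda_p|\,e^{\xi|\log\lambda_p|}$ when $|\log\lambda_p|$ is small and separately for moderate primes, and choose the splitting point $p\le P$ with $\log P\asymp\xi$ (so that $c_k^\xi$-type growth is balanced against the number of prime factors). With $\log P\asymp\kappa\xi$ one gets $\sum_{p\le P}\frac1p\approx\log\log P\approx\log(\kappa\xi)=\log\kappa+\log\xi$, and iterating/optimizing the choice of the auxiliary parameter (much as $\varepsilon$ was optimized in Lemma \ref{bound12}) is what forces the $4\log\log\xi$ and the $4\log(\kappa/e)$ terms, the factor $4$ presumably matching the fourth-power normalization $\kappa^4$ appearing in Theorems \ref{thm3}–\ref{thm4}. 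So concretely I would: (i) reduce to the prime sum via logarithms; (ii) insert the estimate for $|\lambda_p^z-1|$ split by size of $p$; (iii) choose the cutoff as a function of $\xi$ and apply Mertens (Proposition \ref{prime}); (iv) collect the pieces, absorbing lower-order terms into the constant $\kappa$, to reach the stated exponential bound. The bookkeeping in step (iii)–(iv) — tracking exactly which constants land inside the $\exp(\xi(\cdots))$ — is where all the real work lies; the analytic input is entirely elementary (Mertens plus $|e^w-1|\le|w|e^{|w|}$).
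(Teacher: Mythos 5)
Your overall strategy (split the primes at a cutoff, estimate each Euler factor, sum and apply Mertens) is the same as the paper's, but two of the key details you propose would not reproduce the stated bound, and one is wrong in a way that would sink the argument.

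First, the cutoff. The paper splits the product at $p\le\xi$ versus $p>\xi$, and it is the cutoff at $\xi$ itself that yields $\sum_{p\le\xi}1/p=\log\log\xi+b_0+O(1/\log\xi)$ and hence $\log\log\xi$ in the exponent, with $\kappa$ defined simply by $b_0=\log(\kappa/e)$. Your proposed cutoff $\log P\asymp\kappa\xi$ gives $\sum_{p\le P}1/p\approx\log\log P\approx\log\xi+\log\kappa$, which is the wrong shape: you would end up with $\exp\bigl(\xi(\text{const}\cdot\log\xi)\bigr)$ rather than $\exp\bigl(\xi(\text{const}\cdot\log\log\xi)\bigr)$, a substantially weaker bound than the lemma states.

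Second, and more seriously, the reduction $\log I_{k,\beta}\ll\sum_p\frac1p|\lambda_p^z-1|$ via $|\log(1+w)|\le 2|w|$ breaks for small primes, and ``small'' here is not a fixed finite set: for $p$ up to roughly $\xi/\log\xi$, the quantity $w_p=\frac1p\bigl((\frac{p}{p-1})^z\alpha_k(p)^{-z}-1\bigr)$ has $|w_p|$ of exponential size in $\xi$ (e.g.\ at $p=2$ the factor is $(1+2^z)/2$, of size $\approx 2^{\xi-1}$). Your proposal to handle ``finitely many small primes separately, each factor bounded by a constant depending on $k$'' therefore fails: these factors are not bounded by constants, they grow like $c^\xi$, and there are $\asymp\pi(\xi)$ of them. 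The paper sidesteps this entirely for $p\le\xi$: it upper-bounds each factor by $(\frac{p}{p-1})^{\xi}\alpha_k(p)^{-\xi}\bigl\{(1-\tfrac1p)\alpha_k(p)^{\xi}+\tfrac1p\bigr\}\le(\tfrac{2p}{p-1})^{\xi}\alpha_k(p)^{-\xi}$ and multiplies these together, so each factor contributes an amount $\asymp\xi\cdot\log(\cdots)$ to the log, which is then controlled by Mertens; there is no step where one needs an individual factor to be $1+o(1)$.

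Finally, the factor $4$ in the exponent is not derived from the $\kappa^4$ in Theorems \ref{thm3}--\ref{thm4}; it is the other way around. In the paper, $4$ arises from the elementary bound $2/(p-1)\le 4/p$ when passing from $\sum_{p\le\xi}\tfrac{2\xi}{p-1}$ to $4\xi\sum_{p\le\xi}\tfrac1p$. The $\kappa^4$ and $c_k$ in the threshold for $y$ in Theorems \ref{thm3}--\ref{thm4} are downstream consequences of the $4\log\log d$ and $\log c_k$ from this lemma when $d$ is optimized in Lemma \ref{Main00}, so you have the dependency backwards. To fix your proof you should use the paper's cutoff $p\le\xi$, give up the $|\log(1+w)|\le 2|w|$ reduction for $p\le\xi$, and instead bound each such factor directly by $(\tfrac{2p}{p-1})^{\xi}\alpha_k(p)^{-\xi}$ before exponentiating and applying Mertens.
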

\begin{proof}
As before, we split the product $I_{k, \beta}(s, z)$ in two parts $ | I_{k, \beta}(s, z) |= |\prod_{p\leq \xi}|.|\prod_{p>\xi}|$. The first part can be estimated as the following:
\begin{align*}
|\prod_{p\leq \xi}| & \ll  \prod_{p\leq \xi}\{1-\frac{1}{p}+\frac{1}{p}\left(\frac{p}{p-1}\right)^{\xi} \alpha_k(p)^{-\xi}          \}\ll  \prod_{p\leq \xi} \left(\frac{p}{p-1}\right)^{\xi}  \alpha_k(p)^{-\xi}   \left\{\left(1-\frac{1}{p}\right) \alpha_k(p)^{\xi}   +\frac{1}{p}        \right\}\\
& \ll  \prod_{p\leq \xi} \left(\frac{p}{p-1}\right)^{\xi}  \alpha_k(p)^{-\xi}   \left\{\left(1-\frac{1}{p}\right) 2^{\xi}   +\frac{ 2^{\xi}  }{p}        \right\}\ll  c_k^{\xi} \prod_{p\leq \xi} \left(\frac{2p}{p-1}\right)^{\xi},
\end{align*}
where in the last step, we employed bounds for $\alpha_k(p)$ in Proposition \ref{bound}.  
Using Merten's estimate and \eqref{Merten1} from Proposition \ref{prime} with $b_0=\log \frac{\kappa}{e}$, we have
\begin{align}\label{bound1}
|\prod_{p\leq \xi}| &\ll  c_k^{\xi} \exp \left(\sum_{p\leq \xi} \frac{2\xi}{(p-1)}\right)       \nonumber \ll  \exp \left( \xi \left( \log c_k+ 4\sum_{p\leq \xi} \frac{1}{p} \right)\right)         \nonumber \\
&\ll  \exp \left( \xi \left( \log c_k+ 4 \log \log \xi+4\log \frac{\kappa}{e} \right)\right).
\end{align}
For the infinite part, we have 
\begin{align}\label{bound2}
|\prod_{p> \xi}| &\ll  \prod_{p> \xi}\left(1-\frac{1}{p}+\frac{1}{p}\left(\frac{p}{p-1}\right)^{\xi} \alpha_k(p)^{-\xi} \right)      \nonumber \\
&= \prod_{p> \xi}\left(1-\frac{1}{p}+\frac{1}{p}\left(1+\frac{1}{p-1}\right)^{\xi} \left(1-\frac{(-1)^{\frac{k(p-1)}{4}}}{p^{\frac{k}{2}}}\right)^{-\xi} \right ) \nonumber \\
&\ll \prod_{p> \xi} \left(1+O\left(\xi p^{-2} \right) \right).
\end{align}
This infinite product is bounded. The result follows by combining \eqref{bound1} and \eqref{bound2}.
\end{proof}
Similar bounds can be obtained for $G_{k, \beta}(1-(1-\delta)z, z)$, and we omit the proof. 
\begin{lemma}\label{bound14}
For $\xi>0$ and $(s, z) \in  S_{1-\varepsilon}^+$, there exists a constant $\kappa$ such that 
\begin{align*}
 G_{k, \beta}(1-(1-\delta)z, z)\ll  \exp \left( \xi \left( 2 \log \log \xi+2\log \frac{\kappa}{e} \right)\right).
\end{align*}
\end{lemma}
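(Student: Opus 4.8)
\textbf{Proof plan for Lemma \ref{bound14}.}

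The plan is to mimic the proof of Lemma \ref{bound13} almost verbatim, simply dropping the factor $\alpha_k(p)^{-\xi}$ and the attendant constant $c_k$, since in the odd case the relevant Euler factor of $G_{k,\beta}(1-(1-\delta)z,z)$ is
\[
1-\frac{1}{p}+\frac{1}{p}\left(\frac{p}{p-1}\right)^{\xi},
\]
which is exactly the $\alpha_k\equiv 1$ specialization of the even-case factor. First I would split the product as $|G_{k,\beta}(1-(1-\delta)z,z)|=|\prod_{p\le \xi}|\cdot|\prod_{p>\xi}|$, exactly as in the previous lemma.

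For the finite part, I would bound
\[
\left|\prod_{p\le\xi}\right|\ll\prod_{p\le\xi}\left(1-\frac{1}{p}+\frac{1}{p}\left(\frac{p}{p-1}\right)^{\xi}\right)\ll\prod_{p\le\xi}\left(\frac{p}{p-1}\right)^{\xi}\left\{\left(1-\frac{1}{p}\right)+\frac{1}{p}\right\}=\prod_{p\le\xi}\left(\frac{p}{p-1}\right)^{\xi},
\]
using that the brace equals $1$ (no extra $2^{\xi}$ is needed here because there is no $\alpha_k$ factor to compensate for). Then I would pass to the exponential, $\prod_{p\le\xi}\left(\frac{p}{p-1}\right)^{\xi}\ll\exp\left(\xi\sum_{p\le\xi}\frac{2}{p-1}\right)\ll\exp\left(\xi\cdot 2\sum_{p\le\xi}\frac{1}{p}\right)$ after absorbing the shift from $p-1$ to $p$, and invoke Merten's estimate \eqref{Merten1} from Proposition \ref{prime} with $b_0=\log\frac{\kappa}{e}$ to get
\[
\left|\prod_{p\le\xi}\right|\ll\exp\left(\xi\left(2\log\log\xi+2\log\frac{\kappa}{e}\right)\right),
\]
which is precisely the claimed bound. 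The appearance of the constant $2$ rather than $4$ traces back directly to there being no $c_k^{\xi}$ term and no doubling from $\alpha_k(p)^{-\xi}\le 2^{\xi}$.

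For the infinite part, I would observe $\left(\frac{p}{p-1}\right)^{\xi}=\left(1+\frac{1}{p-1}\right)^{\xi}=1+O(\xi p^{-1})$ for $p>\xi$, whence the factor $1-\frac{1}{p}+\frac{1}{p}\left(\frac{p}{p-1}\right)^{\xi}=1+O(\xi p^{-2})$, so that $\prod_{p>\xi}\left(1+O(\xi p^{-2})\right)$ converges and is $O(1)$ (the tail sum $\xi\sum_{p>\xi}p^{-2}\ll 1$). Combining the two estimates finishes the proof. Honestly I expect no real obstacle here: the only subtlety is the innocuous bookkeeping of replacing $p-1$ by $p$ in the Merten sum, and checking that the constant works out to $2$; this is entirely routine, which is presumably why the authors say "we omit the proof."
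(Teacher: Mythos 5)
Your proof is correct and takes essentially the same route the authors have in mind: bound each Euler factor of $G_{k,\beta}(1-(1-\delta)z,z)$ by $\left(\frac{p}{p-1}\right)^{\xi}$ for $p\le\xi$, apply Mertens with $b_0=\log\frac{\kappa}{e}$, and note the $p>\xi$ tail is $O(1)$; this is the odd-$k$ specialization of the proof of Lemma \ref{bound13}, as the phrase ``we omit the proof'' signals. One small slip to fix: the chain $\prod_{p\le\xi}\left(\frac{p}{p-1}\right)^{\xi}\ll\exp\left(\xi\sum_{p\le\xi}\frac{2}{p-1}\right)\ll\exp\left(2\xi\sum_{p\le\xi}\frac{1}{p}\right)$ is reversed in its second step, since $\frac{1}{p-1}\ge\frac{1}{p}$; the clean version is $\left(\frac{p}{p-1}\right)^{\xi}\le\exp\left(\frac{\xi}{p-1}\right)\le\exp\left(\frac{2\xi}{p}\right)$, which gives $\prod_{p\le\xi}\left(\frac{p}{p-1}\right)^{\xi}\le\exp\left(2\xi\sum_{p\le\xi}\frac{1}{p}\right)$ directly, and then the Merten step and the constant $2$ fall out exactly as you say.
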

In the half plane $\{z: \ 0<\Re(z)\leq \lambda \}$, $I_{k, \beta}(1-z+\delta z, z)$ and $G_{k, \beta}(1-z+\delta z, z)$ are bounded by a constant. 
\begin{lemma}
For any $\lambda>0$, on the half plane $\{z: \ 0<\Re(z)\leq \lambda \}$,
\[|  I_{k, \beta}(1-z+\delta z, z)|=O(1)\quad \text{and} \quad |  G_{k, \beta}(1-z+\delta z, z)|=O(1). \]
\end{lemma}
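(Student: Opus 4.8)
The plan is to bound the two Euler products uniformly on the vertical strip $\{z:\ 0<\Re(z)\le\lambda\}$, where now $s=1-z+\delta z=1-(1-\delta)z$, so that $\sigma+(1-\delta)\xi=\Re(1-(1-\delta)z)+(1-\delta)\xi=1$ identically. The key point is that the parameter $\xi=\Re(z)$ is now restricted to the \emph{bounded} interval $(0,\lambda]$, so all the quantities $c_k^{\xi}$, $2^{\xi}$, $(\kappa/e)^{\xi}$, etc., that appeared in Lemma \ref{bound13} and Lemma \ref{bound14} are themselves bounded by constants depending only on $\lambda$ (and $k$). Thus the only thing one must genuinely control is the behaviour of the product as $p\to\infty$, i.e.\ uniform convergence of the tail, which makes the value a continuous function of $z$ on the compact-in-$\Re$ strip.

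First I would treat the even case $I_{k,\beta}(1-(1-\delta)z,z)$. Writing each local factor as in \eqref{I} as $1-p^{-s+\delta z-z}+p^{-s+\delta z}(p-1)^{-z}\alpha_k(p)^{-z}$ and using $s=1-(1-\delta)z$, the exponents simplify: $-s+\delta z-z=-1$ and $-s+\delta z=-1+z$, so the $p$-th factor is exactly $1-\tfrac1p+\tfrac1p\left(\tfrac{p}{p-1}\right)^{z}\alpha_k(p)^{-z}$, matching the form used in Lemma \ref{bound13}. For $p>|z|$ (hence for all but finitely many $p$, with the finite set depending only on $\lambda$) one expands as in \eqref{bound2}: since $\left(\tfrac{p}{p-1}\right)^{z}=1+O(|z|/p)$ and $\alpha_k(p)^{-z}=1+O_k(|z|p^{-k/2})$ uniformly for $|z|\le\lambda$ (using Proposition \ref{bound} to keep $\alpha_k(p)$ bounded away from $0$ and $\infty$), the $p$-th factor equals $1+O_{k,\lambda}(p^{-2})$. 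Hence $\prod_{p>|z|}$ converges absolutely and is $O_{k,\lambda}(1)$. The remaining finite product $\prod_{p\le|z|}$ has at most a bounded number of factors, each bounded in modulus by a constant (again via the bounds in Proposition \ref{bound}), so it is $O_{k,\lambda}(1)$ as well. Combining gives $|I_{k,\beta}(1-z+\delta z,z)|=O(1)$ on the strip.

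The odd case $G_{k,\beta}(1-(1-\delta)z,z)$ is identical but easier: from \eqref{G} the $p$-th factor after the substitution is $1-\tfrac1p+\tfrac1p\left(\tfrac{p}{p-1}\right)^{z}$, with no $\alpha_k$ correction, so the tail factor is $1+O_\lambda(p^{-2})$ for $p>|z|$ and the same splitting argument applies verbatim. For uniform convergence one argues as at the end of the proof of Lemma \ref{bound12}: on any set $\{0<\Re(z)\le\lambda\}$ one has, for $a<p<b$ large, $\bigl|\log\prod_{a<p<b}(\cdots)\bigr|\ll_{k,\lambda}\sum_{a<p<b}p^{-2}\to 0$, so the products define analytic (in particular bounded) functions of $z$ there.

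I do not expect a serious obstacle here; the one point requiring a little care is making the implied constants genuinely independent of $t=\Im(s)$ and $\eta=\Im(z)$. This is automatic because after the substitution $s=1-(1-\delta)z$ the modulus of each local factor depends on $z$ only through bounds of the shape $|p^{-z}|=p^{-\xi}$ and $|(p/(p-1))^{z}|=(p/(p-1))^{\xi}$, which involve only $\xi=\Re(z)\in(0,\lambda]$ and not $\eta$; the oscillation in $\eta$ (and in $t$) can only help. So the estimates are uniform in the full strip, exactly as the statement asserts.
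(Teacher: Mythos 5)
There is a genuine gap in the treatment of the ``finite part'' $\prod_{p\le|z|}$. On the strip $\{z:\,0<\Re(z)\le\lambda\}$ the imaginary part $\eta=\Im(z)$ is completely unrestricted, so $|z|=\sqrt{\xi^2+\eta^2}$ is unbounded and the set $\{p\le|z|\}$ contains $\pi(|z|)\to\infty$ primes — not a bounded number depending only on $\lambda$. If one only bounds each such factor by a constant $C_\lambda>1$ (via Proposition~\ref{bound}), the partial product is of size $C_\lambda^{\pi(|z|)}$, which blows up as $|\eta|\to\infty$; even the sharper per-factor bound $1+O_\lambda(1/p)$ gives $\prod_{p\le|z|}(1+O_\lambda(1/p))\ll(\log|z|)^{O_\lambda(1)}$, still unbounded. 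So the claim ``the remaining finite product has at most a bounded number of factors, each bounded in modulus, hence is $O_{k,\lambda}(1)$'' does not hold as stated, and the phrase ``finite set depending only on $\lambda$'' is false.

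The splitting at $p=|z|$ is what creates the difficulty, and it is unnecessary. The paper instead applies, for \emph{every} prime $p$, the triangle inequality to remove the $\eta$- and $t$-dependence at the outset:
\[
\left|1-\tfrac{1}{p}+\tfrac{1}{p}\left(\tfrac{p}{p-1}\right)^{z}\alpha_k(p)^{-z}\right|
\;\le\; 1-\tfrac{1}{p}+\tfrac{1}{p}\left(\tfrac{p}{p-1}\right)^{\xi}\alpha_k(p)^{-\xi},
\]
which depends only on $\xi\in(0,\lambda]$. One then Taylor-expands uniformly in $\xi\le\lambda$: $(\tfrac{p}{p-1})^{\xi}=1+\tfrac{\xi}{p-1}+O_\lambda(p^{-2})$ and $\alpha_k(p)^{-\xi}=1+O_\lambda(p^{-k/2})$ (Proposition~\ref{bound}), so that the right-hand side equals $1+O_\lambda(p^{-2})$ for all $p\ge 2$ — the $-1/p$ and $+1/p$ cancel and only the $O_\lambda(p^{-2})$ remainder survives. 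The full Euler product is therefore absolutely convergent and $O_\lambda(1)$ with no split needed. You actually state the key observation at the end — that after taking moduli everything depends only on $\xi$ — but you never use it to fix the finite part; as written, that step fails.
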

 \begin{proof}In here, we present a proof for the function $I_{k, \beta}(1-z+\delta z, z)$. The proof for $G_{k, \beta}(1-z+\delta z, z)$ is similar. From definition 
\begin{align*}
   |  I_{k, \beta}(1-z+\delta z, z)|&=  \left|\prod_{p}\left(1-\frac{1}{p}+\frac{1}{p}\left(\frac{p}{p-1}\right)^z \alpha_k(p)^{-z}\right) \right|\\ &=   \prod_{p}\left(1-\frac{1}{p}+\frac{1}{p}\left(\frac{p}{p-1}\right)^{\Re(z)} \alpha_{k}(p)^{-\Re(z)}\right)   \\
&\leq \prod_{p} \left(1-\frac{1}{p}+\frac{1}{p} \left(\frac{p}{p-1}\right)^{\lambda} \left(1+\frac{\lambda}{p^{k/2}-1}+O\left(\frac{\lambda^2}{(p^{k/2}-1)^2}\right)\right) \right)\\
     &\leq\prod_{p}\left(1-\frac{1}{p}+\frac{1}{p}\left(1+\frac{1}{p-1}\right)^\lambda \left(1+\frac{\lambda}{p^{k/2}-1}+O\left(\frac{\lambda^2}{(p^{k/2}-1)^2}\right) \right)\right)\\
     &\leq\prod_{p}\left(1-\frac{1}{p}+\frac{1}{p}\left( 1+\frac{2\lambda}{p-1}+O\left(\frac{\lambda^2}{(p-1)^2}\right) \right) \right)
    \\&\leq\prod_{p}\left(1+\frac{4\lambda}{p^2}+O_{\lambda}\left(\frac{1}{p(p-1)^2} \right)\right)
    =O(1).
    \end{align*} 
\end{proof}
\section{Key Lemmas} \label{key}
\begin{lemma}\label{M1}
Let $x \geq 1$ be any real number. Then
 \[\sum_{n\leq  x} \Phi_k(n)^{-z} n^{\beta z}=R_{k, \beta} (z) \frac{x^{1-z+\delta z}}{1-z+\delta z}+O\left(x^{1-b+\delta b}\exp{\left(- \sqrt{f_k \log x \log\log x}\right)}\right),\]
 where 
 \begin{align} \label{residue00}
  R_{k, \beta}(z)=\begin{cases}
  G_{k, \beta} (1-(1-\delta)z, z)& \quad \text{if } k \text{ is odd},\\
  I_{k, \beta} (1-(1-\delta)z, z)&  \quad \text{if } k \text{ is even}.
  \end{cases}
 \end{align}
for $x\geq x_0(k)$ and $z$ in the rectangle 
\begin{align}\label{region}
 \left\{z=b+i \eta :0< (1-\delta)b \leq \frac{1}{2},|\eta|\leq \exp \left(\left(\frac{1}{2}\log x \log\log x\right)^{\frac{1}{2}}\right)\right\}.
 \end{align}
\end{lemma}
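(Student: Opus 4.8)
The plan is to apply the effective Perron formula (Proposition~\ref{Perron}) to the Dirichlet series $F_{k,\beta}(s,z) = \sum_n n^{-s+\beta z}\Phi_k(n)^{-z}$, viewing $z$ as a fixed parameter in the rectangle \eqref{region} and integrating in the $s$-variable. By \eqref{G} and \eqref{I} we have $F_{k,\beta}(s,z) = H_{k,\beta}(s,z)\,\zeta(s+z-\delta z)$, where $H_{k,\beta}$ denotes $G_{k,\beta}$ for $k$ odd and $I_{k,\beta}$ for $k$ even. The summatory function $\sum_{n\le x}\Phi_k(n)^{-z}n^{\beta z}$ is then $\frac{1}{2\pi i}\int_{(a)} F_{k,\beta}(s,z)\,\frac{x^s}{s}\,ds$ up to the Perron error term, with $a$ chosen slightly to the right of the line $\Re(s) = 1 - (1-\delta)b$ (the abscissa of convergence, since $\zeta(s+z-\delta z)$ has its pole at $s = 1 - (1-\delta)z$). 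The Perron tail error $\frac{x^a}{T}\sum_n |\Phi_k(n)^{-z}n^{\beta z}| n^{-a} |\log(x/n)|^{-1}$ is handled by the standard device of taking $a = 1 - (1-\delta)b + 1/\log x$ and treating the $n$ near $x$ separately; this contributes an admissible error.

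Next I would shift the contour from $\Re(s) = a$ to a contour hugging the classical zero-free region of $\zeta$: specifically, to the line (or broken contour) $\Re(s) = 1 - (1-\delta)b - c/\log(|t|+2)$ translated appropriately, i.e., to the zero-free region of $\zeta(s + z - \delta z)$. In crossing, the only singularity is the simple pole of $\zeta(s+z-\delta z)$ at $s_0 = 1 - (1-\delta)z = 1-z+\delta z$, whose residue is $H_{k,\beta}(s_0, z)\,\frac{x^{s_0}}{s_0} = R_{k,\beta}(z)\,\frac{x^{1-z+\delta z}}{1-z+\delta z}$ by the definition \eqref{residue00}; note $H_{k,\beta}(s_0,z)$ is finite and in fact $O(1)$ here by the last lemma of Section~\ref{boundssec} (for $b$ in the stated range). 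This produces exactly the main term. The remaining contour integral is estimated using: the zeta bounds \eqref{zetabounds}, the classical bound $1/\zeta(s) \ll \log(|t|+2)$ in the zero-free region together with the standard bound for $\zeta$ there, and crucially the bounds on $H_{k,\beta}(s,z)$ from Lemmas~\ref{bound12} and \ref{bound11}, which give $H_{k,\beta} \ll \exp(M_k \log|\eta|/\log\log|\eta|)$ when $\sigma + (1-\delta)\xi \ge 1 - \log\log|\eta|/\log|\eta|$ and $H_{k,\beta} \ll 1$ when $|\eta|$ is small. Here I must be careful that it is the imaginary part $\eta$ of $z$ (not $t$) that drives the growth of $H_{k,\beta}$, and that on the shifted contour the real-part condition $\sigma + (1-\delta)\xi \ge 1 - \varepsilon$ is met for the appropriate range of $|t|$; for large $|t|$ one uses the convexity bound for $\zeta$ and absolute convergence of $H_{k,\beta}$ in $S_\varepsilon^+$ instead. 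Truncating the contour at height $|t| = T$ with $T = \exp((\tfrac12 \log x \log\log x)^{1/2})$ and optimizing the zero-free-region parameter against the factor $x^{-c/\log T}$ produces the error term $\exp(-\sqrt{f_k \log x \log\log x})$; the restriction $|\eta| \le \exp((\tfrac12\log x\log\log x)^{1/2})$ in \eqref{region} is exactly what is needed so that the $H_{k,\beta}$-growth factor $\exp(M_k\log|\eta|/\log\log|\eta|)$ is dominated by the saved factor.

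The main obstacle, and the point requiring the most care, is the interplay between the two complex variables: the error term must be uniform in $z$ throughout the rectangle \eqref{region}, so the estimates for the shifted integral must simultaneously accommodate (i) the shift of the pole location with $\Re(z) = b$, (ii) the growth of $H_{k,\beta}(s,z)$ governed by $\Im(z) = \eta$ rather than $\Im(s) = t$, and (iii) the width of the zero-free region of $\zeta(s+z-\delta z)$, which depends on $|t + \eta - \delta\eta|$. One must split the $t$-integral into ranges $|t| \le |\eta|$ and $|t| > |\eta|$ (or similar), use the polynomial zeta bounds of \eqref{zetabounds} together with $H_{k,\beta} = O(1)$ off the near-critical strip in the far range, and use the zero-free-region estimate with the sub-polynomial bound on $H_{k,\beta}$ in the near range; balancing these and checking that the condition $0 < (1-\delta)b \le 1/2$ keeps $s_0$ and the contour inside the region of validity of the preceding lemmas is the delicate bookkeeping. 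Once the uniform contour estimate is in hand, collecting the pole contribution, the truncated integral bound, and the Perron tail gives the stated asymptotic with $x^{1-b+\delta b}\exp(-\sqrt{f_k\log x\log\log x})$ as the error.
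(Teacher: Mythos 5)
Your overall scaffold — apply Perron in the $s$-variable with $z$ held fixed in the rectangle, shift left past the simple pole of $\zeta(s+z-\delta z)$ at $s_0=1-(1-\delta)z$, collect the residue $R_{k,\beta}(z)\,x^{s_0}/s_0$, and control the shifted integral with the bounds of Lemmas~\ref{bound12}/\ref{bound11} while keeping track of the $\eta$-dependence — matches the paper. The paper's actual execution is simpler on two counts: it starts Perron at $a=1+\log\log x/\log x$ (to the right of $1$, not at $1-(1-\delta)b+1/\log x$), and it shifts to a \emph{straight} vertical line at $c=1-(1-\delta)b-\sqrt{\log\log x/(2\log x)}$, estimating $\zeta$ there with nothing more than the convexity bound $\zeta(\sigma+it)\ll|t|^{(1-\sigma)/2}$ from \eqref{zetabounds}, and then choosing $T=x^{(1-\delta)b}\exp\bigl(\sqrt{e_k\log x\log\log x}\bigr)$ so that $x^{c}\,T^{O(\sqrt{\log\log x/\log x})}$ yields the factor $\exp(-\mathrm{const}\sqrt{\log x\log\log x})$.

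The genuine gap in your proposal is the appeal to the zero-free region of $\zeta$ and to the bound $1/\zeta(s)\ll\log(|t|+2)$. Neither is relevant here: the generating function is $H_{k,\beta}(s,z)\,\zeta(s+z-\delta z)$ with $\zeta$ in the \emph{numerator}, so the zeros of $\zeta$ create no singularities and there is no $1/\zeta$ to bound. More substantively, the zero-free-region contour is quantitatively too shallow to give the stated error. That contour sits at distance $\asymp 1/\log T$ from the pole line, so the saving is $x^{-c/\log T}=\exp(-c\log x/\log T)$. Balancing against the truncation forces $\log T\approx\sqrt{\log x\,\log\log x}$, which only produces $\exp\bigl(-c\sqrt{\log x/\log\log x}\bigr)$, weaker than the claimed $\exp\bigl(-\sqrt{f_k\log x\log\log x}\bigr)$. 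The paper instead shifts by $\sqrt{\log\log x/\log x}$, which is a factor of roughly $\log\log x$ \emph{deeper} than the zero-free region boundary at those heights; this is perfectly legitimate precisely because no $1/\zeta$ appears, and it is this deeper (but still $o(1)$) shift, controlled by convexity alone, that delivers the saving $x^{-\sqrt{\log\log x/(2\log x)}}=\exp\bigl(-\sqrt{\tfrac12\log x\log\log x}\bigr)$. So the missing idea is: forget the zero-free region; use a straight-line shift of width $\sqrt{\log\log x/\log x}$ and the convexity bound, then pick $T$ so the $T$-powers do not overwhelm that gain.
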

\begin{proof}
We will prove it for $k$ even. The proof for odd $k$ follows similarly.
Employing Proposition \ref{Perron}, with $a=1+\frac{\log\log x}{\log x} $ and  $1\leq T\ll x$, we obtain
     \begin{align}\label{eqn:sum1}
         \sum_{n\leq  x} \Phi_k(n)^{-z}n^{\beta z}=\frac{1}{2\pi i}\int_{a-i T}^{a+i T}F_{k, \beta}(s,z)\frac{x^s}{s}\,dx +O\left( \frac{x^a}{T} \sum_{n=1}^{\infty}\frac{ \Phi_k(n)^{-b}}{n^{a- b\beta} |\log \frac{x}{n}|}\right).  
         \end{align}
         We first estimate the error term $R$ in \eqref{eqn:sum1}.
     \begin{align}\label{E11}
        R    &\ll      \frac{x^a}{T}  { \sum_{n=1}^{\infty}\frac{(\log \log n)^{2b}}{n^{a+b-b\delta} \log (\frac{x}{n})} }\\
             & \leq \frac{x^a}{T}  \left(\sum_{n\leq \frac{x}{e}} \frac{(\log \log n)^{2b}}{n^{a+b-b \delta} }+    \sum_{\frac{x}{e}<n<ex}\frac{(\log \log n)^{2b}}{n^{a+b-b \delta} \log (\frac{x}{n})}  +\sum_{n\geq ex} \frac{(\log \log n)^{2b}}{n^{a+b-b \delta}}\right)        \nonumber \\
             & \leq \frac{x^a(\log \log x)^{2b}}{T}  \left(\sum_{n\leq \frac{x}{e}} \frac{1}{n^{a+b-b \delta} }+    \left(\frac{e}{x}\right)^{a+(1- \delta)b}\sum_{\frac{x}{e}\leq n\leq ex}  \frac{ex}{|n-x|}  +\sum_{n\geq ex} \frac{1}{n^{a+b-b \delta}}\right) \nonumber \\
             &  \leq \frac{x^{1-(1-\delta)b}}{T}  (\log \log x)^{2b} \log x.
             \end{align} 
We now draw attention to the integral in \eqref{eqn:sum1}, which we denote by $I$. For this, we consider the contour consisting of the line segments $[a- i T, a+ i T], [a+ i T, c + i T], [c + i T, c- i T],\ \text{and}\ [c - iT, a-iT]$, where $c=1-(1- \delta)b-\sqrt{(\log \log x)/(2\log x)}$. Since the integrand has a simple pole inside the rectangle at $s=1-(1-\delta)z$, by Cauchy's residue theorem, this implies that
    \begin{align}\label{perron}
   I  
     =&R_{k, \beta}(z)\frac{x^{1-z+\delta z}}{1-z+\delta z}+ \frac{1}{2\pi i}\left\{\int_{a-i T}^{c-i T}+\int_{c-i T}^{c+i T}+\int_{c+i T}^{a+i T}\right\}F_{k, \beta}(s,z)\frac{x^s}{s} \ ds \nonumber \\
     =&R_{k, \beta}(z)\frac{x^{1-z+\delta z}}{1-z+\delta z}+J_1+J_2+J_3,
  \end{align}
  where  $R_{k, \beta}(z)$ is the residue. It is given by 
  \begin{align}\label{residue}
  R_{k, \beta}(z)=\begin{cases}
  G_{k, \beta} (1-(1-\delta)z, z)& \quad \text{if } k \text{ is odd},\\
  I_{k, \beta} (1-(1-\delta)z, z)&\quad \text{if } k \text{ is even}.
  \end{cases}
  \end{align}
  Next, we estimate the line integrals. We have
    \begin{align}\label{integral111}
  J_3     &\leq\frac{1}{T}\int_{c }^{a } x^\sigma|\zeta((1-\delta)z+\sigma+iT)||I_{k, \beta}(\sigma+iT, b+i\eta)| \ d\sigma \nonumber \\
        &\leq\frac{ \exp\left(\dfrac{M_k\log |\eta|}{\log\log |\eta|}\right) }{T}\int_{c }^{a } x^\sigma|\zeta(z(1-\delta)+\sigma+i T) |\ d\sigma \nonumber \\
        & \ll \frac{\exp\left(\frac{N_k\log x}{\log \log x}\right)^{1/2}}{T} \int_{c }^{a } x^\sigma|\zeta(z(1-\delta)+\sigma+i T) |\ d\sigma.
    \end{align}
    The second last step follows from Lemma \ref{bound12} since
    \[\sigma+b(1-\delta)\geq c+b(1-\delta)=1-\left(\frac{\log \log x}{2 \log x} \right)^{1/2}\geq 1-\frac{\log \log |\eta|}{\log |\eta|}.  \]
 Using bounds for $\zeta(s)$ \eqref{zetabounds} and above, the integral in \eqref{integral111} can be rewritten as
    \begin{align}\label{integral222}
  \int_{c }^{a } x^\sigma|\zeta(z(1-\delta)+\sigma+i T) |\ d\sigma  & \ll \left(\int_{c}^{1} +\int_{1}^{a}  \right)x^\sigma|\zeta((1-\delta)z+\sigma+i T) | \ d\sigma       \nonumber \\
    & \ll \left(\int_{c}^{1}  x^\sigma T^{\frac{1}{2}(1-\sigma-(1-\delta)b)}  \ d\sigma +\int_{1}^a  x^\sigma  \ d\sigma\right)        \nonumber \\
    & \ll T^{\frac{1}{2}(1-(1-\delta)b)}\int_{c}^{1}  \left(\frac{x}{\sqrt{T}}\right)^\sigma   \ d\sigma + \int_{1}^a  x^\sigma  \ d\sigma \nonumber \\
   & \ll x+x^{1-(1-\delta)b} T^{\frac{1}{2}{\left(\frac{\log \log x}{2\log x}\right)^{1/2}} }\exp(- (\frac{1}{2}\log x \log \log x)^{\frac{1}{2}}).
    \end{align}
   This leads to the estimate
    \begin{align}\label{integral333}
     J_3 & \ll  x \frac{\exp\left(\frac{N_k\log x}{\log \log x}\right)^{1/2}}{T}+ \frac{x^{1-(1-\delta)b}  \exp(-N_k^{\prime} (\log x \log \log x)^{\frac{1}{2}})}{T^{1-\frac{1}{2}{\left(\frac{\log \log x}{2\log x}\right)^{1/2}}}}. 
    \end{align}
     Similar estimates follow for $J_1$. Next, we focus on the integral $J_2$:
   \begin{align}\label{integral444}
  J_2 &\ll  x^c \int_{-T }^{T }  \frac{\left|\zeta((1-\delta)z+c+it)  I_{k, \beta}( c+it, z) \right|}{|c+it|} \ dt \nonumber\\
 & \ll \left(x^{(1-(1-\delta)b)} +x ^{1-(1-\delta)b}  T^{\frac{1}{2}{\left(\frac{\log \log x}{2\log x}\right)^{1/2}} }\right)\exp\left(-\frac{1}{\sqrt{2}} (\log x \log \log x)^{\frac{1}{2}}\right) \exp\left(\frac{N_k\log x}{\log_2 x}\right)^{1/2},
   \end{align} 
   where we have used bounds from Lemma \ref{bound12} and \eqref{zetabounds}.  The choice of \[T=x^{(1-\delta)b} \exp(\left(e_k\log x \log \log x\right)^{\frac{1}{2}}),\]
   leads to the following bounds for $J_1$ and $J_3$ in \eqref{integral333}:
    \begin{align}\label{integral3333}
 J_1, J_3    \ll x^{(1-(1-\delta)b)} \exp(-e_k^{\prime} (\log x \log \log x)^{\frac{1}{2}})
    \end{align}
    for $x \geq x(k)$. Substituting \eqref{residue}, \eqref{integral444} and \eqref{integral3333} in \eqref{perron}, we obtain
    \begin{align*}
     \frac{1}{2\pi i}\int_{a-i T}^{a+i T}F_{k, \beta}(s,z)\frac{x^s}{s}\ ds =R_{k, \beta}(z)\frac{x^{1-z+\delta z}}{1-z+\delta z}+O(x^{(1-(1-\delta)b)} \exp(-f_k (\log x \log \log x)^{\frac{1}{2}}))
     \end{align*} 
 which holds true in the rectangular region given in \eqref{region} $x\geq x(k)$. This, combined with \eqref{eqn:sum1} and \eqref{E11}, concludes the proof.
    \end{proof}
Next, we evaluate $\phi_{\Phi_k, \beta}(x, y)$. In doing so, we use Proposition \ref{Perron0} and choose $\tau=\exp \left(\frac{1}{2}\log x \log\log x\right)^{\frac{1}{2}}$. The constant $b$ will be chosen later depending on the values of $\delta$ and $\alpha$.

\begin{lemma}\label{Main0} For $x\geq x_{\delta, k}$,  there exist positive constants $f_k^{\prime}$ and $q_k$ such that         \begin{align*}
         \phi_{\Phi_k, \beta}(x, y)=&\frac{x}{2\pi i}\int_{b-i \tau}^{b+i \tau} R_{k, \beta}(z)\frac{(\alpha x^{\delta})^z }{z(1-(1-\delta)z)}\,dz+E_1,
              \end{align*}
              where $E_1$ is the error term given by
              \begin{align*}
          E_1\ll  \begin{cases}
           x^{1-\varepsilon+\delta \varepsilon} y^{\varepsilon}    \exp( -\sqrt{f_k^{\prime}\log x\log\log x})& \quad \text{if } 0\leq \delta \leq 1,\\
          x^{\frac{1}{2}+(1-\delta)\varepsilon} y^{\frac{1}{2(1-\delta)}-\varepsilon} \exp( -\sqrt{q_k\log x\log\log x})& \quad \text{if } \delta < 0 \ \mbox{and} \  0<\alpha <1,\\
           e^{\delta} x \alpha^{\frac{1}{\log x}}     (\log x \log\log x )^{\frac{1}{2}}&\quad  \text{if } \delta <0 \ \mbox{and} \  1\leq \alpha.
\end{cases}
              \end{align*}
        \end{lemma}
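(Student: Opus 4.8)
The plan is to start from Proposition~\ref{Perron0} applied to the arithmetic function $f(n)=\Phi_k(n)n^{-\beta}$, so that $\phi_f(x,y)=\phi_{\Phi_k,\beta}(x,y)$ in the notation of \eqref{Function}, with abscissa parameter $b$ and heights $U=U'=\tau$. This expresses $\phi_{\Phi_k,\beta}(x,y)$ as the integral
\[
\frac{1}{2\pi i}\int_{b-i\tau}^{b+i\tau}\Bigl(\sum_{n\le x}\Phi_k(n)^{-z}n^{\beta z}\Bigr)\frac{y^z}{z}\,dz
\]
plus an error term of size $O\bigl(y^b\sum_{n\le x}\Phi_k(n)^{-b}n^{\beta b}\tau^{-1}\bigr)$. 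The next move is to insert the asymptotic from Lemma~\ref{M1}, which is valid precisely for $z$ in the rectangle \eqref{region}; since $|\eta|\le\tau=\exp\bigl((\tfrac12\log x\log\log x)^{1/2}\bigr)$ on our contour, Lemma~\ref{M1} applies uniformly as long as $b$ is chosen with $0<(1-\delta)b\le\tfrac12$. Substituting $\sum_{n\le x}\Phi_k(n)^{-z}n^{\beta z}=R_{k,\beta}(z)\frac{x^{1-(1-\delta)z}}{1-(1-\delta)z}+O\bigl(x^{1-(1-\delta)b}\exp(-\sqrt{f_k\log x\log\log x})\bigr)$ into the integral produces the main term
\[
\frac{1}{2\pi i}\int_{b-i\tau}^{b+i\tau}R_{k,\beta}(z)\frac{x^{1-(1-\delta)z}}{1-(1-\delta)z}\cdot\frac{y^z}{z}\,dz
=\frac{x}{2\pi i}\int_{b-i\tau}^{b+i\tau}R_{k,\beta}(z)\frac{(\alpha x^{\delta})^z}{z(1-(1-\delta)z)}\,dz,
\]
using $y=\alpha x$ and collecting $x^{-(1-\delta)z}y^z=(\alpha x^\delta)^z$ together with the overall factor $x$. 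That is exactly the displayed main term in the statement.

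What remains is to bound the two contributions to $E_1$: (i) the error from Perron's formula, $y^b\tau^{-1}\sum_{n\le x}\Phi_k(n)^{-b}n^{\beta b}$, and (ii) the error inherited from Lemma~\ref{M1}, namely $x\cdot x^{-(1-\delta)b}\exp(-\sqrt{f_k\log x\log\log x})\cdot\frac{1}{2\pi i}\int_{b-i\tau}^{b+i\tau}\frac{y^b}{|z|}\,|dz|\ll x^{1-(1-\delta)b}y^b\log\tau\,\exp(-\sqrt{f_k\log x\log\log x})$, where the $\log\tau$ comes from $\int_{1}^{\tau}dt/t$ and is itself $\ll(\log x\log\log x)^{1/2}$, hence absorbed into the exponential (up to shrinking the constant). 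For (i), one uses $\Phi_k(n)\gg n^k/(\log\log n)^2$, so $\Phi_k(n)^{-b}n^{\beta b}\ll n^{-(1-\delta)b}(\log\log n)^{2b}$, and then $\sum_{n\le x}n^{-(1-\delta)b}(\log\log n)^{2b}\ll x^{1-(1-\delta)b}(\log\log x)^{2b}$ when $(1-\delta)b<1$. Combining, both errors are $\ll x^{1-(1-\delta)b}y^b\exp(-\sqrt{c\log x\log\log x})$ for a suitable $c$, after absorbing the slowly-varying factors $\log\tau$ and $(\log\log x)^{2b}$ into the exponential by a harmless adjustment of the constant (possible because $b$ will be bounded).

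The final step is the case analysis on the choice of $b$, which is where the three stated forms of $E_1$ come from. For $0\le\delta\le1$ one takes $b=\varepsilon$ (small), giving $x^{1-(1-\delta)\varepsilon}y^{\varepsilon}\exp(-\sqrt{f_k'\log x\log\log x})$; here $y^b=y^\varepsilon$ and $x^{1-(1-\delta)b}=x^{1-(1-\delta)\varepsilon}=x^{1-\varepsilon+\delta\varepsilon}$, matching the first line. For $\delta<0$ and $0<\alpha<1$ one takes $b=\frac{1}{2(1-\delta)}$, the largest value allowed by $(1-\delta)b\le\tfrac12$; then $x^{1-(1-\delta)b}=x^{1/2}$ and $y^b=y^{1/2(1-\delta)}$, which after a further cosmetic insertion of $\pm\varepsilon$ into the exponents (to have room in the Perron and Lemma~\ref{M1} error estimates, since the rectangle condition is strict) gives $x^{1/2+(1-\delta)\varepsilon}y^{1/2(1-\delta)-\varepsilon}\exp(-\sqrt{q_k\log x\log\log x})$, the second line. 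For $\delta<0$ and $\alpha\ge1$ one takes $b=\frac{1}{\log x}$, so $x^{-(1-\delta)b}=e^{-(1-\delta)}$ and $y^b=\alpha^{1/\log x}x^{1/\log x}=e\,\alpha^{1/\log x}$; then the Lemma~\ref{M1} error contributes $x\cdot e^{-(1-\delta)}\cdot e\,\alpha^{1/\log x}\cdot(\log x\log\log x)^{1/2}\exp(-\sqrt{f_k\log x\log\log x})\ll e^{\delta}x\alpha^{1/\log x}(\log x\log\log x)^{1/2}$ (the exponential factor $\exp(-\sqrt{\cdots})\le1$ is dropped), and similarly for the Perron error, yielding the third line. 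I expect the main technical obstacle to be bookkeeping the interplay between the $\log\tau$ factor from $\int d|z|/|z|$, the $(\log\log x)^{2b}$ from the Perron tail sum, and the leading exponential savings $\exp(-\sqrt{f_k\log x\log\log x})$ from Lemma~\ref{M1}: one must check that these polynomially-small-in-$\log$ nuisances genuinely get swallowed by a slight reduction of the constant inside the square root, uniformly over the admissible range of $b$ in each case, and that the strict inequality in the definition of $S_{1-\varepsilon}^+$ (needed to invoke Lemmas~\ref{bound12}--\ref{bound14} and hence Lemma~\ref{M1}) is respected by each choice of $b$.
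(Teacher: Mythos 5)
Your proposal is correct and follows essentially the same route as the paper: apply Proposition~\ref{Perron0} to $f(n)=\Phi_k(n)n^{-\beta}$, feed in the asymptotic from Lemma~\ref{M1} on the vertical segment, pull out the factor $x(\alpha x^{\delta})^z$, and then pick $b=\varepsilon$, $b=\tfrac{1}{2(1-\delta)}-\varepsilon$, or $b=\tfrac{1}{\log x}$ in the three regimes. Your bookkeeping of the $\log\tau$ factor from $\int d|z|/|z|$ and the $(\log\log x)^{2b}$ tail factor, to be absorbed by shrinking the constant under the square root, is exactly the right technical move, and your verification that each choice of $b$ keeps $z$ inside the rectangle of Lemma~\ref{M1} is the necessary sanity check.

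The one place you depart from the paper is the third case $\delta<0,\ \alpha\ge 1$: you keep $U=U'=\tau$, so Lemma~\ref{M1} applies on the entire contour and the $(\log x\log\log x)^{1/2}$ in the claimed bound simply comes from $\log\tau$ after dropping the exponential saving. The paper instead enlarges the Perron cut-off to $U=\exp\bigl((\log x\log\log x)^{1/2}\bigr)=\tau^{\sqrt 2}$, splits the vertical integral into the central piece $[b-i\tau,b+i\tau]$ (treated via Lemma~\ref{M1}) and two wings $[\pm i\tau,\pm iU]$, and bounds the wings trivially using $|\sum_{n\le x}\Phi_k(n)^{-b}n^{b\beta}|\ll x^{1+\delta b}$ and $|y^z|\le e\alpha^{b}$; the $(\log x\log\log x)^{1/2}$ there comes from $\log(U/\tau)$. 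Your version is simpler and in fact yields a sharper error (with an unnecessary exponential saving that you then voluntarily discard to match the stated form), so there is no gap; you could safely mention that the weaker stated bound is what survives in the case analysis of the main theorems.
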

        
\begin{proof}
 Taking $\frac{\Phi_k(n)}{n^{\beta}}$ in Proposition \ref{Perron0} and $b>0$, we obtain
\begin{align}\label{M111}
 \phi_{\Phi_k, \beta}(x, y)= \frac{1}{2\pi i}\int_{b-i U}^{b+i U}\left(\sum_{n\le  x} \left(\frac{\Phi_k(n)}{n^\beta}\right)^{-z}\right)\frac{y^z}{z}\,dz+O\left( \frac{y^{b}}{U} \sum_{n \leq x} \Phi_k(n)^{-b} n^{b\beta}  \right).
  \end{align}
\begin{description}
 \item[Case (1)]If $0\leq \delta\leq 1$, then we can employ Lemma \ref{M1} (as $(1-\delta)\geq 0$ implies $b>0$) with  $0<b=\varepsilon$ and $U=\tau$ to evaluate the integral in the expression \eqref{M111}.
\begin{align*}
 \frac{1}{2\pi i}\int_{b-i \tau}^{b+i \tau}\left(\sum_{n\le  x} \left(\frac{\Phi_k(n)}{n^\beta}\right)^{-z}\right)\frac{y^z}{z}\,dz&= \frac{x}{2\pi i}\int_{b-i \tau}^{b+i \tau} R_{k, \beta} (z) \frac{(\alpha x^\delta) ^{z} }{z(1-z+\delta z)}dz\\
& +O\left(x^{1-\varepsilon+\delta \varepsilon} y^{\varepsilon}         \exp{(- \sqrt{f_k^\prime \log x \log\log x}} )\right).
\end{align*}
Again employing Lemma \ref{M1} the error term in \eqref{M111} can be estimated as $\ll  \frac{y^{b}}{\tau} x^{1-b+\delta b} $ and $0<\alpha < 1$ hence the result.
\item[Case (2)] If $\delta < 0$, then we have two cases: $0<\alpha < 1$ and $\alpha\geq 1$. 
\begin{description}
\item[Subcase (1)]If $0<\alpha < 1$, we will choose $b=\frac{1}{2(1-\delta)} -\varepsilon$ and  $U=\tau$ in Lemma \ref{M1} to evaluate the integral in the expression \eqref{M111} to obtain
\begin{align*}
 \frac{1}{2\pi i}\int_{b-i \tau}^{b+i \tau}\left(\sum_{n\le  x} \left(\frac{\Phi_k(n)}{n^\beta}\right)^{-z}\right)\frac{y^z}{z}\,dz&= \frac{x}{2\pi i}\int_{b-i \tau}^{b+i \tau} R_{k, \beta} (z)\frac{(\alpha x^\delta) ^{z} }{z(1-z+\delta z)}dz\\
& +O\left(x^{\frac{1}{2}+(1-\delta)\varepsilon} y^{\frac{1}{2(1-\delta)}-\varepsilon}\exp( -\sqrt{q_k\log x\log\log x})  \right).
\end{align*}
\item[Subcase (2)] For $\alpha\geq 1$,
we take $b=\frac{1}{\log x}$. It is easy to see that there exists $x_{\delta}\geq 0$ such that $2(1-\delta )<\log x$ for all $x\geq x_{\delta}$. We will apply Proposition \ref{Perron0} with  $U=\exp ((\log x \log\log x)^{\frac{1}{2}})$.  
Then \eqref{M111} can be evaluated as 
 \begin{align*}
  \phi_{\Phi_k, \beta}(x, y) &= \frac{1}{2\pi i}\left(\int_{b-i \tau}^{b+i \tau}   +   \int_{b-i U}^{b-i \tau} +\int_{b+i \tau}^{b+i U} \right) \left(\sum_{n\le  x} \left(\frac{\Phi_k(n)}{n^\beta}\right)^{-z}\right)\frac{y^z}{z}\,dz\\
  &+O\left( \frac{y^{b}}{U} \sum_{n \leq x} \Phi_k(n)^{-b} n^{b\beta}  \right).
 \end{align*}
 Now the error term can be $\ll  \frac{\alpha^{b}}{U} x^{1+\delta b} $.
For the integral in the range  $[b-i \tau, b+i \tau]$ we will use Lemma \eqref{M1} to obtain
 \begin{align*}
 \frac{1}{2\pi i}\int_{b-i \tau}^{b+i \tau}\left(\sum_{n\le  x} \left(\frac{\Phi_k(n)}{n^\beta}\right)^{-z}\right)\frac{y^z}{z}\,dz&= \frac{x}{2\pi i}\int_{b-i \tau}^{b+i \tau} R_{k, \beta} (z) \frac{(\alpha x^\delta) ^{z} }{z(1-z+\delta z)}dz\\
& +O\left(x^{1+\delta b} \alpha^{b}         \exp{(- \sqrt{f_k^{\prime \prime \prime} \log x \log\log x}} )\right).
\end{align*}
  We estimate $ \int_{b-i U}^{b-i\tau} $ and $\int_{b+i \tau}^{b+i U} $ by observing $ |\sum_{n \leq x} \Phi_k(n)^{-b} n^{b\beta}|\ll x^{1+\delta b}$ and $|y^z|\leq |(\alpha x)^z|\leq e\alpha^b$. Hence
  \begin{align*}
\left(  \int_{b-i U}^{b-i\tau} +\int_{b+i \tau}^{b+i U}   \right) \left(\sum_{n\le  x} \left(\frac{\Phi_k(n)}{n^\beta}\right)^{-z}\right)\frac{y^z}{z}\,dz\ll e\alpha^b x^{1+\delta b}  (\log x \log\log x )^{\frac{1}{2}}.
\end{align*}
 \end{description}
  \end{description}
\end{proof}
\begin{lemma}\label{Main00}
There exists an absolute positive constant $\kappa$ such that for any $\delta\leq 0$ with any $y\geq1$  and for $0< \delta <1$ with $0< y\leq \frac{(\log ((1-\delta)^{-1}+\varepsilon))^{-4}}{c_k \kappa^4}x^{1-\delta}$ we have
\begin{align}\label{Pse}
\frac{x}{2\pi i}\int_{b-i \tau}^{b+i \tau}R_{k, \beta} (z) \frac{(\alpha x^\delta) ^{z} }{z(1-z+\delta z)}dz=R_{k, \beta} \left(\frac{1}{(1-\delta)}\right)y^{\frac{1}{(1-\delta)}} +E_2,
\end{align}
where $E_2$ is the error term given by \begin{align*}
E_2 \ll_{\delta} \begin{cases}
x\exp \left( -4 \exp\left( \kappa (c_k \alpha x^\delta)^{\frac{1}{4}}\right)^{-1}\right)&  \quad \text{if} \ k \ \text{is even}, \\
  x \exp \left( -2 \exp\left( \kappa ( \alpha x^\delta)^{\frac{1}{2}}\right)^{-1}\right)& \quad \text{if} \ k \ \text{is odd}.
\end{cases}
\end{align*}

for  $x\geq x_{{\beta}, k}$.
\end{lemma}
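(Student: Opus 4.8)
The plan is to evaluate the contour integral
\[
\frac{x}{2\pi i}\int_{b-i\tau}^{b+i\tau}R_{k,\beta}(z)\frac{(\alpha x^\delta)^z}{z(1-z+\delta z)}\,dz
\]
by shifting the contour to the right. First I would observe that the integrand, as a function of $z$, has a simple pole at $z=0$ (from the factor $1/z$) and a simple pole at $z=\frac{1}{1-\delta}$ (from the factor $1-z+\delta z = 1-(1-\delta)z$ in the denominator); by the lemmas of Section~\ref{boundssec} the functions $R_{k,\beta}(z)=I_{k,\beta}(1-(1-\delta)z,z)$ (resp.\ $G_{k,\beta}$) are analytic in the relevant half-plane $\{\Re(z)>0\}$, so these are the only poles. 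Since $b>0$ is taken small (below $\frac{1}{1-\delta}$ when $\delta\in(0,1)$, and any $b>0$ when $\delta\le 0$), shifting the line of integration to a new vertical line $\Re(z)=B$ with $b<B$ and $B$ chosen past $\frac{1}{1-\delta}$ (or, in the $\delta\le 0$ case, $B$ simply chosen optimally large) picks up the residue at $z=\frac{1}{1-\delta}$. That residue is exactly
\[
\lim_{z\to \frac{1}{1-\delta}}\Bigl(z-\tfrac{1}{1-\delta}\Bigr)R_{k,\beta}(z)\frac{(\alpha x^\delta)^z}{z(1-(1-\delta)z)}
= R_{k,\beta}\!\left(\tfrac{1}{1-\delta}\right)\frac{(\alpha x^\delta)^{1/(1-\delta)}}{-(1-\delta)\cdot \frac{1}{1-\delta}}\cdot(-1),
\]
and since $(\alpha x^\delta)^{1/(1-\delta)} = (yx^{\delta-1})^{1/(1-\delta)} = y^{1/(1-\delta)}x^{-1}$, multiplying by the prefactor $x$ yields the main term $R_{k,\beta}\bigl(\tfrac{1}{1-\delta}\bigr)y^{1/(1-\delta)}$, as claimed. (One must also check $0$ is not a pole contributing here because it lies to the \emph{left} of $b$, so it is not crossed.)

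The remaining work is to bound the three new pieces of the contour: the two horizontal segments at height $\pm\tau$ joining $\Re(z)=b$ to $\Re(z)=B$, and the shifted vertical segment at $\Re(z)=B$. On the horizontal segments, $|z|\ge \tau$ so the $1/(z(1-(1-\delta)z))$ factor contributes a gain of order $\tau^{-2}$; combined with the bound on $R_{k,\beta}$ along these segments (Lemma~\ref{bound13} for $k$ even, Lemma~\ref{bound14} for $k$ odd, giving growth like $\exp(O(\xi\log\log\xi))$ where $\xi = \Re(z)\le B$) and the bound $|(\alpha x^\delta)^z|\le (\alpha x^\delta)^B$ (using $0<\alpha x^\delta$; here one uses $\alpha<1$ in the $\delta\in(0,1)$ regime and the hypothesis on $y$), the horizontal contributions are negligible provided $\tau$ is large, which holds since $\tau=\exp((\tfrac12\log x\log\log x)^{1/2})$. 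On the vertical line $\Re(z)=B$, I would bound the integral by $(\alpha x^\delta)^B \exp(O(B\log\log B))\int_{-\tau}^{\tau}\frac{|dz|}{|z(1-(1-\delta)z)|}$; the integral converges, and the surviving factor is roughly $\exp\bigl(B\log(\alpha x^\delta) + c\,B\log\log B\bigr)$ for $k$ odd, and with $\log(c_k\alpha x^\delta)$ in place for $k$ even because of the extra $c_k^\xi$ in Lemma~\ref{bound13}.

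The crux is then the \textbf{optimization of $B$}. Writing $v = \alpha x^\delta$ (or $c_k v$ in the even case), the vertical-line bound behaves like $\exp\bigl(-B\log(1/v) + cB\log\log B\bigr)$ when $v<1$; choosing $B$ of size roughly $(\log(1/v))^{?}$ — actually, matching the structure of the target error term $\exp(-2\exp(\kappa v^{1/2})^{-1})$ (odd) and $\exp(-4\exp(\kappa (c_kv)^{1/4})^{-1})$ (even), one sees $B$ must be taken \emph{large}, on the order of $\exp(\kappa v^{1/2})$ or $\exp(\kappa (c_kv)^{1/4})$, so that $B\log(1/v)$ dominates and produces the doubly-exponential saving. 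The exponents $1/2$ versus $1/4$ and the prefactors $2$ versus $4$ trace back precisely to the difference between Lemma~\ref{bound14} ($\exp(\xi(2\log\log\xi + \cdots))$) and Lemma~\ref{bound13} ($\exp(\xi(\log c_k + 4\log\log\xi + \cdots))$). I expect the main obstacle to be bookkeeping: ensuring the chosen $B$ still lies inside the region of validity of Lemmas~\ref{bound13}/\ref{bound14} and that $B < \tau$ (so the horizontal segments are genuinely short and the pole at $\tfrac{1}{1-\delta}$ is indeed enclosed), and carefully tracking how the $\log\log$ factors and the constant $\kappa$ propagate through the optimization to land on the stated form of $E_2$. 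Once $B$ is pinned down, the horizontal and vertical estimates each collapse to the claimed bound, and summing them with the residue completes the proof.
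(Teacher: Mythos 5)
Your proposal follows essentially the same route as the paper's proof: shift the contour from $\Re(z)=b$ to $\Re(z)=d$ (your $B$) with $d>\frac{1}{1-\delta}$, pick up the residue at $z=\frac{1}{1-\delta}$ to produce $R_{k,\beta}\bigl(\frac{1}{1-\delta}\bigr)y^{1/(1-\delta)}$, use the $\tau^{-2}$ decay together with Lemmas~\ref{bound13}/\ref{bound14} on the horizontal pieces, bound the new vertical line, and then optimize $d$ so that the factor $\bigl(\log c_k+4\log\log d+4\log\frac{\kappa}{e}+\log(\alpha x^\delta)\bigr)$ is $\le -4$, which is precisely the paper's choice $d=\exp\bigl((\kappa(c_k\alpha x^\delta)^{1/4})^{-1}\bigr)$ in the even case (with the analogous $\exp\bigl((\kappa(\alpha x^\delta)^{1/2})^{-1}\bigr)$ for odd $k$). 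The one point to tighten in your sketch is the phrase ``$B$ of order $\exp(\kappa v^{1/2})$'': the exponent is reciprocated, $d=\exp\bigl((\kappa v^{1/2})^{-1}\bigr)$, which is what makes $d$ large exactly when $v=\alpha x^\delta$ is small and gives the doubly-exponential saving; and the constraint you should be checking is not $B<\tau$ but that $d\ge\frac{1}{1-\delta}+\varepsilon$ when $0<\delta<1$, which is precisely where the hypothesis $y\le\frac{(\log((1-\delta)^{-1}+\varepsilon))^{-4}}{c_k\kappa^4}x^{1-\delta}$ comes from.
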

\begin{proof}
We will consider the case when $k$ is even. The case of $k$ being odd is similar.
We will denote the above integral by $I_{(b, \tau)}$. We take the contour consisting of the line segments $[b- i \tau, b+ i \tau], [b+ i \tau, d + i \tau], [d + i \tau, d- i \tau], [d - i\tau, b-i\tau]$, where $d>\frac{1}{1-\delta} $ is a parameter to be chosen later. Since integrand has a simple pole inside the rectangle at $z=\frac{1}{(1-\delta)}$ so by Cauchy's residue theorem, we have
\begin{align*}
I_{(b, \tau)}=R_{k, \beta} \left(\frac{1}{(1-\delta)}\right)y^{\frac{1}{(1-\delta)}} +\frac{x}{2\pi i}\left( \int_{d-i \tau}^{d+i \tau} +\int_{d+i \tau}^{b+i \tau} +\int_{b-i \tau}^{d-i \tau}\right) R_{k, \beta} (z) \frac{(\alpha x^\delta) ^{z} }{z(1-z+\delta z)}dz.
\end{align*}
To estimate the horizontal integrals, we use Lemma \ref{bound13} and will get 
\begin{align*}
&\int_{d+i \tau}^{b+i \tau} R_{k, \beta} (z) \frac{(\alpha x^\delta) ^{z} }{z(1-z+\delta z)}dz\\
&\ll_{\delta}  \exp \left( \xi \left( \log c_k+ 4 \log \log \xi+4\log \frac{\kappa}{e} \right)\right) \tau^{-2} \int_{b}^{d} (\alpha x^\delta) ^{u} du\\
&\ll_{\delta} \exp( -(2\log x\log\log x)^{\frac{1}{2}})  \exp \left( d\left( \log c_k+ 4 \log \log d+4\log \frac{\kappa}{e} \right)\right)   (\alpha x^\delta)^{d}. 
\end{align*}
Similarly, the other horizontal integral can be treated. It remains to bound the integral $\int_{d-i \tau}^{d+i \tau} $. 
\begin{align*}
\int_{d-i \tau}^{d+i \tau} R_{k, \beta} (z) \frac{(\alpha x^\delta) ^{z} }{z(1-z+\delta z)}dz\ll_{\delta} \exp \left( d\left( \log c_k+ 4 \log \log d+4\log \frac{\kappa}{e} \right)\right)  (\alpha x^\delta)^{d} \left(1+\frac{1}{\tau}\right).
\end{align*}
Combining these estimates, we obtain
\begin{align}\label{ek}
I_{(b, \tau)}&=R_{k, \beta} \left(\frac{1}{(1-\delta)}\right)y^{\frac{1}{(1-\delta)}} +O_{\delta} \left(x\exp \left( d\left( \log c_k+ 4 \log \log d+4\log \frac{\kappa}{e} \right)\right)  (\alpha x^\delta)^{d}\right) \nonumber\\
&=R_{k, \beta} \left(\frac{1}{(1-\delta)}\right)y^{\frac{1}{(1-\delta)}} +O_{\delta} \left(x\exp \left( d\left( \log c_k+ 4 \log \log d+4\log \frac{\kappa}{e} +\log(\alpha x^\delta) \right)\right)  \right).
\end{align}
We will choose $d$ in a way such that $\left( \log c_k+ 4 \log \log d+4\log \frac{\kappa}{e} +\log(\alpha x^\delta) \right)$ becomes negative and less than equal to $-4$ which happens only when $d\leq \exp\left( (\kappa (c_k \alpha x^\delta)^{\frac{1}{4}})^{-1}\right)$.
We take $d= \exp\left( (\kappa (c_k \alpha x^\delta)^{\frac{1}{4}}    )^{-1}\right)$. Then the expression in \eqref{ek} reduces to
\begin{align}\label{ej}
I_{(b, \tau)}
&=R_{k, \beta} \left(\frac{1}{(1-\delta)}\right)y^{\frac{1}{(1-\delta)}} +O_{\delta} \left(x \exp \left( -4 \exp\left( \kappa (c_k \alpha x^\delta)^{\frac{1}{4}}\right)^{-1}\right)  \right).
\end{align}
The case for odd $k$ will be similar except in that case; we will use Lemma \ref{bound14} and choose $d= \exp\left( (\kappa ( \alpha x^\delta)^{\frac{1}{2}}    )^{-1}\right)$. The error term in that case will be $\ll_{\delta} x\exp \left( -2 \exp\left( \kappa ( \alpha x^\delta)^{\frac{1}{2}}\right)^{-1}\right)$.

For the case of $\delta \leq 0$, the choice of $d$ automatically satisfies the criterion $d>\frac{1}{(1-\delta)}$.
But in case of $0<\delta<1$, we have $d= \exp\left( ( \kappa (c_k \alpha x^\delta)^{\frac{1}{4}})^{-1} \right) \geq \frac{1}{(1-\delta)}+\varepsilon $ for any arbitarily small $\varepsilon > 0$ iff $y \leq  \frac{(\log ((1-\delta)^{-1}+\varepsilon))^{-4}}{c_k \kappa^4}x^{1-\delta}$.
\end{proof}
This leads finally to the case $0< \delta <1$ for $\frac{(\log ((1-\delta)^{-1}+\varepsilon))^{-4}}{c_k \kappa^4}x^{1-\delta} <y \leq x$, summarized below.
\begin{lemma}\label{Main000}
For $0< \delta <1$ with $ \frac{(\log ((1-\delta)^{-1}+\varepsilon))^{-4}}{c_k \kappa^4}x^{1-\delta} <y \leq x$, we have
\begin{align*}
\frac{x}{2\pi i}\int_{b-i \tau}^{b+i \tau}R_{k, \beta} (z) \frac{(\alpha x^\delta) ^{z} }{z(1-z+\delta z)}dz\ll_{\delta} x^{1-\varepsilon+\delta \varepsilon} y^{\varepsilon}    \exp\left( -\sqrt{f_k^{\prime}\log x\log\log x}\ \right),  
\end{align*}
for  $x\geq x_{{\beta}, k}$.
\end{lemma}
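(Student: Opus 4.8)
The plan is to adapt the contour‑shifting argument of Lemma~\ref{Main00}, but now keeping the line of integration to the \emph{left} of the pole at $z=\tfrac1{1-\delta}$, so that no residue is collected and the whole integral is absorbed into an error term. Throughout write $w:=\alpha x^{\delta}=yx^{\delta-1}$ and let $d_{\max}:=\exp\bigl((\kappa(c_kw)^{1/4})^{-1}\bigr)$ be the abscissa that appears in the proof of Lemma~\ref{Main00}. The hypothesis $y>\tfrac{(\log((1-\delta)^{-1}+\varepsilon))^{-4}}{c_k\kappa^{4}}x^{1-\delta}$ is precisely the assertion $d_{\max}<\tfrac1{1-\delta}+\varepsilon$ recorded at the close of the proof of Lemma~\ref{Main00}; in particular $d_{\max}$ is bounded by a constant depending only on $\delta$ and $k$ in the present range, which is what will make the $O_{\delta}(1)$ bounds below legitimate.

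First I would start, exactly as in Case~(1) of Lemma~\ref{Main0}, from the line $\Re z=b=\varepsilon$, and shift the contour to $\Re z=d$ with $d:=\min\bigl(d_{\max},\tfrac1{1-\delta}-\varepsilon\bigr)$, so that $\varepsilon<d<\tfrac1{1-\delta}$ and the closed rectangle with vertices $\varepsilon\pm i\tau$, $d\pm i\tau$ encloses no pole of $R_{k,\beta}(z)\dfrac{w^{z}}{z(1-(1-\delta)z)}$ (the pole at $z=\tfrac1{1-\delta}$ is missed, which is the whole point of this range, and the pole at $z=0$ stays to the left). By Cauchy's theorem,
\[
\frac{x}{2\pi i}\int_{\varepsilon-i\tau}^{\varepsilon+i\tau}R_{k,\beta}(z)\frac{w^{z}}{z(1-(1-\delta)z)}\,dz
=\frac{x}{2\pi i}\Bigl(\int_{d-i\tau}^{d+i\tau}+\int_{\varepsilon-i\tau}^{d-i\tau}+\int_{d+i\tau}^{\varepsilon+i\tau}\Bigr)R_{k,\beta}(z)\frac{w^{z}}{z(1-(1-\delta)z)}\,dz.
\]
On the two horizontal segments I would use $|z|\asymp\tau$ and $|1-(1-\delta)z|\gtrsim_{\delta}\tau$ (valid since $d$ is bounded away from $\tfrac1{1-\delta}$), which produces the factor $\tau^{-2}=\exp(-\sqrt{2\log x\log\log x})$; bound $|R_{k,\beta}(\sigma\pm i\tau)|$ by Lemma~\ref{bound13} (or, since $d$ is bounded here, by the $O_{\delta}(1)$ estimate for $R_{k,\beta}$ on $\{0<\Re z\le\lambda\}$ proved at the end of \S\ref{boundssec}); and use $w^{\sigma}\le\max(w^{\varepsilon},w^{d})$ for $\varepsilon\le\sigma\le d$, where by the defining inequality of $d_{\max}$ the quantity $\log c_k+4\log\log d+4\log\tfrac\kappa e+\log w$ is $\le-4$, so that $w^{d}\exp\bigl(d(\log c_k+4\log\log d+4\log\tfrac\kappa e)\bigr)\le e^{-4d}\le1$. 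This makes the two horizontal pieces $\ll_{\delta}x\,w^{\varepsilon}\tau^{-2}=x^{1-(1-\delta)\varepsilon}y^{\varepsilon}\exp(-\sqrt{f_k'\log x\log\log x})$ once subexponential factors are absorbed into $f_k'$. The vertical integral over $[d-i\tau,d+i\tau]$ is handled in the same spirit: $|R_{k,\beta}(d+i\eta)|$ via Lemma~\ref{bound13}, the $\eta$‑integral $\int_{-\tau}^{\tau}\frac{d\eta}{|d+i\eta||1-(1-\delta)(d+i\eta)|}\ll_{\delta}\log\tau$ (again $d$ is bounded away from $0$ and from $\tfrac1{1-\delta}$), and the product $w^{d}\exp\bigl(d(\log c_k+4\log\log d+4\log\tfrac\kappa e)\bigr)\le e^{-4d}$ from the choice of $d$; collecting the three estimates yields the stated bound.

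The hard part will be the choice of the new abscissa $d$ and the trade‑off it encodes: it must at once (i) keep the contour to the left of $z=\tfrac1{1-\delta}$ so that no spurious main term $R_{k,\beta}\!\bigl(\tfrac1{1-\delta}\bigr)y^{1/(1-\delta)}$ is picked up, (ii) satisfy $d\le d_{\max}$ so that the growth estimate of Lemma~\ref{bound13} combines with $w^{d}$ to produce genuine decay, and (iii) be pushed far enough from $\Re z=\varepsilon$ — by an amount of order $\sqrt{\log\log x/\log x}$ — that the horizontal segments realise the full $\tau^{-2}$ saving while the growth of $w^{\sigma}$ along the shift, $w^{\sigma}\le w^{d}$, remains controlled (this is exactly where the hypothesis $y>\tfrac{(\log((1-\delta)^{-1}+\varepsilon))^{-4}}{c_k\kappa^{4}}x^{1-\delta}$, forcing $d_{\max}$ below $\tfrac1{1-\delta}+\varepsilon$ and hence bounded, is used). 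Pinning down these constants — and verifying that the $O_{\delta}(1)$ bounds on $R_{k,\beta}$ and on the $\eta$‑integral are legitimate in this range — is essentially all the content; once $d$ is fixed, the three integral estimates are routine applications of Lemmas~\ref{bound12}–\ref{bound14} together with the zeta bounds \eqref{zetabounds}, exactly as in the proofs of Lemmas~\ref{M1} and \ref{Main00}.
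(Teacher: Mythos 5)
Your contour shift stays left of the pole, which is the right idea, but the bound on the central vertical segment at $\Re z = d$ fails to deliver the stated saving. After using Lemma~\ref{bound13} together with $w^d\exp\left(d(\log c_k+4\log\log d+4\log\frac{\kappa}{e})\right)\le e^{-4d}$, and noting that the $\eta$-integral $\int_{-\tau}^{\tau}\frac{d\eta}{|d+i\eta|\,|1-(1-\delta)(d+i\eta)|}$ is in fact $O_{\delta,\varepsilon}(1)$ rather than $O(\log\tau)$ (your choice keeps $1-(1-\delta)d\gg_{\delta,\varepsilon}1$, so the integrand is $\ll(1+|\eta|)^{-2}$), the vertical piece contributes $\ll x\,e^{-4d}$. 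But $d=\min\left(d_{\max},\frac{1}{1-\delta}-\varepsilon\right)\le\frac{1}{1-\delta}+\varepsilon$ is \emph{bounded} in the present range of $y$ --- that is exactly what the hypothesis on $y$ encodes --- so $e^{-4d}$ is a fixed positive constant and the vertical segment yields only $\ll_{\delta}x$, with no $\exp(-\sqrt{\cdot\log x\log\log x})$ factor. That is strictly weaker than the claimed bound $x\,w^\varepsilon\exp(-\sqrt{f_k'\log x\log\log x})$ whenever $w=\alpha x^\delta$ is bounded (which happens in part of the stated range), since then the claimed bound is $o(x)$. The $\tau^{-2}$ saving you exploit lives only on the horizontal pieces; on any vertical line at bounded abscissa the $\eta$-integral already converges, so truncating at $|\eta|=\tau$ gives no extra decay --- unlike in Lemma~\ref{Main00}, where $d_{\max}\to\infty$ and $e^{-4d_{\max}}$ supplies the saving. (A secondary issue: when $w>1$ the horizontal segments carry a factor $w^{d}$ rather than $w^{\varepsilon}$, which is also too large near $y=x$.)

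In fact the statement itself appears to be problematic as written, so the obstruction is not peculiar to your approach. For $0<\delta<1$ and $x^{1-\delta}\le y\le x$ (a nonempty subinterval of the hypothesis for $x$ large) one has $\phi(n)/n^\delta<n^{1-\delta}\le x^{1-\delta}\le y$ for every $1\le n\le x$, hence $\phi_{\Phi_k,\beta}(x,y)=\lfloor x\rfloor$; by Lemma~\ref{Main0}, whose error $E_1$ satisfies $|E_1|\ll x\,w^{\varepsilon}\exp(-\sqrt{f_k'\log x\log\log x})=o(x)$ when $w$ is bounded, this forces $\frac{x}{2\pi i}\int_{b-i\tau}^{b+i\tau}R_{k,\beta}(z)\frac{w^{z}}{z(1-(1-\delta)z)}\,dz=\lfloor x\rfloor-E_1\asymp x$, which exceeds the asserted upper bound. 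The paper states Lemma~\ref{Main000} without proof, and your inability to close the estimate on the vertical segment is a symptom of this gap in the lemma itself rather than a defect of your contour choice.
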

\section{Proofs of Main Theorems} \label{Mainthm}
\begin{proof}[Proof of Theorem \ref{thm3}]
When $0<\delta< 1$ and $0<\alpha<1$, we note that the term $E_1$ defined in Lemma \ref{Main0} will  dominate the error term $E_2$ defined in the previous Lemma \ref{Main00}. But for the main term in \eqref{Pse} to dominate $E_1$, we need the condition stated in Theorem \ref{thm3}. We get the desired result by combining Lemma \ref{Main0} and Lemma \ref{Main00}.
\end{proof}
Furthermore,

\begin{proof}[Proof of Theorem \ref{thm4}]
The expression in Lemma \ref{Main0} can be estimated as 
\begin{align*}
\phi_{\Phi_k, \beta}(x, y) \ll_{\delta} x^{1-(1-\delta) \varepsilon}  y^{\varepsilon}  \exp\left( -\sqrt{p_k^{\prime}\log x\log\log x}\ \right), 
\end{align*}
and hence the result.\end{proof}
\begin{proof}[Proof of Theorem \ref{thm5}]
The  integral in Lemma \ref{Main0} results into
\begin{align*}
\phi_{\Phi_k, \beta}(x, y) \ll_{\delta}x^{\frac{1}{2}+(1-\delta)\varepsilon} y^{\frac{1}{2(1-\delta)}-\varepsilon} \exp\left( -\sqrt{q_k\log x\log\log x}\ \right), 
\end{align*}
and hence the result.
\end{proof}
\begin{proof}[Proof of Theorem \ref{thm6}]
Combining Lemma \ref{Main0} and Lemma \ref{Main00}, we obtain
\begin{align}\label{eqthm6}
\phi_{\Phi_k, k-1}(x, y)&= R_{k, k-1} (1)y  +O(x  \exp\left( -\sqrt{f_k^{\prime }\log x\log\log x}\ \right).     
\end{align}
The proof is complete if we express the error term in \eqref{eqthm6} in terms of $y$, for which we follow the same reasoning as in \cite{D}.
\end{proof}
\begin{proof}[Proof of Corollary \ref{cor1}]
For $k=1$ and $\beta=0$ or $\delta=0$, the constant $R_{1,0}(1)$ can be equated to $\frac{\zeta(2)\zeta(3)}{\zeta(6)}$.
\end{proof}
\begin{proof}[Proof of Theorem  \ref{thm7}]
We first remark that there exists a $x^{\prime}_{\beta, k}$ such that $x \geq (1-\delta)$ for all $x \geq x^{\prime}_{\beta, k}$ and $\delta  <1$.

Note that for $\delta< 0$ and $\alpha\geq 1$, the term $E_1$ defined in Lemma \ref{Main0} dominates the error term $E_2$ defined in Lemma \ref{Main00}. 
Moreover, in this case, if $x (\log x \log\log x )^{\frac{1}{2}} < e^{1-\delta}y^{\frac{1}{1-\delta}-\frac{1}{\log x}} $, then the main term in Lemma \ref{Main00} is bigger than the error term $E_1$. Otherwise, we only obtain bounds for $\phi_{\Phi_k, \beta}(x, y)$.
Hence combining Lemma \ref{Main0} and Lemma \ref{Main00}, we obtain
\begin{align*}
\phi_{\Phi_k, \beta}(x, y)&= R_{(k, \beta)} \left(\frac{1}{(1-\delta)}\right)y^{\frac{1}{(1-\delta)}}   +O_\delta( x \alpha^{\frac{1}{\log x}}     (\log x \log\log x )^{\frac{1}{2}} ).     
\end{align*}
And in the other case, the integral in Lemma \ref{Main0} can be estimated as 
\begin{align*}
\frac{x}{2\pi i}\int_{(b, \tau)} R_{k, \beta}(z)\frac{(\alpha x^{\delta})^z }{z(1-(1-\delta)z)}\,dz \ll_{\delta} x \alpha^{\frac{1}{\log x}}     (\log x \log\log x )^{\frac{1}{2}}.
\end{align*}
and hence the result
\[\phi_{\Phi_k, \  \beta}(x, y)      \ll_{\delta} \min(x, 
      ( x \alpha^{\frac{1}{\log x}}     (\log x \log\log x )^{\frac{1}{2}} )) \ll_{\delta} x.  \] 

\end{proof}
\section{Maximal and Minimal order of $\Phi_k(n)$} \label{extremal}
This section is devoted to the proofs of Theorem \ref{kodd} and Theorem \ref{keven}.
\begin{proof}[Proof of Theorem  \ref{kodd}]
For $k$ odd, the proof follows directly from \cite{T}. 
If $k$ is even, we observe that $ \dfrac{\Phi_k(n)}{n^k}\leq1$.
Now, for a sequence $ \{p_j\}$ with $p_j$ primes and $p_j<p_{j+1}$ for $j\geq 1$, $\lim_{j\rightarrow\infty} \frac{\Phi_k(p_j)}{p^k_j}=1$.
Hence the result.
\end{proof}
\begin{proof}[Proof of Theorem  \ref{keven}]
When $k$ is odd, the proof follows directly from \cite{T}. For $k$ even, we split it into two cases:
\begin{description}
    \item [Case (1)]
Let $k\equiv 0\pmod{4}$. To evaluate the minimal order, we observe that
\begin{align*}
\Phi_k(n) \geq &    \left( \frac{2^{\frac{k}{2}}     }{2^{\frac{k}{2}}-1}\right) n^{k-1} \phi(n)\prod_{p \leq n} \left(1-\frac{1}{p^{\frac{k}{2}}}\right) \\
 \Phi_k(n) \geq & \left( \frac{2^{\frac{k}{2}}     }{2^{\frac{k}{2}}-1}\right)\left( \zeta\left(\frac{k}{2}\right)^{-1} +O\left(\frac{1}{n^{\frac{k}{2}-1}} \right)\right) \frac{e^{-\gamma} n^k}{\log\log n}.
\end{align*} 
In the last step, we used Proposition \ref{mertengeneral}.
Let us construct the sequence $\{n_s\}$ with $n_s=\prod_{1\leq i \leq s }p_i$ for $s\geq 1$. Then, 
\begin{align*}
\log n_s= \sum_{p \leq p_s}\log p=p_s\left(1+O\left(\frac{1}{ p_s}\right)\right).
\end{align*}
and  using this, we obtain
\begin{align*}
\Phi_k(n_s)&=  \left( \frac{2^{\frac{k}{2}}     }{2^{\frac{k}{2}}-1}\right)n_s^{k-1}\phi(n_s)\prod_{p \leq n_s} \left(1-\frac{1}{p^{\frac{k}{2}}}\right)\\
&= \left( \frac{2^{\frac{k}{2}}     }{2^{\frac{k}{2}}-1}\right)\left( \zeta\left(\frac{k}{2}\right)^{-1} +O\left(\frac{1}{n_s^{\frac{k}{2}-1}} \right)\right) \frac{e^{-\gamma} n_s^k}{\log\log n_s}.
\end{align*}
\item [Case (2)] If $k\equiv 2\pmod{4}$, we observe that
\begin{align*}
 \Phi_k(n)&\geq  n^{k} \prod_{p \leq n}\left(1-\frac{1}{p}\right) \left(1-\frac{\chi_1(p)}{p^{\frac{k}{2}}}\right)\\
&\geq   \begin{cases}
        \left(L\left(1, \chi_1\right)^{-1}+ O(\frac{1}{\log x})\right)\frac{e^{-\gamma} n^k}{\log\log n}& \quad \text{if } k=2,\\
        \left( L\left(\frac{k}{2}, \chi_1\right)^{-1} +O\left(\frac{1}{n^{\frac{k}{2}-1}} \right)\right) \frac{e^{-\gamma} n^k}{\log\log n} & \quad \text{if } k\neq 2.
\end{cases}
              \end{align*}
Here we have used Proposition \ref{mertengeneralap}.
Then, considering the sequence $\{n_t\}$ with $n_t=\prod_{1\leq i \leq t }p_i$
and proceeding as above, we obtain the required result.
\end{description}

\end{proof}

{\bf Conflict of Interest} The authors declare that they have no conflict of interest.

\end{document}